\documentclass[letterpaper]{amsart}
\usepackage{amsfonts,amssymb,amscd,amsmath,enumerate,url,verbatim}
\usepackage[all]{xy}

\theoremstyle{plain}
\newtheorem{theorem}{Theorem}[section]

\newtheorem{prop}[theorem]{Proposition}
\newtheorem{lemma}[theorem]{Lemma}
\newtheorem{corollary}[theorem]{Corollary}
\newtheorem{notation}[theorem]{Notation}
\newtheorem{convention}[theorem]{Convention}
\newtheorem{definition}[theorem]{Definition}
\newtheorem{question}[theorem]{Question}
\newtheorem{observation}[theorem]{Observation}
\newtheorem{example}[theorem]{Example}

\begin{document}
\title[On  the degrees of relations on $x_1^{d_1}, \ldots, x_n^{d_n}, (x_1+ \ldots + x_n)^{d_{n+1}}$ in positive characteristic]
{On the degrees of relations on $x_1^{d_1}, \ldots, x_n^{d_n},  (x_1+ \ldots + x_n)^{d_{n+1}}$ in positive characteristic}
\date{\today}

\author{Adela Vraciu}
\address{Adela Vraciu\\Department of Mathematics\\University of South Carolina\\\linebreak 
Columbia\\ SC 29208\\ U.S.A.} \email{vraciu@math.sc.edu}

\subjclass[2010]{13A35}
\keywords{ Frobenius homomorphism, F-threshold, Hilbert function, weak Lefschetz property}

\thanks{Research partly supported by NSF grant  DMS-1200085}
\maketitle
\begin{abstract}
We give a formula for the smallest degree of a non-Koszul relation on $x_1^{d_1}, \ldots, x_n^{d_n}, (x_1+\ldots +x_n)^{d_{n+1}}\in k[x_1, \ldots, x_n]$ (under certain assumptions on $d_1, \ldots, d_{n+1}$) where $k$ is a field of positive characteristic $p$. As an application of our result, we give a formula for the diagonal F-threshold of a diagonal hypersurface. Another application is a characterization, depending on the characteristic $p$ of $k$,  of the values of $d_1, \ldots, d_{n+1}$ (satisfying certain assumptions) such that the ring $k[x_1, \ldots, x_{n+1}]/(x_1^{d_1}, \ldots, x_{n+1}^{d_{n+1}})$ has the weak Lefschetz property.
\end{abstract}

\section{Introduction}
The goal of this paper is to answer the following question:
\begin{question}\label{Question}
Let $k$ be a field of characteristic $p\ge 0$ and let $d_1, \ldots, d_{n+1}$ be positive integers. What is the smallest degree of a non-Koszul relation on the elements $x_1^{d_1}, \ldots, x_n^{d_n}, (x_1+ \ldots + x_n)^{d_{n+1}}$ in the polynomial ring $k[x_1, \ldots, x_n]$? Equivalently, what is the smallest degree of a non-zero element in 
$$
\frac{(x_1^{d_1}, \ldots, x_n^{d_n}):(x_1+\ldots + x_n)^{d_{n+1}}}{(x_1^{d_1}, \ldots, x_n^{d_n})}?
$$
\end{question}
We will assume throughout that $d_1, \ldots, d_{n+1}$ are such that none of the elements $x_1^{d_1}, \ldots, x_n^{d_n}, (x_1+ \ldots + x_n)^{d_{n+1}}$ is in the ideal generated by the others. If $\mathrm{char}(k)=0$, this is equivalent to 
\begin{equation}\label{condition}
d_i \le \sum_{j\ne i} (d_j-1) \  \ \forall i \in \{1, \ldots, n+1\}.
\end{equation}
 The condition for $\mathrm{char}(k)=p>0$ is more complicated.
We introduce the following notation:
\begin{notation}
Let $f:=x_1 + \ldots + x_n$.

Let ${\mathcal A}=(a_1, \ldots, a_{n+1})^t$ be a homogeneous relation on $x_1^{d_1}, \ldots, x_n^{d_n}, f^{d_{n+1}}$,( i.e. $a_1x_1^{d_1} + \ldots + a_nx_n^{d_n}+a_{n+1}f^{d_{n+1}}=0$).
We define the degree of the relation ${\mathcal A}$ to be $\mathrm{deg}({\mathcal A}):=\mathrm{deg}(a_i)+d_i$ {\rm(}note that this does not depend on $i \in \{1, \ldots, n+1\}${\rm)}.

Let ${\mathcal E}_p(d_1, \ldots, d_{n+1})$ be $\mathrm{min}(\mathrm{deg}({\mathcal A}))$, where ${\mathcal A}$ is a non-Koszul relation, i.e. $a_i \notin (x_1^{d_1}, \ldots, x_{i-1}^{d_{i-1}}, x_{i+1}^{d_{i+1}}, \ldots, f^{d_{n+1}})$ for some (equivalently for all) $i \in \{1, \ldots, n+1\}$.
\end{notation}

All the relations considered in this paper will be homogeneous relations. Since the degrees of the relations are not affected by a flat base change, we may assume without loss of generality that $k$ is a perfect field.

\begin{observation}
Note that the function $\mathcal{E}_p(d_1, \ldots, d_{n+1})$ is symmetric in the variables $d_1, \ldots, d_{n+1}$. This is because one can do a change of variables $x_i=(x_1 + \ldots + x_n)-x_1 - \ldots -x_{i-1} - x_{i+1} - \ldots - x_n$ which allows one to switch $d_i$ and $d_{n+1}$.
\end{observation}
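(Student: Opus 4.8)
The plan is to realize every permutation of the index set $\{1,\ldots,n+1\}$ by a graded $k$-algebra automorphism of $R:=k[x_1,\ldots,x_n]$ that carries the sequence $x_1^{d_1},\ldots,x_n^{d_n},f^{d_{n+1}}$ (with $f=x_1+\cdots+x_n$) to the sequence obtained by permuting the exponents, up to reordering the generators and rescaling each of them by a unit of $k$. The one general fact I would use is that if $\phi$ is a graded automorphism of $R$ and $g_1,\ldots,g_{n+1}$ are homogeneous, then $\phi$ induces a degree-preserving bijection from the module of homogeneous relations on $(g_1,\ldots,g_{n+1})$ onto that on $(\phi(g_1),\ldots,\phi(g_{n+1}))$, sending the relation $(a_i)_i$ to $(\phi(a_i))_i$ and carrying Koszul relations to Koszul relations (hence non-Koszul to non-Koszul); reordering the $g_i$ or multiplying them by units of $k$ likewise gives such a bijection and changes none of the ideals involved. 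Consequently, producing such a $\phi$ for a given permutation forces the two corresponding values of $\mathcal{E}_p$ to agree, and since the standing hypothesis that no generator lies in the ideal of the others is itself permutation-symmetric, $\mathcal{E}_p$ is defined on both sides.

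For a transposition $(i\ j)$ with $i,j\le n$ the required automorphism is simply the variable swap $x_i\leftrightarrow x_j$: it fixes $f$ and permutes $x_1^{d_1},\ldots,x_n^{d_n}$, so after relabelling the generators it interchanges $d_i$ and $d_j$. This already yields symmetry of $\mathcal{E}_p$ in $d_1,\ldots,d_n$. For a transposition of the form $(i\ (n+1))$ with $i\le n$, I would instead use the linear map $\psi\colon R\to R$ that fixes $x_\ell$ for $\ell\ne i$ and sends $x_i\mapsto -f=-(x_1+\cdots+x_n)$. A direct computation gives $\psi(f)=\sum_{\ell\ne i}x_\ell-(x_1+\cdots+x_n)=-x_i$, whence $\psi^2=\mathrm{id}$ and $\psi$ is a graded automorphism. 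Applying $\psi$: the generators $x_\ell^{d_\ell}$ with $\ell\ne i$ are fixed, $x_i^{d_i}\mapsto(-1)^{d_i}f^{d_i}$, and $f^{d_{n+1}}\mapsto(-1)^{d_{n+1}}x_i^{d_{n+1}}$. Thus, up to units and reordering, $\psi$ carries our sequence to the one in which $d_i$ and $d_{n+1}$ have been interchanged, which is exactly what is needed.

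Since the transpositions $(i\ j)$ with $i,j\le n$ together with those of the form $(i\ (n+1))$ generate the symmetric group on $\{1,\ldots,n+1\}$, composing the associated automorphisms realizes an arbitrary permutation of $(d_1,\ldots,d_{n+1})$, so $\mathcal{E}_p$ is symmetric. I do not expect any genuine obstacle here; the only points requiring (mild) care are checking that $\psi$ really is an automorphism and keeping track of the sign units $(-1)^{d_i},(-1)^{d_{n+1}}$, but these are harmless, as they affect neither the ideals, nor the degrees, nor the Koszul property.
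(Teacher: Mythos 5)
Your argument is a careful formalization of exactly the idea the paper sketches: the automorphism $\psi$ fixing $x_\ell$ for $\ell\ne i$ and sending $x_i\mapsto -f$ (hence $f\mapsto -x_i$) is precisely the change of variables the observation alludes to, and your general remark about automorphisms preserving the degrees and the Koszul property of relations supplies the standard justification the paper leaves implicit. The approach is essentially the same as the paper's, and correct.
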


The answer to Question (\ref{Question}) is given in \cite{RRR} in the case when $\mathrm{char}(k)=0$.
\begin{theorem}[Theorem 5, \cite{RRR}]\label{ch0}
Let $d_1, \ldots, d_{n+1}$ be positive integers satisfying (\ref{condition}). Then
$$\mathcal{E}_0(d_1, \ldots, d_{n+1})=\left\lceil \frac{\sum_{i=1}^{n+1} d_i - n +1 }{2} \right \rceil.$$
\end{theorem}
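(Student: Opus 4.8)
\emph{Sketch of the intended proof.} The plan is to identify ${\mathcal E}_0(d_1,\ldots,d_{n+1})$ with $d_{n+1}+\mu$, where $\mu$ is the smallest degree $j$ in which multiplication by $f^{d_{n+1}}$ fails to be injective on the monomial complete intersection $A:=k[x_1,\ldots,x_n]/(x_1^{d_1},\ldots,x_n^{d_n})$, and then to extract $\mu$ from the strong Lefschetz structure of $A$ in characteristic $0$. For the first step: a homogeneous relation ${\mathcal A}=(a_1,\ldots,a_{n+1})^t$ is non-Koszul exactly when $a_{n+1}\notin I:=(x_1^{d_1},\ldots,x_n^{d_n})$, because subtracting Koszul syzygies of the shape $f^{d_{n+1}}e_i-x_i^{d_i}e_{n+1}$ clears the last coordinate of any relation with $a_{n+1}\in I$, leaving a relation on the regular sequence $x_1^{d_1},\ldots,x_n^{d_n}$, which is Koszul. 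As $a_{n+1}f^{d_{n+1}}\in I$ automatically, such relations of degree $\deg a_{n+1}+d_{n+1}$ correspond to the nonzero elements $a_{n+1}$ of $(I:f^{d_{n+1}})/I$, that is, to kernel vectors of $\times f^{d_{n+1}}\colon A_{\deg a_{n+1}}\to A_{\deg a_{n+1}+d_{n+1}}$.

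Now I would use the $\mathfrak{sl}_2$-structure: in characteristic $0$, $A=\bigotimes_{i=1}^n k[x_i]/(x_i^{d_i})$ is an $\mathfrak{sl}_2$-module whose raising operator $E$ is precisely $\times f$ (it acts on the tensor product diagonally, as $\sum_i(\times x_i)=\times f$, each factor being the irreducible $V_{d_i-1}$ of highest weight $d_i-1$), and whose grading operator acts on $A_j$ by $2j-s$, where $s=\sum_{i=1}^n(d_i-1)=\sum_{i=1}^n d_i-n$. By complete reducibility and iterated Clebsch--Gordan, $A\cong\bigoplus_N V_N^{\oplus m_N}$, with $m_N>0$ exactly for $N\equiv s\pmod 2$ and $N_0\le N\le s$, where $N_0:=\max\bigl(0,\,2\max_i(d_i-1)-s\bigr)$. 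Inspecting a single summand $V_N$ shows that $\times f^{d_{n+1}}$ has a nonzero kernel on $A_j$ if and only if $|2j-s|\le N<2(j+d_{n+1})-s$ for some $N$ with $m_N>0$. When $2j\le s-d_{n+1}$ this interval of admissible $N$ is empty, so there is no kernel; and at $j^{\ast}:=\lfloor(s-d_{n+1})/2\rfloor+1$ it collapses so that a kernel in degree $j^{\ast}$ exists precisely when $A$ contains $V_{d_{n+1}-1}$ (if $s-d_{n+1}$ is odd) or $V_{d_{n+1}}$ (if $s-d_{n+1}$ is even).

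This is where condition (\ref{condition}) enters, and showing that it is exactly the needed hypothesis is the heart of the argument. Its instance $d_{n+1}\le\sum_{i=1}^n(d_i-1)=s$ yields the bound $N\le s$, while its instance $d_{i^{\ast}}\le\sum_{j\ne i^{\ast}}(d_j-1)$ for an index $i^{\ast}\le n$ with $d_{i^{\ast}}$ maximal rearranges to $2\max_{i\le n}d_i\le s+d_{n+1}$, which is exactly the bound $N\ge N_0$ for the relevant $N\in\{d_{n+1}-1,d_{n+1}\}$. Hence $\mu=j^{\ast}$, and
\[
{\mathcal E}_0(d_1,\ldots,d_{n+1})=j^{\ast}+d_{n+1}=\Bigl\lfloor\frac{s+d_{n+1}}{2}\Bigr\rfloor+1=\left\lceil\frac{\sum_{i=1}^{n+1}d_i-n+1}{2}\right\rceil .
\]
The main obstacle is precisely this combinatorial core --- that no non-Koszul relation occurs below degree $j^{\ast}+d_{n+1}$ and that one occurs there. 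On the level of the Hilbert function $h_j=[t^j]\prod_{i=1}^n(1+t+\cdots+t^{d_i-1})$ this says $\times f^{d_{n+1}}$ first loses rank in degree $j^{\ast}$; against the symmetric but possibly "plateaued" $h$ this is delicate, and passing to the multiplicities $m_N$ is what makes the location of the first drop, and the role of (\ref{condition}), transparent.
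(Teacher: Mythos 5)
Your argument is correct (modulo small parity and edge-case bookkeeping) but follows Watanabe's representation-theoretic route \cite{W} rather than the algebraic proof of \cite{RRR} that the paper cites for Theorem~\ref{ch0} and itself adapts in the proof of Lemma~\ref{char0}. The first reduction --- that a non-Koszul relation of degree $D$ is the same as a nonzero kernel element of $\times f^{d_{n+1}}$ on $A=k[x_1,\ldots,x_n]/(x_1^{d_1},\ldots,x_n^{d_n})$ in degree $D-d_{n+1}$ --- is shared. The divergence is in locating the first degree where that kernel is nonzero. You read this off from the $\mathfrak{sl}_2$-decomposition $A\cong\bigoplus_N V_N^{\oplus m_N}$ obtained by iterated Clebsch--Gordan on $\bigotimes_i V_{d_i-1}$, with $\times f$ the raising operator; the interval test $|2j-s|\le N<2(j+d_{n+1})-s$ makes it transparent that no kernel can appear before $j^\ast$, and the two boundary instances of condition (\ref{condition}) (namely $d_{n+1}\le s$ and $2\max_{i\le n}(d_i-1)-s\le d_{n+1}-2$) become exactly the constraints ensuring the needed irreducible $V_{d_{n+1}-1}$ or $V_{d_{n+1}}$ actually occurs. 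Be aware that $N_0$ must be taken with the correct parity normalization and that $d_{n+1}\in\{1,2\}$ deserves a glance; since the support of $m_N$ is a single interval in a fixed residue class these details do not change the conclusion. The route the paper follows via \cite{RRR} (visible in the proof of Lemma~\ref{char0}) is instead an induction on $\deg g$ for a relation $gf^{d_{n+1}}\in(x_1^{d_1},\ldots,x_n^{d_n})$: one differentiates with respect to an $x_i$ and eliminates $g$, producing a non-Koszul relation on $x_1^{d_1},\ldots,x_i^{d_i-1},\ldots,x_n^{d_n},f^{d_{n+1}+1}$ of the \emph{same} total degree but with $\deg g$ one smaller, until $g$ is a constant and the claim reduces to a multinomial-coefficient observation. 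Your $\mathfrak{sl}_2$ argument explains condition (\ref{condition}) structurally, as a statement about which irreducibles occur, and pinpoints the Hilbert-function drop without delicate estimates. The paper's elementary differentiation argument has the decisive advantage that it carries over to characteristic $p$ as long as all exponents stay below $p$ --- which is exactly Lemma~\ref{char0}, the base case of the paper's main induction --- whereas the $\mathfrak{sl}_2$-module structure, Clebsch--Gordan, and even the identification of $k[x]/(x^d)$ with $V_{d-1}$ all break down in small characteristic, so your route could not supply what the paper actually needs downstream.
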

Smaller degrees are possible in positive characteristic. The following is a consequence of the methods used in \cite{RRR}.
\begin{theorem}\label{one_direction}
Let $d_1, \ldots, d_{n+1}$ be positive integers satisfying (\ref{condition}). Then
$$\mathcal{E}_p(d_1, \ldots, d_{n+1}) \le \mathcal{E}_0(d_1, \ldots, d_{n+1})$$
for every prime number $p>0$.
\end{theorem}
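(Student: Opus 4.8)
The plan is to recast the question homologically and then argue by reduction modulo $p$. For a field $k$, put $A_k := k[x_1,\ldots,x_n]/(x_1^{d_1},\ldots,x_n^{d_n})$ and let $M_k$ be the graded submodule of $A_k$ consisting of the elements killed by $F := (x_1+\ldots+x_n)^{d_{n+1}}$. Unwinding the definition of the degree of a relation (using the index $i=n+1$), a non-Koszul relation of degree $e$ corresponds exactly to a nonzero homogeneous element of degree $e-d_{n+1}$ in $M_k$; hence, for any field $k$ of characteristic $p$,
\[
\mathcal{E}_p(d_1,\ldots,d_{n+1}) = d_{n+1} + \min\{\deg(g) : 0\ne g\in M_k \text{ homogeneous}\},
\]
and this value is unchanged under flat base change. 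It therefore suffices to compare $k=\mathbb{Q}$ with $k=\mathbb{F}_p$, and the theorem reduces to the following assertion: writing $e_0 := \mathcal{E}_0(d_1,\ldots,d_{n+1})-d_{n+1}$, if $(M_\mathbb{Q})_{e_0}\ne 0$ then $(M_{\mathbb{F}_p})_{e_0}\ne 0$.

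To prove this I would work integrally. Let $A_\mathbb{Z} := \mathbb{Z}[x_1,\ldots,x_n]/(x_1^{d_1},\ldots,x_n^{d_n})$; in each degree it is a free $\mathbb{Z}$-module of finite rank (spanned by the monomials $x_1^{c_1}\cdots x_n^{c_n}$ with $0\le c_i<d_i$), and $A_\mathbb{Z}\otimes_\mathbb{Z} k = A_k$ compatibly with multiplication by $F$, which has integer coefficients. Write $\phi_e\colon (A_\mathbb{Z})_e\to (A_\mathbb{Z})_{e+d_{n+1}}$ for multiplication by $F$ in degree $e$ and set $K_e := \ker\phi_e$. As a submodule of a finitely generated free $\mathbb{Z}$-module, $K_e$ is free of finite rank, and $(A_\mathbb{Z})_e/K_e\cong \operatorname{im}\phi_e$ is torsion-free, hence free; thus $K_e$ is a direct summand of $(A_\mathbb{Z})_e$. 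Consequently, for every field $k$ the natural map $K_e\otimes_\mathbb{Z} k\to (A_\mathbb{Z})_e\otimes_\mathbb{Z} k = (A_k)_e$ is injective, and since $\phi_e$ kills $K_e$ its image lies in $\ker(\phi_e\otimes k) = (M_k)_e$. In particular $\dim_k (M_k)_e\ge \operatorname{rank}_\mathbb{Z} K_e$, and taking $k=\mathbb{Q}$ (which is flat over $\mathbb{Z}$) gives $\operatorname{rank}_\mathbb{Z} K_e = \dim_\mathbb{Q}(M_\mathbb{Q})_e$.

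Now I would conclude. By Theorem~\ref{ch0} the minimum defining $\mathcal{E}_0$ is attained, so there is a non-Koszul relation over $\mathbb{Q}$ of degree $\mathcal{E}_0(d_1,\ldots,d_{n+1})$, i.e. $(M_\mathbb{Q})_{e_0}\ne 0$; hence $\operatorname{rank}_\mathbb{Z} K_{e_0}\ge 1$. Being free of positive rank, $K_{e_0}\otimes_\mathbb{Z}\mathbb{F}_p\ne 0$, and by the previous paragraph it injects into $(M_{\mathbb{F}_p})_{e_0}$; thus $(M_{\mathbb{F}_p})_{e_0}\ne 0$. This exhibits a non-Koszul relation of degree $e_0+d_{n+1} = \mathcal{E}_0(d_1,\ldots,d_{n+1})$ over $\mathbb{F}_p$, hence over every field of characteristic $p$, and therefore $\mathcal{E}_p(d_1,\ldots,d_{n+1})\le \mathcal{E}_0(d_1,\ldots,d_{n+1})$.

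The only delicate point is that the characteristic-zero relation must survive reduction modulo $p$ without becoming zero or becoming Koszul; this is exactly what the direct-summand (equivalently, torsion-freeness) statement over $\mathbb{Z}$ guarantees, the point being that one is reducing a relation whose class in $M_\mathbb{Z}$ is part of a $\mathbb{Z}$-basis, so its reduction is automatically nonzero in $M_{\mathbb{F}_p}$. Alternatively one can reduce modulo $p$ the explicit minimal relation from \cite{RRR} after checking that its coefficients stay primitive, which is presumably the ``method of \cite{RRR}'' alluded to; the homological argument above has the advantage of not depending on the shape of that construction. Note also that this argument does not presuppose a non-degeneracy hypothesis in characteristic $p$: producing the relation shows a fortiori that $\mathcal{E}_p$ is finite.
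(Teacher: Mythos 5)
Your proof is correct, but it takes a genuinely different route from the paper. The paper's proof is a Hilbert-function dimension count: it invokes Theorem~1 of \cite{RRR}, which says that for $A=k[x_1,\ldots,x_{n+1}]/(x_1^{d_1},\ldots,x_{n+1}^{d_{n+1}})$ one has $\dim_k A_i > \dim_k A_{i+1}$ once $i\ge\lceil(\sum d_j-n-1)/2\rceil$ (a statement independent of $\mathrm{char}\,k$, since the Hilbert function of a monomial complete intersection is characteristic-free), so multiplication by the linear form $L$ cannot be injective in that range, forcing a non-Koszul relation of degree at most $\mathcal{E}_0$. It never constructs a relation, it only counts. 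You instead reduce a characteristic-zero relation modulo $p$: after reformulating $\mathcal{E}_p$ as $d_{n+1}$ plus the initial degree of $\ker(\times F)$ on $k[x_1,\ldots,x_n]/(x_1^{d_1},\ldots,x_n^{d_n})$, you work over $\mathbb{Z}$, observe that in each degree the cokernel of $K_e\hookrightarrow (A_\mathbb{Z})_e$ is torsion-free (being a submodule of a free module), hence $K_e$ is a direct summand, so $K_{e}\otimes\mathbb{F}_p$ injects into the kernel over $\mathbb{F}_p$. Combined with Theorem~\ref{ch0} giving $\mathrm{rank}\,K_{e_0}\ge 1$ at $e_0=\mathcal{E}_0-d_{n+1}$, this produces an explicit nonzero element, hence a non-Koszul relation, of degree $\mathcal{E}_0$. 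Both arguments ultimately rest on the characteristic-zero result of \cite{RRR}; the paper's is shorter, while yours is more structural and shows the extra fact that the minimal characteristic-zero relation can be chosen with primitive integer coefficients and then literally reduced mod~$p$ without degenerating. One small remark: the direct-summand machinery is slightly more than you need --- picking any nonzero $v\in K_{e_0}$ and dividing by the gcd of its coordinates already gives a primitive vector whose reduction mod $p$ is nonzero and still lies in $\ker(\times F)$.
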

\begin{proof}
Consider the ring $\displaystyle A=k[x_1, \ldots, x_{n+1}]/(x_1^{d_1}, \ldots, x_{n+1}^{d_{n+1}})$ where $k$ is a field of characteristic p. Let $L:=x_1 + \ldots x_{n+1}$ and consider the map $\times L : A_i \rightarrow A_{i+1}$ where $A_i$ denotes the $i$th graded piece of $A$. It is easy to see that $\times L : A_i \rightarrow A_{i+1}$ is not injective if and only if $\mathcal{E}_p(d_1, \ldots, d_{n+1}) \le i+1$. On the other hand, it is shown in \cite{RRR} (Theorem 1) that $\mathrm{dim}_k(A_i) > \mathrm{dim}_k(A_{i+1})$ when $i\ge \lceil \frac{\sum_{i=1}^{n+1} d_i - n-1}{2}\rceil$ which shows that the map $\times L: A_i \rightarrow A_{i+1}$ cannot be injective.
\end{proof}
The work in \cite{RRR}  is related to the weak Lefschetz property for monomial complete intersections $\displaystyle A=\frac{k[x_1, \ldots, x_{n+1}]}{(x_1^{d_1}, \ldots, x_{n+1}^{d_{n+1}})}$. If $\mathrm{char}(k)=0$, it was shown in \cite{St} using the hard Lefschetz theorem from algebraic geometry that all monomial complete intersections have the weak Lefschetz property. This was then reproved in \cite{W} using representation theory. The first purely algebraic prove is given in \cite{RRR} and it is a direct consequence of Theorem (\ref{ch0}).

In positive characteristic, relations of smaller degrees are possible, arising from the fact that $f^q = x_1^q + \ldots + x_n^q$ where $q=p^e$ is a power of the characteristic. This leads to frequent failure of the weak Lefschtetz property for monomial complete intersections in positive characteristic.  This is one of the applications of our work, discussed in Section 5.

Another application (see Section 4) is the computation of the diagonal F-threshold of a diagonal hypersurface $\displaystyle R=\frac{k[x_1, \ldots, x_{n+1}]}{(x_1^a+ \ldots + x_{n+1}^a)}$ in positive characteristic. F-thresholds were introduced in \cite{MTW} in connection with jumping numbers of test ideals for generalized tight closure (which are positive characteristic analogues of jumping numbers for multiplier ideals in birational geometry). They have been studied further in \cite{BMS1}, \cite{BMS2}, \cite{HMTW}; explicit formulas for F-thresholds of certain rings and certain ideals were obtained in \cite{Da}, \cite{Hernandez}. If $\frak{a}\subseteq \sqrt{J}$, then $\displaystyle c^J(\frak{a})=\lim_{q=p^e \rightarrow \infty} \frac{\mathrm{max}\{N \, | \, \frak{a}^N\not\subset J^{[q]} \}}{q}$ is the F-threshold of $\frak{a}$ with respect to $J$. The diagonal $F$-threshold of a ring is obtained when $\frak{a}=J=m$. Diagonal F-thresholds of certain rings were studied in \cite{KMY}, \cite{Jinjia}, \cite{CM}.

Question \ref{Question} has also been answered in the case when $n=2$ and $\mathrm{char}(k)=p>0$ in \cite{Han_thesis}. We mention that when $n=2$, knowing $\mathcal{E}_p(d_1, d_2, d_3)$ allows one to completely describe the free resolution of $(x_1^{d_1}, x_2^{d_2}):(x_1+x_2)^{d_3}$ over the polynomial ring, since it is given by a Hilbert-Burch matrix consisting of two relations with degrees adding up to $d_1+d_2+d_3$. However, if $n\ge 3$, knowing the smallest degree of a relation does not allow one to draw conclusions about the other relations on the given elements. The work in \cite{Han_thesis} has been done in connection with computing Hilbert-Kunz multiplicities for diagonal hypersurfaces, see also \cite{Han_Monsky}.

The following is our main result:
\begin{theorem}\label{main_theorem}
Let $k$ be a field of characteristic $p>0$, $n\ge 3$, $R=k[x_1, \ldots, x_n]$ and $f=x_1+ \ldots +x_n$. 

{\rm (I)}
Let $q=p^e$ be a power of $p$, $0 \le r_i \le q-1$, $1 \le k_i \le p-1$, and let $d_i = k_i q+ r_i$.

 Assume that
\begin{equation}\label{cond} k_i \le \lfloor \frac{\sum_{j=1}^{n+1} k_j -n+1}{2} \rfloor \ \mathrm{ for\  all}\  i \in \{1, \ldots, n+1\}.
\end{equation}
Then 
\begin{equation}\label{answer}
\mathcal{E}_p(d_1, \ldots , d_{n+1})=\mathrm{min}\{q\mathcal{E}_p(k_1+\epsilon_1, \ldots, k_{n+1}+\epsilon_{n+1})+\sum_{\epsilon_i=0}r_i\}
\end{equation}
where the minimum is taken over all $\epsilon_1, \ldots, \epsilon_{n+1}\in \{0, 1\}$.

{\rm (II)} Let  $1\le \kappa_1, \ldots, \kappa_{n+1}\le p$ such that 
\begin{equation} \kappa_i \le \lfloor \frac{\sum_{j=1}^{n+1} \kappa_j -n+1}{2} \rfloor \ \mathrm{ for\  all}\  i \in \{1, \ldots, n+1\}.
\end{equation}. Then 
\begin{equation}\label{ch0answer}
\mathcal{E}_p(\kappa_1, \ldots, \kappa_{n+1}) = \mathrm{min}\{ \left \lceil \frac{\sum_{i=1}^{n+1} \kappa_i - n +1 }{2} \right\rceil, p\}
\end{equation}
\end{theorem}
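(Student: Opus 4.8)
The plan is to treat the two parts together, using part (I) as the engine and part (II) as a base case that is separately argued. The overall strategy for (I) is to establish a two-sided inequality: an upper bound for $\mathcal{E}_p(d_1,\ldots,d_{n+1})$ coming from an explicit construction of non-Koszul relations, and a matching lower bound coming from a reduction-mod-$p$ argument that pushes any low-degree relation down to a relation on the ``truncated'' exponents $k_i + \epsilon_i$. For the upper bound, I would start from a non-Koszul relation $\mathcal{B}=(b_1,\ldots,b_{n+1})^t$ on $x_1^{k_1+\epsilon_1},\ldots,x_n^{k_n+\epsilon_n},f^{k_{n+1}+\epsilon_{n+1}}$ of minimal degree $\mathcal{E}_p(k_1+\epsilon_1,\ldots,k_{n+1}+\epsilon_{n+1})$, apply the $e$-th Frobenius to get a relation on $x_1^{q(k_1+\epsilon_1)},\ldots,f^{q(k_{n+1}+\epsilon_{n+1})}$, and then correct each term: write $x_i^{d_i} = x_i^{qk_i+r_i}$ and observe that $x_i^{q(k_i+\epsilon_i)} = x_i^{d_i}\cdot x_i^{q\epsilon_i - r_i}$ when $\epsilon_i = 1$, while when $\epsilon_i = 0$ we have $x_i^{qk_i} = x_i^{d_i - r_i}$, so multiplying the $i$-th component of $\mathcal{B}^{[q]}$ by the appropriate monomial (and, for $f^{d_{n+1}}$, using $f^{q} = x_1^q + \cdots + x_n^q$ plus the condition $k_{n+1}\le p-1$ to write $f^{q(k_{n+1}+\epsilon_{n+1})}$ in terms of $f^{d_{n+1}}$) converts it into a relation on the original elements. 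The degree bookkeeping produces exactly $q\mathcal{E}_p(k_1+\epsilon_1,\ldots,k_{n+1}+\epsilon_{n+1}) + \sum_{\epsilon_i=0} r_i$, and one checks the resulting relation is still non-Koszul; taking the minimum over all $\epsilon$ gives ``$\le$''.

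For the lower bound I would take a non-Koszul relation $\mathcal{A}$ on $x_1^{d_1},\ldots,f^{d_{n+1}}$ of degree $\mathcal{E}_p(d_1,\ldots,d_{n+1})$ and analyze it modulo the Frobenius powers. The key is to look at the socle/colon module $(x_1^{d_1},\ldots,x_n^{d_n}):f^{d_{n+1}}$ and use the description $f^{d_{n+1}} = f^{qk_{n+1}+r_{n+1}} = (f^q)^{k_{n+1}} f^{r_{n+1}} = (x_1^q+\cdots+x_n^q)^{k_{n+1}} f^{r_{n+1}}$, together with an analysis, à la \cite{RRR} and the methods behind Theorem \ref{one_direction}, of how a monomial of small degree can lie in such a colon ideal. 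Concretely, I expect to decompose the exponent of any monomial appearing in $a_{n+1}$ into its ``$q$-part'' and ``$r$-part'' in each variable, use the freshman's dream to separate the contribution of the $(x_i^q)$'s from that of the low powers, and thereby extract a genuine non-Koszul relation on $x_1^{k_1+\epsilon_1},\ldots,f^{k_{n+1}+\epsilon_{n+1}}$ for a suitable choice of $\epsilon_i \in \{0,1\}$ (the $\epsilon_i$ recording whether the relation ``borrows'' an extra power of $x_i$ across the $q$-boundary). Running the degree count backwards then gives $\mathcal{E}_p(d_1,\ldots,d_{n+1}) \ge q\mathcal{E}_p(k_1+\epsilon_1,\ldots,k_{n+1}+\epsilon_{n+1}) + \sum_{\epsilon_i=0} r_i$ for that $\epsilon$, hence ``$\ge$'' after taking the min. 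Hypothesis (\ref{cond}) is what guarantees the truncated tuples $(k_1+\epsilon_1,\ldots,k_{n+1}+\epsilon_{n+1})$ still satisfy the admissibility condition (\ref{condition}) and that no degenerate Koszul collapse occurs, so that $\mathcal{E}_p$ is defined and finite for all the tuples appearing.

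For part (II), the tuples $(\kappa_1,\ldots,\kappa_{n+1})$ with each $\kappa_i \le p$ are precisely the base case of the recursion in (I) (taking $q=p$, $k_i$ and $r_i$ absorbed appropriately, or simply arguing directly). Here I would show that either the characteristic-zero relation of degree $\lceil (\sum \kappa_i - n+1)/2 \rceil$ from Theorem \ref{ch0} already works — its construction in \cite{RRR} is characteristic-free as long as no denominator divisible by $p$ appears, which, I claim, is controlled by the size of that degree relative to $p$ — or else, when $\sum\kappa_i - n + 1$ is large enough that $\lceil(\sum\kappa_i-n+1)/2\rceil \ge p$, one uses the Frobenius relation coming from $f^p = x_1^p + \cdots + x_n^p$ directly: since some $\kappa_i = p$ is allowed, $f^p - x_1^p - \cdots - x_n^p = 0$ is itself a non-Koszul relation of degree $p$ (when the $\kappa_i$ permit it, i.e. are close to $p$), and a short argument shows no relation of degree $< p$ can exist in that regime because below degree $p$ the ring behaves as in characteristic zero. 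Combining, $\mathcal{E}_p(\kappa_1,\ldots,\kappa_{n+1}) = \min\{\lceil(\sum\kappa_i-n+1)/2\rceil, p\}$.

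I expect the main obstacle to be the lower bound in part (I): showing that \emph{every} low-degree non-Koszul relation on the original elements descends to one on a truncated tuple, with exactly the claimed degree drop and no better. This requires a careful ``digit-by-digit'' analysis of monomials in base $q$ across all $n$ variables simultaneously, keeping track of carries, and verifying that the $\epsilon_i$ one is forced to choose are consistent across all $n+1$ components of the relation; the bookkeeping is delicate precisely because, unlike the $n=2$ Hilbert–Burch situation, there is no rigidity forcing the shape of the relation, so one must argue purely from degree and membership constraints. The symmetry observation (switching any $d_i$ with $d_{n+1}$) should be used repeatedly to reduce the number of cases.
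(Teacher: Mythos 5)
Your upper bound sketch is essentially the paper's Theorem~\ref{inequality}: raise a minimal non-Koszul relation on the truncated exponents $k_i + \epsilon_i$ to the $q$-th power and multiply by $\prod_{\epsilon_i=0} x_i^{r_i}$ (no special handling of $f$ is needed once one adopts the convention $f = x_{n+1}$; the condition $k_{n+1} \le p-1$ plays no role in the upper bound, only in the lower bound). Your Part (II) sketch also has the right flavor, but ``a short argument shows no relation of degree $<p$ can exist'' is precisely what needs to be proved; the paper proves it (Lemma~\ref{char0}) by a derivative-and-integration induction on $\deg g$, which is not a denominator-counting argument and is not obvious.

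The genuine gap is in your lower bound for Part~(I), where you write ``running the degree count backwards then gives'' the inequality. That step is exactly where the content lies and it does not follow from the decomposition alone. When you express the restriction of $\mathcal{A}$ to $x_1^{k_1q},\ldots,f^{k_{n+1}q}$ via flatness of Frobenius as $\sum_{M,l} g_{M,l} M (a_{1,M,l}^q,\ldots,a_{n+1,M,l}^q)^t$ (the paper's Lemma~\ref{Lem}), the requirement that the $(n+1)$-th coordinate be divisible by $f^{r_{n+1}}$ does \emph{not} directly force a single summand to give a relation of the required degree: contributions from different $M$'s, and from Koszul relations, can cancel. The paper's main body handles this with a delicate argument showing that for a suitably chosen minimal-$I$ term, either a key monomial $\nu$ already lands in the right ideal, or else the offending cancellation can be absorbed into Koszul terms, contradicting a minimality choice in how the sum was written. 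Moreover, the residual case $M_1 = x_1^{r_1}$ requires a separate estimate, Lemma~\ref{remainders}, which is proved by induction on $e$ (this is where the overall induction on $q = p^e$ lives, a structural feature your sketch omits), and which is the single place where $n \ge 3$ is essential — the paper points this out explicitly, and it is exactly what separates the answer here from Han's $n=2$ formula. Without this cancellation machinery and the inductive estimate on remainders, the descent argument you propose does not close.
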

\begin{observation}

{\bf 1.}  The inequality ($\le $) in equation (\ref{answer}) is always true provided that $k_i\ge 1$ for all $i$ (by Theorem (\ref{inequality})). However, the inequality ($\ge $) may fail when $k_1, \ldots, k_{n+1}$ do not satisfy the assumption (\ref{cond}).

{\bf 2.}  As noted in Theorem (\ref{one_direction}),  $\mathcal{E}_p(d_1, \ldots, d_{n+1}) \le \mathcal{E}_0(d_1, \ldots, d_{n+1})$. We note that equality holds if and only if the ring $\displaystyle A=\frac{k[x_1, \ldots, x_{n+1}]}{(x_1^{d_1}, \ldots, x_{n+1}^{d_{n+1}})}$ has the weak Lefschetz property {\rm (}see Corollary 2.2 in \cite{KV}{\rm )}.
\end{observation}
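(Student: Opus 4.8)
The three claims in the Observation are repackagings of results available elsewhere in the paper or in \cite{KV}, together with one explicit example; here is how I would assemble each. \emph{Item 1, first sentence:} the plan is to quote Theorem~(\ref{inequality}). Its proof is by lifting: take a non-Koszul relation ${\mathcal B}=(b_1,\ldots,b_{n+1})$ on $x_1^{k_1+\epsilon_1},\ldots,x_n^{k_n+\epsilon_n},f^{k_{n+1}+\epsilon_{n+1}}$ of degree ${\mathcal E}_p(k_1+\epsilon_1,\ldots,k_{n+1}+\epsilon_{n+1})$, raise $\sum_i b_ix_i^{k_i+\epsilon_i}+b_{n+1}f^{k_{n+1}+\epsilon_{n+1}}=0$ to the $q$-th power, and use $f^q=x_1^q+\ldots+x_n^q$ to get a relation on $x_1^{q(k_1+\epsilon_1)},\ldots,f^{q(k_{n+1}+\epsilon_{n+1})}$ with coefficients $b_i^q$. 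Since $qk_i=d_i-r_i\le d_i$ when $\epsilon_i=0$ and $q(k_i+1)=d_i+(q-r_i)>d_i$ when $\epsilon_i=1$, multiplying through by $M:=\bigl(\prod_{i\le n,\ \epsilon_i=0}x_i^{r_i}\bigr)\cdot f^{r_{n+1}}$ (omit the last factor if $\epsilon_{n+1}=1$) and absorbing the leftover powers of $x_i$, resp.\ $f$, into the coefficients produces a relation ${\mathcal A}$ on $x_1^{d_1},\ldots,x_n^{d_n},f^{d_{n+1}}$ of degree $q\,{\mathcal E}_p(k_1+\epsilon_1,\ldots)+\deg M=q\,{\mathcal E}_p(k_1+\epsilon_1,\ldots)+\sum_{\epsilon_i=0}r_i$. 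The one nontrivial point is that ${\mathcal A}$ is still non-Koszul, and this is exactly where $k_i\ge 1$ enters, through a computation of the colon ideal $(x_1^{d_1},\ldots,\widehat{x_i^{d_i}},\ldots,f^{d_{n+1}}):x_i^{d_i}$ together with flatness of the Frobenius over $k[x_1,\ldots,x_n]$. Taking the minimum over all $\epsilon\in\{0,1\}^{n+1}$ gives ``$\le$'' in (\ref{answer}).

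\emph{Item 1, second sentence:} here the plan is to produce an explicit example. One chooses $n=3$, a prime $p$, and $d_i=k_iq+r_i$ so that (i) the $d_i$ satisfy (\ref{condition}) — hence ${\mathcal E}_p(d_1,\ldots,d_4)$ is defined and the bound ``$\le$'' of (\ref{answer}) holds — while (ii) some $k_i$ violates (\ref{cond}), i.e.\ $k_i$ lies in the window $\sum_{j\ne i}k_j-(n-1)<k_i$ that is still compatible with (\ref{condition}) after scaling by $q$. In this regime the descent underlying Part~(I) of Theorem~(\ref{main_theorem}) breaks down, and a direct computation of
$$
\frac{(x_1^{d_1},x_2^{d_2},x_3^{d_3}):(x_1+x_2+x_3)^{d_4}}{(x_1^{d_1},x_2^{d_2},x_3^{d_3})}
$$
exhibits a non-Koszul relation whose degree is strictly below the right-hand side of (\ref{answer}) — the latter being forced to use a shift $\epsilon$ for which some $k_i+\epsilon_i$ is large and hence ${\mathcal E}_p$ of the shifted tuple is comparatively big. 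I expect the actual obstacle in this step to be purely bookkeeping: pinning down a tuple for which \emph{every} term appearing on the right-hand side of (\ref{answer}) is simultaneously defined (so that ``$\ge$'' is a genuine inequality between integers) and small enough to be checked by hand or by a short computer-algebra computation.

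\emph{Item 2:} the plan is to move everything to the multiplication map $\times L\colon A_i\to A_{i+1}$ with $L=x_1+\ldots+x_{n+1}$ on $A=k[x_1,\ldots,x_{n+1}]/(x_1^{d_1},\ldots,x_{n+1}^{d_{n+1}})$. By the argument recalled in the proof of Theorem~(\ref{one_direction}), $\times L$ fails to be injective on $A_i$ precisely when ${\mathcal E}_p(d_1,\ldots,d_{n+1})\le i+1$; and, as noted there following \cite{RRR}, the Hilbert function of $A$ (which is characteristic-independent) is symmetric and unimodal with socle degree $s=\sum d_i-n-1$, so that ${\mathcal E}_0(d_1,\ldots,d_{n+1})=\lceil s/2\rceil+1$. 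Since ${\mathcal E}_p\le{\mathcal E}_0$ always, it follows that ${\mathcal E}_p={\mathcal E}_0$ if and only if $\times L$ is injective on $A_i$ for every $i<\lceil s/2\rceil$, i.e.\ on every graded piece below the peak. As $A$ is a complete intersection, hence Gorenstein, the map $\times L\colon A_i\to A_{i+1}$ is $k$-dual to $\times L\colon A_{s-i-1}\to A_{s-i}$, so injectivity below the peak is equivalent to surjectivity above it; therefore ${\mathcal E}_p={\mathcal E}_0$ if and only if $\times L$ has maximal rank in every degree, that is, $L$ is a weak Lefschetz element of $A$. Finally, for a monomial complete intersection the form $x_1+\ldots+x_{n+1}$ is a weak Lefschetz element exactly when $A$ has the weak Lefschetz property; combining this with the chain of equivalences above is precisely Corollary~2.2 of \cite{KV}, which I would cite for the conclusion.
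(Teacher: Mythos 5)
Your handling of the first sentence of item 1 and of item 2 is essentially the paper's: the upper bound in (\ref{answer}) is exactly Theorem (\ref{inequality}) (and your sketch of its proof is the same lifting construction — raise a minimal non-Koszul relation to the $q$th power and multiply by $\prod_{\epsilon_i=0}x_i^{r_i}$, the only delicate point being non-Koszulness of the result), and item 2 is, as in the paper, delegated to Corollary 2.2 of \cite{KV}; your Gorenstein-duality unpacking of that corollary (injectivity of $\times L$ below the middle degree dual to surjectivity above it, combined with symmetry and unimodality of the Hilbert function and the identity $\mathcal{E}_0=\lceil s/2\rceil+1$) is correct and consistent with the citation.

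The genuine gap is in the second sentence of item 1. The assertion that the inequality ($\ge$) in (\ref{answer}) \emph{may fail} when (\ref{cond}) is violated is an existential claim, and you never exhibit a witness: you describe the shape a counterexample should have and then defer its construction to ``bookkeeping'' and an unspecified computer-algebra check, explicitly acknowledging that the tuple has not been pinned down or verified. Until a specific tuple is produced and both sides of (\ref{answer}) are actually evaluated for it, this part of the Observation is unproved. The paper supplies precisely such a witness in the example immediately following the Observation: for $p=5$, $n=3$, $(d_1,\ldots,d_4)=(7,7,7,18)$ one has $(k_1,\ldots,k_4)=(1,1,1,3)$ and $(r_1,\ldots,r_4)=(2,2,2,3)$, so (\ref{cond}) fails because $k_4=3>\lfloor(\sum_j k_j-n+1)/2\rfloor=2$; the right-hand side of (\ref{answer}) evaluates to $20$ (both $5\cdot 3+r_1+r_4$ and $5\cdot 4$ equal $20$), whereas a Macaulay2 computation gives $\mathcal{E}_5(7,7,7,18)=19<20$. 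To complete your argument you should either reproduce this example (or another explicit one) together with the verification of the degree-$19$ non-Koszul relation, or cite such a concrete computation; as written, this step is a plan rather than a proof.
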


The following example illustrates how the result of Theorem~\ref{main_theorem} is applied in practice.
\begin{example}
\rm{We wish to calculate} $\mathcal{E}_5(6, 7, 11, 12)$. We have $k_1=k_2=1$, $k_3=k_4=2$, $r_1=r_3=1$, $r_2=r_4=2$. 
We have
$$
\mathcal{E}_p(k_1 +\epsilon_1, k_2+\epsilon_2, k_3+\epsilon_3, k_4+\epsilon_4) =\left \lceil \frac{\sum_{i=1}^4 k_i + \sum_{i=1}^4 \epsilon_i -2}{2}\right\rceil= 
 \left\lbrace \begin{array}{ccc} 2 & \mathrm{when} & \sum_{i=1}^4 \epsilon_i=0 \\ 3 & \mathrm{when} & \sum_{i=1}^4 \epsilon_i\in \{1, 2\} \\ 4 & \mathrm{when} & \sum_{i=1}^4 \epsilon_i \in \{3, 4\} \\ \end{array} \right. 
$$
We have
$$
p\mathcal{E}_p(k_1, k_2, k_3, k_4) + \sum_{i=1}^4r_i = 5\cdot 2 + 6 =16
$$
$$
\mathrm{min} \{ p\mathcal{E}_p(k_1+\epsilon_1, k_2+\epsilon_2, k_3+\epsilon_3, k_4+\epsilon_4) + \sum_{\epsilon_i=0} r_i\} = 5 \cdot 3 + 2 = 17
$$
where the minimum is over all the choices of $\epsilon_i \in \{0, 1\}$ with $\sum_{i=1}^4 \epsilon_i \in \{1, 2\}$(achieved for $\epsilon_1=\epsilon_3=0, \epsilon_2, \epsilon_4=1$) and 
$$
\mathrm{min} \{ p\mathcal{E}_p(k_1+\epsilon_1, k_2+\epsilon_2, k_3+\epsilon_3, k_4+\epsilon_4) + \sum_{\epsilon_i=0} r_i\}= 5\cdot 4 =20
$$
where the minimum is taken over the choices of $\epsilon_i \in \{0, 1\}$ with $\sum_{i=1}^4 \epsilon_i \in \{3, 4\}$ (achieved for $\epsilon_1=\epsilon_2 = \epsilon_3 = \epsilon_4=0$).

Therefore we have  $\mathcal{E}_5(6, 7, 11, 12)=16$.
\end{example}

The next example shows that the assumption (\ref{cond}) cannot be removed.
\begin{example}
{\rm We wish to calculate} $\mathcal{E}_5(7, 7, 7, 18)$. {\rm We have} $k_1=k_2=k_3=1$, $k_4=3$, $r_1=r_2=r_3=2, r_4=3$. {\rm It is easy to see that} $\mathcal{E}_5(1+\epsilon_1, 1+\epsilon_2, 1+\epsilon_3, 3)=3$ {\rm when} $\epsilon_1+\epsilon_2+\epsilon_3=2$ {\rm (since} $f^3 \in (x_1^{1+\epsilon_1}, x_2^{1+\epsilon_2}, x_3^{1+\epsilon_3}{\rm )}${\rm )}, $\mathcal{E}_5(2, 2, 2, 3)=\mathcal{E}_5(1+\epsilon_1, 1+\epsilon_2, 1+\epsilon_3, 4)=4$ {\rm for all choices of} $\epsilon_1, \epsilon_2, \epsilon_3 \in \{0, 1\}$. {\rm Therefore the  minimum that equation (\ref{answer}) yields is} $\mathrm{min}\{15+r_1+r_4, 20\}=20$. {\rm However, a Macaulay2 calculation shows that}$\mathcal{E}_5(7, 7, 7, 18)=19$.
\end{example}

Moreover, we observe that the result of Theorem~\ref{main_theorem} can only be applied to exponents $d_1, \ldots, d_{n+1}$ that have the property that the largest power of $p$ that is less than or equal to $d_i$ is the same for all $i$.

\section{Proof of the first inequality and the connection with the case $n=2$}

\begin{convention}
For convenience of notation, in the course of the proof we will refer to $f=x_1+\ldots + x_n$ by the name $x_{n+1}$. We warn the reader that this is not a new variable. 
\end{convention}
Note that every subset with $n$ elements of the set $\{x_1,\ldots, x_{n+1}\}$ is a system of parameters in $R$. Also note that one can replace $x_i$ by $x_{n+1}-x_1 -\ldots -x_{i-1}-x_{i+1} - \ldots- x_{n}$, and thus one can use any subset of $n$ elements out of $\{x_1, \ldots, x_{n+1}\}$ as variables in the polynomial ring.

\begin{theorem}\label{inequality}
Let $d_i=k_iq+r_i$ where $q=p^e, k_i \ge 1$, $0 \le r_i \le q-1$.
Then 
$$
\mathcal{E}_p(d_1, \ldots, d_{n+1}) \le q\mathcal{E}_p(k_1+\epsilon_1, \ldots, k_{n+1}+\epsilon_{n+1}) + \sum_{\epsilon_i =0} r_i
$$
for every $(\epsilon_1, \ldots, \epsilon_{n+1}) \in \{0, 1\}^{n+1}$. 
\end{theorem}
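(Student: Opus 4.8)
The plan is to start with a non-Koszul relation witnessing $\mathcal{E}_p(k_1+\epsilon_1,\ldots,k_{n+1}+\epsilon_{n+1})$ and Frobenius it up, then multiply by the appropriate left-over powers $x_i^{r_i}$ to correct the exponents. Concretely, fix $(\epsilon_1,\ldots,\epsilon_{n+1})\in\{0,1\}^{n+1}$ and let $\mathcal{B}=(b_1,\ldots,b_{n+1})^t$ be a non-Koszul homogeneous relation on $x_1^{k_1+\epsilon_1},\ldots,x_n^{k_n+\epsilon_n},f^{k_{n+1}+\epsilon_{n+1}}$ of degree $\mathcal{E}_p(k_1+\epsilon_1,\ldots,k_{n+1}+\epsilon_{n+1})$, so $\sum_i b_i x_i^{k_i+\epsilon_i}=0$ (using the convention $x_{n+1}=f$). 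Apply the $q$-th power Frobenius: since $k$ is perfect we may raise coefficients to the $q$-th power, and because $f^q=x_1^q+\ldots+x_n^q$ the element $(f^q)^{k_{n+1}+\epsilon_{n+1}}=(f^{k_{n+1}+\epsilon_{n+1}})^q$ is still a power of $f$. This gives a relation $\sum_i b_i^{(q)}\, x_i^{q(k_i+\epsilon_i)}=0$, where $b_i^{(q)}$ denotes the polynomial obtained by applying the ring Frobenius (raising variables and coefficients to the $q$-th power). The exponent on $x_i$ is $qk_i+q\epsilon_i$, whereas we want $d_i=qk_i+r_i$. When $\epsilon_i=1$ we have $q\epsilon_i=q\ge r_i$, so we can write $x_i^{q(k_i+1)}=x_i^{d_i}\cdot x_i^{q-r_i}$; when $\epsilon_i=0$ we have $x_i^{qk_i}=x_i^{d_i}\cdot x_i^{-r_i}$, which is the wrong direction, so instead we multiply the whole relation through by $\prod_{\epsilon_j=0}x_j^{r_j}$.

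So set $c_i := b_i^{(q)}\cdot x_i^{q-r_i}\cdot\prod_{\epsilon_j=0,\ j\ne i}x_j^{r_j}$ when $\epsilon_i=1$, and $c_i := b_i^{(q)}\cdot\prod_{\epsilon_j=0,\ j\ne i}x_j^{r_j}$ when $\epsilon_i=0$; in both cases one checks $c_i x_i^{d_i} = b_i^{(q)} x_i^{q(k_i+\epsilon_i)}\cdot\prod_{\epsilon_j=0}x_j^{r_j}$, so $\sum_i c_i x_i^{d_i} = \big(\prod_{\epsilon_j=0}x_j^{r_j}\big)\sum_i b_i^{(q)}x_i^{q(k_i+\epsilon_i)}=0$, i.e. $\mathcal{C}=(c_1,\ldots,c_{n+1})^t$ is a relation on $x_1^{d_1},\ldots,x_n^{d_n},f^{d_{n+1}}$. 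Its degree is $\deg(c_i)+d_i$; taking an index $i$ with $\epsilon_i=0$ (such $i$ exists since otherwise all $\epsilon_i=1$ and the bound is even easier — or one computes directly) one gets $\deg(\mathcal{C}) = q\,\deg(b_i) + \sum_{\epsilon_j=0,\,j\ne i}r_j + d_i = q\,\deg(b_i)+qk_i + \sum_{\epsilon_j=0}r_j = q\,\mathcal{E}_p(k_1+\epsilon_1,\ldots,k_{n+1}+\epsilon_{n+1}) + \sum_{\epsilon_j=0}r_j$, which is the claimed bound. (One should double-check the degree count is the same if computed at an index with $\epsilon_i=1$, using $\deg(\mathcal{B})=\deg(b_i)+k_i+\epsilon_i$; this is the "independent of $i$" remark in the Notation.)

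The main obstacle is verifying that $\mathcal{C}$ is \emph{non-Koszul}, i.e. that some $c_i\notin(x_1^{d_1},\ldots,\widehat{x_i^{d_i}},\ldots,f^{d_{n+1}})$. This is where perfectness of $k$ and the structure of the construction matter: the extra monomial factors $x_i^{q-r_i}$ and $\prod_{\epsilon_j=0}x_j^{r_j}$ have all their exponents strictly less than the corresponding $d_j$ (indeed $q-r_i\le q\le qk_i<d_i$ after checking $k_i\ge 1$, and $r_j<q\le d_j$), so multiplication by them is injective modulo the ideal $(x_1^{d_1},\ldots,\widehat{x_i^{d_i}},\ldots)$ on the relevant graded piece — more precisely, if $c_i$ lay in that ideal one would deduce, after dividing out the monomial factors (legitimate because the monomials involved are not zerodivisors and their exponents stay below the relevant $d_j$), that $b_i^{(q)}\in(x_1^{q(k_1+\epsilon_1)},\ldots,\widehat{x_i^{q(k_i+\epsilon_i)}},\ldots)=\mathfrak{a}^{[q]}$ where $\mathfrak{a}=(x_1^{k_1+\epsilon_1},\ldots,\widehat{x_i^{k_i+\epsilon_i}},\ldots)$; and since $k$ is perfect, $b_i^{(q)}\in\mathfrak{a}^{[q]}$ forces $b_i\in\mathfrak{a}$ (Frobenius is flat, or directly: a $k$-basis of $R/\mathfrak{a}$ maps under Frobenius to a basis of $R/\mathfrak{a}^{[q]}$ over $k$ perfect), contradicting that $\mathcal{B}$ was non-Koszul. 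I would isolate this "dividing out monomials and descending through Frobenius" argument as a small lemma, being careful that the monomial cancellations respect the ambient ideal — that bookkeeping, rather than any deep idea, is the delicate part.
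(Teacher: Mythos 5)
Your construction is the same one the paper uses: lift a minimal non-Koszul relation in the small exponents via Frobenius, then multiply through by $\prod_{\epsilon_j=0}x_j^{r_j}$ so the exponents match $d_i=k_iq+r_i$. Your degree bookkeeping is correct. But the verification that the resulting relation $\mathcal{C}$ is non-Koszul has two real problems, one small and one fatal.

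The small one: after dividing out the monomial factors you do \emph{not} land directly in $(x_1^{q(k_1+\epsilon_1)},\ldots,\widehat{\;\cdot\;},\ldots)=\mathfrak{a}^{[q]}$. When $\epsilon_j=1$ there is nothing to divide off on the $x_j$-side, so what you actually get is $b_i^{(q)}\in(\ldots,x_j^{k_jq+r_j},\ldots)$, and $k_jq+r_j<(k_j+1)q$. The passage from exponent $k_jq+r_j$ to exponent $(k_j+1)q$ is a separate step that uses the fact that every exponent occurring in the $q$-th power $b_i^{(q)}$ is a multiple of $q$ (so a monomial of $b_i^{(q)}$ divisible by $x_j^{k_jq+r_j}$ with $r_j>0$ is automatically divisible by $x_j^{(k_j+1)q}$). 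You gesture at something like this with ``descending through Frobenius,'' but you are asserting the wrong intermediate ideal; the paper spells out the correct one and the $q$-th--power argument, and also disposes of the $r_j=0,\ \epsilon_j=1$ edge case at the start (by noting such $\epsilon$ never give the minimum, so one may assume $r_j=0\Rightarrow\epsilon_j=0$). This is easily repaired.

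The fatal one: you discard the case $\epsilon_1=\cdots=\epsilon_{n+1}=1$ as ``even easier --- or one computes directly.'' It is neither. In that case the coefficient to test is $c_{n+1}=b_{n+1}^{(q)}\,f^{q-r_{n+1}}$, and $f^{q-r_{n+1}}$ is \emph{not} a monomial, so the ``divide out monomial factors and regular-sequence'' argument you relied on simply does not apply. The paper's handling of this case is the longest and most delicate part of the proof: it expands both $a_{n+1}^q$ and $f^{q-r_{n+1}}$ into monomials, shows there is no cancellation (by looking at residues of exponents mod $q$), takes colons by individual monomials $x_i^{q-r_{n+1}}$, intersects the resulting ideals (producing the extra generator $\prod x_i^{k_i}$), and finally reaches a contradiction by comparing $\mathrm{deg}(a_{n+1})\ge\sum k_i$ with the upper bound $\mathcal{E}_p(k_1+1,\ldots,k_{n+1}+1)\le\mathcal{E}_0(k_1+1,\ldots,k_{n+1}+1)$ from Theorem~\ref{one_direction}. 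None of this is ``a direct computation,'' and your proposal does not supply an alternative. As written, the all-$\epsilon_i=1$ case is a genuine gap.
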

\begin{proof}
We will construct a non-Koszul relation of degree $\mathcal{E}_p(k_1+\epsilon_1, \ldots, k_{n+1}+\epsilon_{n+1})q+\sum_{\epsilon_i = 0} r_i$ for each choice of $\epsilon_1, \ldots, \epsilon_{n+1}\in \{0, 1\}$ such that if $r_j=0$ then $\epsilon_j=0$. It is enough to consider these choices because when $r_j=0$, the choice of $\epsilon_j=1$ yields a larger value of  $\mathcal{E}_p(k_1+\epsilon_1, \ldots, k_{n+1}+\epsilon_{n+1})+\sum_{\epsilon_i = 0} r_i$ than the choice of $\epsilon_j=0$.

Start with a non-Koszul relation of minimal degree $\mathcal{A}=(a_1, \ldots, a_{n+1})^t$ on $x_1^{k_1+\epsilon_1}, \ldots, x_n^{k_n + \epsilon_n}, f^{k_{n+1} +\epsilon_{n+1}}$, raise it to the $q$th power, and multiply by $x_i^{r_i}$ for each $i$ for which $\epsilon_i=0$. We obtain a relation on $x_1^{d_1}, \ldots, x_n^{d_n}, f^{d_{n+1}}$, in which the coefficient of $x_i^{d_i}$ is $a_i^q(\prod_{\epsilon_j=0, j\ne i}x_j^{r_j})$ if $\epsilon_i=0$, and it is $a_i^qx_i^{q-r_i}(\prod_{\epsilon_j=0, j\ne i}x_j^{r_j})$ if $\epsilon_i=1$. We need to show that this relation is not Koszul. If there exists an $i$ with $\epsilon_i=0$, fix such an $i$. It is enough to prove that the coefficient of $x_i^{d_i}$ is not in $(x_1^{d_1}, \ldots, x_{i-1}^{d_{i-1}}, x_{i+1}^{d_{i+1}}, \ldots, x_{n+1}^{d_{n+1}})$. 
Otherwise, we would have 
$$a_i^q(\prod_{\epsilon_j=0, j\ne i} x_j^{r_j}) \in (x_1^{k_1q+r_1}, \ldots, x_{i-1}^{k_{i-1}q+r_{i-1}}, x_{i+1}^{k_{i+1}q+r_{i+1}}, \ldots, x_{n+1}^{k_{n+1}q+r_{n+1}}).
$$
Since $x_1, \ldots, x_{i-1}, x_{i+1}, \ldots, x_{n+1}$ form a regular sequence, this implies $$a_i^q \in (x_1^{k_1q+s_1}, \ldots, x_{i-1}^{k_{i-1}q+s_{i-1}}, x_{i+1}^{k_{i+1}q+s_{i+1}}, \ldots, x_{n+1}^{k_{n+1}q+s_{n+1}})$$ where $s_j$ is equal to $r_j$ if $\epsilon_j=1$ and it is zero otherwise. Since $a_i^q$ is a $q$th power, every monomial in $a_i^q$ (where we use $x_1, \ldots, x_{i=1}, x_{i+1}, \ldots, x_{n+1}$ as variables in $k[x_1, \ldots, x_n]$) which is divisible by $x_j^{k_jp+r_j}$ will also be divisible by $x_j^{(k_j+1)q}$ if $r_j>0$. Therefore it follows that
$$a_i^q\in (x_1^{(k_1+\epsilon_1)q}, \ldots, x_{i-1}^{(k_{i-1}+\epsilon_{i-1})q}, x_{i+1}^{(k_{i+1}+\epsilon_{i+1})q}, \ldots, x_{n+1}^{(k_{n+1}+\epsilon_{n+1})q}).$$ But this implies that $$a_i \in (x_1^{k_1+\epsilon_1}, \ldots, x_{i-1}^{k_{i-1}+\epsilon_{i-1}}, x_{i+1}^{k_{i+1}+\epsilon_{i+1}}, \ldots, x_{n+1}^{k_{n+1}+\epsilon_{n+1}}),$$ which contradicts the assumption that the relation we started with was not Koszul.

Now consider the case when $\epsilon_i=1$ for all $i$. Note that this case need only be considered when $r_i>0$ for every $i$. We want to show that $a_{n+1}^qx_{n+1}^{q-r_{n+1}}\notin (x_1^{k_1q+r_1}, \ldots, x_n^{k_nq+r_n})$. Write $a_{n+1}^q=\sum_u{\alpha_u M_u}$ and $x_{n+1}^{q-r_{n+1}}=\sum_v{\beta_v N_v}$ as sums of monomials (in the variables $x_1, \ldots, x_n$) $M_u, N_v$, with coefficients $\alpha_u, \beta_v \in k\, \backslash \, \{0\}$. Note that every exponent in every $M_u$ is divisible by $q$, while every exponent in every $N_v$ is strictly less than $q$. It follows that there can be no cancellations between monomials $M_uN_v$ and $M_{u'}N_{v'}$ that occur when we multiply out $(\sum_u M_u)(\sum_v N_v)$, since no two monoials can have the same exponents for each variable unless $N_v=N_v'$ and $M_u=M_{u'}$ (we can see this by looking at the congruence classes modulo $q$ of the exponents, which are given by the exponents that occur in $N_v, N_{v'}$).

 If we assume by way of contradiction that $a_{n+1}^qx_{n+1}^{q-r_{n+1}}\in (x_1^{k_1q+r_1}, \ldots, x_n^{k_nq+r_n})$, it follows that $M_uN_v \in (x_1^{k_1q+r_1}, \ldots, x_n^{k_nq+r_n})$ for every $u, v$. In particular one can take $N_v=x_i^{q-r_{n+1}}$, and it follows that $$M_u\in (x_1^{k_1q+r_1}, \ldots, x_n^{k_nq+r_n}):x_i^{q-r_{n+1}}=
$$
$$(x_1^{k_1q+r_1}, \ldots, x_{i-1}^{k_{i-1}q+r_i}, x_i^{k_iq-q+r_i+r_{n+1}}, x_{i+1}^{k_{i+1}q+r_{i+1}}, \ldots, x_n^{k_nq+r_n}.$$
Since $M_u$ is a $q$th power and $r_i>0$, it follows that $$M_u \in (x_1^{k_1q+q}, \ldots, x_{i-1}^{k_{i-1}q+q}, x_i^{k_iq}, x_{i+1}^{k_{i+1}q}, \ldots, x_n^{k_nq+q}).$$ But this holds for every choice of $i$ and every monomial $M_u$ in the expansion of $a_{n+1}^q$. Therefore we must have 
$a_{n+1}\in \bigcap_i (x_1^{k_1+1}, \ldots, x_i^{k_i}, \ldots, x_n^{k_n +1})= (x_1^{k_1+1}, \ldots, x_n^{k_n+1}, \prod _{i=1}^n x_i^{k_i})$. 
Since the original relation $\mathcal{A}=(a_1, \ldots, a_{n+1})^t$ is not Koszul, it follows that $\mathrm{deg}(a_{n+1})\ge \sum_{i=1}^n k_i$ and thus $\mathrm{deg}(\mathcal{A})=\mathrm{deg}(a_{n+1})+k_{n+1}+1 \ge \sum_{i=1}^{n+1} k_i +1$. This contradicts the assumption that the original relation had minimal degree $\mathcal{E}_p(k_1+1, \ldots, k_{n+1}+1)$, since by Theorem (\ref{one_direction}) we have
$$\mathcal{E}_p(k_1+1, \ldots, k_{n+1}+1)\le \mathrm{max}\left\lbrace k_i+1 , \lceil \frac{\sum_{i=1}^{n+1} (k_i+1) - n +1 }{2} \rceil \right\rbrace  = 
$$
$$\mathrm{max}\left\lbrace k_i+1 , \lceil \frac{\sum_{i=1}^{n+1} k_i +2 }{2} \rceil \right\rbrace < \sum_{i=1}^{n+1} k_i +1
$$

\end{proof}

We now discuss briefly the case $n=2$. The main result in this case is Theorem 2.25 in \cite{Han_thesis}.  We do not reprove Han's result here, but rather we translate it in a more algebraic language (the original statement relies heavily on a honeycomb structure in the plane consisting of octahedra and tetrahedrons) that will allow us to compare it with our results for $n\ge 3$.

\begin{theorem}[Theorem 2.25, \cite{Han_thesis}]\label{Han}
Let $d_1, d_2, d_3$ be positive integers satisfying the triangle inequality, and let $k$ be a field of characteristic $p>0$.
Then
$$\mathcal{E}_p(d_1, d_2, d_3)=\mathrm{min}\{q \lceil \frac{\sum_{i=1}^3 (k_i + \epsilon_i)-1}{2}\rceil + \sum_{\epsilon_i=0} r_i\}
$$
where $d_i = k_i q+r_i$, $k_i\ge 1$, $0\le r_i \le q-1$, and the minimum is taken over all $q=p^e$ and all $(\epsilon_1, \epsilon_2, \epsilon_3)\in \{0, 1\}^3$.
\end{theorem}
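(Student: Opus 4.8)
For positive integers $d_1,d_2,d_3$ satisfying the triangle inequality,
$$\mathcal{E}_p(d_1,d_2,d_3)=\min\Big\{q\big\lceil\tfrac{\sum_{i=1}^3(k_i+\epsilon_i)-1}{2}\big\rceil+\sum_{\epsilon_i=0}r_i\Big\},$$
the minimum over all $q=p^e$ and all $(\epsilon_1,\epsilon_2,\epsilon_3)\in\{0,1\}^3$, where $d_i=k_iq+r_i$ with $k_i\ge 1$ and $0\le r_i\le q-1$.

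Since the paper says it merely \emph{translates} Han's honeycomb statement into algebraic language rather than reproving it, a proof proposal here should supply the dictionary and verify that the two formulations agree; I would not re-derive Han's combinatorics from scratch.

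\medskip

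\textbf{Plan.} The $(\le)$ direction is essentially already available. For a fixed $q=p^e$, writing $d_i=k_iq+r_i$, Theorem~\ref{inequality} gives $\mathcal{E}_p(d_1,d_2,d_3)\le q\,\mathcal{E}_p(k_1+\epsilon_1,k_2+\epsilon_2,k_3+\epsilon_3)+\sum_{\epsilon_i=0}r_i$ for every choice of $\epsilon_i\in\{0,1\}$ (whenever each $k_i\ge 1$; and if some $k_i=0$, i.e. $d_i<q$, one simply uses a smaller $q$, so it suffices to range over all $q$). By Theorem~\ref{one_direction}, $\mathcal{E}_p(k_1+\epsilon_1,k_2+\epsilon_2,k_3+\epsilon_3)\le\mathcal{E}_0(k_1+\epsilon_1,k_2+\epsilon_2,k_3+\epsilon_3)=\lceil\frac{\sum(k_i+\epsilon_i)-1}{2}\rceil$ by Theorem~\ref{ch0} (here $n=2$, so the ``$-n+1$'' is ``$-1$'', and the triangle inequality on $(d_1,d_2,d_3)$ forces condition~(\ref{condition}) on the reduced exponents for the relevant $\epsilon$). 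Taking the minimum over $q$ and over $\epsilon$ yields the $(\le)$ half. The first step of the write-up is therefore to record this chain of inequalities carefully, paying attention to the edge cases where some $k_i=0$ or where the triangle inequality degenerates.

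\textbf{The main work: the $(\ge)$ direction and the honeycomb dictionary.} This is where Han's theorem does the real work, and the task is to show that the right-hand side above is also a lower bound for $\mathcal{E}_p(d_1,d_2,d_3)$. I would proceed by citing Han's structural result and making the translation explicit: Han parametrizes the socle-type generators of $(x_1^{d_1},x_2^{d_2}):(x_1+x_2)^{d_3}$ modulo $(x_1^{d_1},x_2^{d_2})$ via a honeycomb tiling of a triangle of side $d_1+d_2+d_3$ by octahedra and tetrahedra, where the tetrahedra are indexed by the carries in the base-$p$ additions of the $d_i$. A minimal non-Koszul relation corresponds to a lattice point of minimal ``height'' lying in a tetrahedral region, and the base-$p$ digit analysis of such a point produces exactly the $q=p^e$ and the $\epsilon$-pattern (with $\epsilon_i=1$ recording a carry out of the $e$-th digit position of $d_i$) for which $q\lceil\frac{\sum(k_i+\epsilon_i)-1}{2}\rceil+\sum_{\epsilon_i=0}r_i$ is attained. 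Concretely I would: (i) state Han's parametrization in the form needed; (ii) identify, for each level $e$, the ``truncation'' of the configuration to its top $n$ base-$p$ digits with the monomial-complete-intersection picture for exponents $k_i+\epsilon_i$, using that $f^q=x_1^q+x_2^q$ so that raising/truncating at a $p$-power is compatible with the relation structure (this is the algebraic content already exploited in Theorem~\ref{inequality}'s proof); and (iii) deduce that the minimal height over all tetrahedral points equals the stated minimum. Alternatively, and perhaps cleaner for a purely algebraic reader, one can prove $(\ge)$ by induction on $e$ directly: given any non-Koszul relation $\mathcal{A}$, look at its restriction modulo the ideal generated by the $x_i^q$-parts versus the ``low-degree'' part below $q$, peel off the factor $\prod_{r_i\ne 0}x_i^{r_i}$ or $x_i^{q-r_i}$ according to whether a carry occurs, and reduce to a relation on exponents $\lceil d_i/q\rceil$ or $\lfloor d_i/q\rfloor$; the bookkeeping of which exponent gets which correction term is exactly the $\epsilon_i$ dichotomy.

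\textbf{Expected main obstacle.} The hard part is the $(\ge)$ inequality, specifically making the reduction ``strip off one base-$p$ digit'' rigorous: one must argue that a minimal non-Koszul relation cannot hide below the $q$-th power barrier in a way that beats every $(q,\epsilon)$ estimate, and the carry/no-carry case split (the $\epsilon_i$) has to be tracked simultaneously in all three coordinates subject to the triangle inequality being preserved at each level. Han handles this via the honeycomb geometry; re-expressing it algebraically without loss is the delicate point, and I would expect most of the proof's length to be consumed by verifying that the truncation at level $q$ of a minimal relation is again a \emph{non-Koszul} relation on the reduced exponents (so that one may invoke $\mathcal{E}_p$ at the smaller scale and iterate down to the base case where all $k_i\le p$, handled by part (II) of Theorem~\ref{main_theorem}).
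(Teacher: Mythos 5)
Your treatment of the $(\le)$ direction matches the paper exactly: combine Theorem~\ref{inequality} with Theorem~\ref{one_direction}/\ref{ch0} for each admissible $q$ and $(\epsilon_1,\epsilon_2,\epsilon_3)$, then minimize. That part is fine.

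For the $(\ge)$ direction, however, what you have written is an itinerary rather than a proof. You correctly identify that one should cite Han's Theorem~2.25 and supply a dictionary, but you never actually build the dictionary, and this is precisely where all the content of the paper's argument lives. The paper's proof takes Han's formula literally: $\mathcal{E}_p(d_1,d_2,d_3)=\tfrac{d_1+d_2+d_3}{2}-\tfrac12 d^*\bigl((d_1,d_2,d_3),\,qF\bigr)$, where $q$ is the \emph{largest} power of $p$ for which $(d_1/q,d_2/q,d_3/q)$ lies in an octahedron, $F$ is the explicit union of planes $\pm x_1\pm x_2\pm x_3=\text{even integer}$, and $d^*$ is taxi-cab distance. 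It then observes that the octahedral center is necessarily $(k_1+\epsilon_1,k_2+\epsilon_2,k_3+\epsilon_3)$ for a unique $(\epsilon_1,\epsilon_2,\epsilon_3)\in\{0,1\}^3$, and for each of the four cases $\sum\epsilon_i\in\{0,1,2,3\}$ it \emph{exhibits an explicit point} $(z_1,z_2,z_3)=(k_1+c_1,k_2+c_2,k_3+c_3)\in F$ whose taxi-cab distance to $(d_1,d_2,d_3)/q$ realizes the required bound~(\ref{want}). This exhibition of witnesses — choosing real $c_i$ with $c_1+c_2+c_3=1$ when $\sum\epsilon_i=0$ and $r_1+r_2+r_3<q$, etc. — is the entire translation, and you do not produce it. Your step (ii), speaking of ``truncation'' of a relation at level $q$ and compatibility with Frobenius, is actually a description of the paper's separate \emph{algebraic} machinery for $n\ge 3$ (Lemma~\ref{Lem} and the subsequent induction), not of what happens in the $n=2$ proof, which is deliberately kept geometric and offloaded to Han.

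Your alternative suggestion — an inductive strip-one-digit argument proving $(\ge)$ directly — is a genuinely different route, and it is exactly what the paper declines to pursue for $n=2$ (the text explicitly says ``We do not reprove Han's result here''). If you wanted to go that way you would have to confront the issue, highlighted by the paper's Lemma~\ref{remainders} discussion, that the $n=2$ case behaves differently from $n\ge 3$: the optimal $q$ in Han's theorem can be any power of $p$ (not just the largest power below $\min d_i$), so the digit-stripping recursion does not terminate after one step the way it does for $n\ge 3$. As written, your proposal gives no indication of how to handle this, so the alternative approach is at best speculative. The cleanest fix is to carry out the case analysis on $\sum\epsilon_i$ and actually produce the four witness points in $F$.
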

Note that if the minimum is achieved for $q=p^0=1$, then $\mathcal{E}_p(d_1, d_2, d_3)=\mathcal{E}_0(d_1, d_2, d_3)$. Unlike the case $n\ge 3$, in the case $n=2$ the minimum can be achieved for any power of $p$, not just the largest which is less than $d_1, d_2, d_3$. Also note that, unlike Theorem~\ref{main_theorem}), the statement of Theorem~\ref{Han} can be applied to arbitrary values $d_1, d_2, d_3$ (since here it is not required that  $k_i <p$).
\begin{proof}
We will only show how our statement follows from the statement of Theorem 2.25 in \cite{Han_thesis}.
The inequality ($\le $) follows from Theorem (\ref{inequality}) and Theorem (\ref{ch0}). We only show ($\ge$).
According to \cite{Han_thesis}, let $q=p^m$ be the largest power of $p$ such that $\displaystyle (\frac{d_1}{q}, \frac{d_2}{q}, \frac{d_3}{q})$ is octahedral  (in the terminology of \cite{Han_thesis}, a point in the plane is octahedral if it belongs to an open unit ball with the center at $(x_1, x_2, x_3)$, where $x_1, x_2, x_3$ are non-negative integers such that $x_1+x_2+x_3$ is odd; the distance used in this definition is the taxi-cab distance, $d^*(P, Q)=\sum_{i=1}^3 |P_i -Q_i|$).
Theorem 2.25 in \cite{Han_thesis} asserts that 
$$
\mathcal{E}_p(d_1, d_2, d_3)=\frac{d_1+d_2+d_3}{2}-\frac{1}{2}d^*((d_1, d_2, d_3), qF)
$$
where $F$ is the union of the planes of equation $a_1x_1+a_2x_2+a_3x_3=b$ with $a_1, a_2, a_3 \in \{1, -1\}$, and $b=$ even integer.
Note that we must have $(x_1, x_2, x_3)=(k_1+\epsilon_1, k_2+\epsilon_2, k_3+\epsilon_3)$ for some choice of $(\epsilon_1, \epsilon_2, \epsilon_3) \in \{0, 1\}^3$.
We want to find a point $(z_1, z_2, z_3) \in F$ such that
\begin{equation}\label{want}
\sum_{i=1}^3 |qz_i - d_i| \le \sum_{i=1}^3 d_i -2 q\lceil \frac{\sum_{i=1}^3 (k_i + \epsilon_i)-1}{2}\rceil - 2(\sum_{\epsilon_i=0}r_i).
\end{equation}
This will show that $q \lceil \frac{\sum_{i=1}^3 (k_i + \epsilon_i)-1}{2}\rceil + \sum_{\epsilon_i =0} r_i \le \mathcal{E}_p(d_1, d_2, d_3)$ for this choice of $\epsilon_1, \epsilon_2, \epsilon_3$, and the conclusion will follow.

We will discuss four cases, according to the possible values for $\sum_{i=1}^3 \epsilon_i \in \{0, 1, 2, 3\}$.

If $\sum_{i=1}^3 \epsilon_i=0$, then in order for $\displaystyle (\frac{d_1}{q},\frac{d_2}{q}, \frac{d_3}{q})$ to belong to the open unit ball with center at $(k_1, k_2, k_3)$, we must have $r_1 + r_2 + r_3 < q$, and $k_1+ k_2+k_3$ must be odd.  In this case, the right hand side of equation (\ref{want}) is $q-(r_1+r_2+r_3)$. Since $r_1+r_2+r_3<q$, we can choose $c_1, c_2, c_3 >0$ real numbers such that $c_1+c_2+c_3=1$ and $qc_i \ge  r_i$ for all $i \in \{1, 2, 3\}$. Then we have $(z_1, z_2, z_3):=(k_1+c_1, k_2+c_2, k_3+c_3) \in F$ (satisfying the equation $z_1+z_2+z_3=k_1+k_2+k_3+1=$ an even integer) and $\sum_{i=1}^3 |qz_i -d_i|=\sum_{i=1}^3 (qc_i -r_i)=q-(r_1+r_2+r_3)$ and thus equation (\ref{want}) holds.

If $\sum_{i=1}^3 \epsilon_i=1$ we may assume with no loss of generality that $\epsilon_1=1$ and $\epsilon_2=\epsilon_3=0$. Then $k_1+k_2+k_3$ must be even, and in order for $\displaystyle (\frac{d_1}{q},\frac{ d_2}{q}, \frac{d_3}{q})$ to belong to the open unit ball with center at $(k_1+1, k_2, k_3)$ we must have $(q-r_1)+r_2+r_3<q$, i.e. $r_2+r_3<r_1$.
The right hand side of equation (\ref{want}) is $r_1-(r_2+r_3)$. We choose $c_1, c_2, c_3>0$ real numbers such that $c_1=c_2+c_3$, $c_1q\le r_1$,  and $c_iq\ge r_i$ for each $i \in \{2, 3\}$. We let $(z_1, z_2, z_3):=(k_1+c_1, k_2+c_2, k_3+c_3)\in F$ (satisfying the equation $z_1-z_2-z_3=k_1-k_2-k_3=$ an even integer), and we see that
$\sum_{i=1}^3 |qz_i-d_i|= (r_1-c_1q)+(c_2q-r_2)+(c_3q-r_3)=r_1-r_2-r_3$ and therefore equation (\ref{want}) holds.

If $\sum_{i=1}^3 \epsilon_i=2$, we may assume with no loss of generality that $\epsilon_1=\epsilon_2=1$ and $\epsilon_3=0$. Then $k_1+k_2+k_3$ must be odd, and in order for $\displaystyle (\frac{d_1}{q}, \frac{d_2}{q}, \frac{d_3}{q})$ to be in the open unit ball with center at $(k_1+1, k_2+1, k_3)$ we must have $(q-r_1)+(q-r_2)+r_3< q$, i.e. $q+r_3<r_1+r_2$. The right hand side of equation (\ref{want}) is $r_1+r_2-q-r_3$.  Let $c_1, c_2, c_3$ be such that $c_iq\le r_i$ for $i\in \{1, 2\}$, $c_3q\ge r_3$, and $c_1+c_2=c_3+1$. Let $(z_1, z_2, z_3):=(k_1+c_1, k_2+c_2, k_3+c_3) \in F$ (satisfying the equation $z_1+z_2-z_3=  k_1+k_2-k_3+1=$ an even integer) and $\sum_{i=1}^3 |qz_i-d_i|=(r_1-c_1q)+(r_2-c_2q)+(c_3q-r_3)=r_1+r_2-r_3-q$ and therefore equation (\ref{want}) holds.

If $\sum_{i=1}^3 \epsilon_i=3$ we have $\epsilon_1=\epsilon_2=\epsilon_3=1$ . Then $k_1+k_2+k_3$ is even, and in order for $\displaystyle (\frac{d_1}{q}, \frac{d_2}{q}, \frac{d_3}{q})$ to be in the open unit ball with center at $(k_1+1, k_2+1, k_3+1)$ we must have $(q-r_1)+(q-r_2)+(q-r_3)<q$, i.e. $r_1+r_2+r_3>2q$. The right hand side of equation (\ref{want}) is $r_1+r_2+r_3-2q$.
Choose $c_1, c_2, c_3$ such that $c_1+c_2+c_3=2$ and $c_i q < r_i$ for each $i \in \{1, 2, 3\}$. Let $(z_1, z_2, z_3)=(k_1+c_1, k_2+c_2, k_3+c_3) \in F$ (satisfying the equation $z_1+z_2+z_3=k_1+k_2+k_3+2=$ an even integer). We have
$\sum_{i=1}^3 |qz_i -d_i|=\sum_{i=1}^3 (r_i -c_iq) = r_1 +r_2+r_3 -2q$, and therefore equation (\ref{want}) holds.

\end{proof}

\section{Proof of the second inequality}
We now prepare to give the proof of the inequality ($\ge$) in Theorem (\ref{main_theorem}). The proof will be by induction on $e$ (where $q=p^e$). The base case $e=0$ is contained in Lemma (\ref{char0}). We will use the following notation:
\begin{notation}
Let $d_i =k_i q+ r_i$ for $i \in \{1, \ldots, n+1\}$ with $q=p^e$, $1\le k_i \le p-1$, $0 \le r_i \le q-1$. We let
$${\mathcal Min}(d_1, \ldots, d_{n+1}):=\mathrm{min}\{q\,  \mathcal{E}(k_1 + \epsilon_1, \ldots, k_{n+1}+\epsilon_{n+1}) + 
\sum_{\epsilon_i =0} r_i\} $$
where the minimum is taken over all $\epsilon_1, \ldots, \epsilon_{n+1} \in \{0, 1\}$.
\end{notation}
\begin{lemma}\label{inequalities}
We have
$$ \mathcal{E}_p(d_1, \ldots, d_{n+1}) \le \mathcal{E}_p(d_1+1, d_2, \ldots, d_{n+1})\le \mathcal{E}_p(d_1, d_2, \ldots, d_{n+1}) +1.$$
\end{lemma}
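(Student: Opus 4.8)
The plan is to establish the two inequalities separately, in each case by taking a minimal non-Koszul relation on one set of power sums and modifying it to produce a non-Koszul relation on the other set. Throughout I will use the convention $x_{n+1}=f$, the fact (noted just before Theorem~\ref{inequality}) that any $n$ of the elements $x_1^{d_1},\ldots,x_{n+1}^{d_{n+1}}$ form a regular sequence, and the observation that $\mathcal{E}_p$ is symmetric in its arguments, so it suffices to work with the first coordinate.

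For the left inequality $\mathcal{E}_p(d_1,d_2,\ldots,d_{n+1}) \le \mathcal{E}_p(d_1+1,d_2,\ldots,d_{n+1})$: first I would take a non-Koszul relation $\mathcal{A}=(a_1,\ldots,a_{n+1})^t$ of minimal degree on $x_1^{d_1+1},x_2^{d_2},\ldots,x_{n+1}^{d_{n+1}}$, so $a_1 x_1^{d_1+1}+\sum_{i\ge 2}a_i x_i^{d_i}=0$. Then $(x_1 a_1, a_2,\ldots,a_{n+1})^t$ is a relation on $x_1^{d_1},x_2^{d_2},\ldots,x_{n+1}^{d_{n+1}}$ of the same degree, and I must check it is still non-Koszul. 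The cleanest way is to verify that some coefficient other than the first one is outside the relevant ideal: since $a_i \notin (x_1^{d_1+1}, x_2^{d_2},\ldots,\widehat{x_i^{d_i}},\ldots,x_{n+1}^{d_{n+1}})$ for $i\ge 2$ and this ideal \emph{contains} $(x_1^{d_1},x_2^{d_2},\ldots,\widehat{x_i^{d_i}},\ldots,x_{n+1}^{d_{n+1}})$ only after we shrink the exponent on $x_1$ — so actually I would argue in the other direction: if $a_i \in (x_1^{d_1},\ldots)$ for all $i\ge 2$ and $x_1 a_1 \in (x_2^{d_2},\ldots,x_{n+1}^{d_{n+1}})$, then since $x_1$ is a nonzerodivisor modulo the regular sequence $x_2^{d_2},\ldots,x_{n+1}^{d_{n+1}}$ we get $a_1\in(x_2^{d_2},\ldots,x_{n+1}^{d_{n+1}})$, and combined with the relation this forces $a_1 x_1^{d_1+1}\in(x_2^{d_2},\ldots)$ hence the original relation is Koszul, a contradiction. (One should be slightly careful: "non-Koszul" means some coefficient is outside its ideal; I would pick the witnessing index and run the argument on that index, using symmetry to assume it is index $2$.)

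For the right inequality $\mathcal{E}_p(d_1+1,d_2,\ldots,d_{n+1}) \le \mathcal{E}_p(d_1,d_2,\ldots,d_{n+1})+1$: start with a minimal non-Koszul relation $(b_1,\ldots,b_{n+1})^t$ on $x_1^{d_1},x_2^{d_2},\ldots,x_{n+1}^{d_{n+1}}$, so $b_1 x_1^{d_1}+\sum_{i\ge 2}b_i x_i^{d_i}=0$. Multiplying through by $x_1$ gives $b_1 x_1^{d_1+1}+\sum_{i\ge 2}(x_1 b_i) x_i^{d_i}=0$, a relation on $x_1^{d_1+1},x_2^{d_2},\ldots,x_{n+1}^{d_{n+1}}$ of degree one larger. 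Again I need non-Koszulness; here the natural witness is the first coefficient $b_1$: I claim $b_1\notin (x_2^{d_2},\ldots,x_{n+1}^{d_{n+1}})$, for otherwise $\sum_{i\ge2}b_i x_i^{d_i}=-b_1x_1^{d_1}\in(x_2^{d_2},\ldots,x_{n+1}^{d_{n+1}})$, which says the original relation is Koszul in the first coordinate — but the original relation being non-Koszul only guarantees \emph{some} coordinate is a witness, so once more I would invoke symmetry to arrange that the witnessing coordinate is the one we are keeping fixed, say rename things so the witness is $b_2$; then $b_2\notin(x_1^{d_1},x_3^{d_3},\ldots,x_{n+1}^{d_{n+1}})\supseteq(x_1^{d_1+1},x_3^{d_3},\ldots)$ immediately gives that the new relation is non-Koszul. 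The coefficient of $x_2^{d_2}$ in the new relation is $x_1 b_2$, and I need $x_1 b_2\notin(x_1^{d_1+1},x_3^{d_3},\ldots,x_{n+1}^{d_{n+1}})$; since $x_1$ is a nonzerodivisor mod the regular sequence $(x_1^{d_1+1}\text{ is the subtle one})$ — more simply, if $x_1 b_2\in(x_1^{d_1+1},x_3^{d_3},\ldots)$ then modulo $(x_3^{d_3},\ldots,x_{n+1}^{d_{n+1}})$ we have $x_1 b_2\in(x_1^{d_1+1})$, so $b_2\in(x_1^{d_1})$ there, i.e. $b_2\in(x_1^{d_1},x_3^{d_3},\ldots,x_{n+1}^{d_{n+1}})$, contradiction.

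The main obstacle, and the only genuinely delicate point, is the bookkeeping around the definition of non-Koszul: it is an "exists a witnessing index" condition, so in both halves I must be careful to select the right index before multiplying by $x_1$, and I should check that multiplying the \emph{whole} relation vector by $x_1$ (rather than just one entry) is what keeps it a genuine relation while the witnessing property migrates correctly. The regular-sequence cancellation ($x_1$ a nonzerodivisor modulo an ideal generated by $n$ of the $x_i^{d_i}$) is the other ingredient, but that is exactly the standard fact recorded before Theorem~\ref{inequality}. Everything else is a degree count: multiplying a relation vector entrywise by $x_1$ raises $\deg(\mathcal{A})$ by exactly $1$ when we have enlarged $d_1$ to $d_1+1$, and leaves it unchanged when we have not — consistent with the two claimed bounds.
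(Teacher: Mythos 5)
Your construction for the right inequality is the same as the paper's: keep $b_1$ and multiply the remaining entries by $x_1$, so that the resulting vector is a relation on $x_1^{d_1+1}, x_2^{d_2}, \ldots, f^{d_{n+1}}$ of degree one larger. The detour through index $2$ is unnecessary, though: as recorded in the paper's Notation, $a_i\notin(\ldots)$ for some $i$ is equivalent to $a_i\notin(\ldots)$ for all $i$, so you may always test the first coordinate. The coefficient of $x_1^{d_1+1}$ in the new relation is still $b_1$, and the non-Koszul condition there, $b_1\notin(x_2^{d_2},\ldots,x_n^{d_n},f^{d_{n+1}})$, is \emph{literally identical} to the non-Koszul condition for the original relation; no cancellation is required, and the extra care about ``picking a witness'' is a non-issue.

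The left inequality, however, has a genuine gap. You form $(x_1a_1, a_2,\ldots,a_{n+1})^t$ (again the same vector the paper calls $\mathcal B'$) and then try to show it is automatically non-Koszul: you argue that $x_1a_1\in(x_2^{d_2},\ldots,x_n^{d_n},f^{d_{n+1}})$ would force $a_1\in(x_2^{d_2},\ldots,f^{d_{n+1}})$ because ``$x_1$ is a nonzerodivisor modulo the regular sequence $x_2^{d_2},\ldots,f^{d_{n+1}}$.'' That claim is false. The $n$ elements $x_2,\ldots,x_n,f$ form a system of parameters in $R=k[x_1,\ldots,x_n]$, so $(x_2^{d_2},\ldots,x_n^{d_n},f^{d_{n+1}})$ is primary to the homogeneous maximal ideal; the quotient is Artinian, and $x_1$ is a zerodivisor modulo it. (Contrast this with your right-inequality cancellation, which works because there you mod out by only $n-1$ of the powers and $x_1$ genuinely extends that to a regular sequence.) Consequently $\mathcal B'$ \emph{can} be Koszul, and when it is, your argument establishes nothing. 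The paper anticipates exactly this: it proves the right inequality first and then, in the left inequality, splits into the two cases that $\mathcal B'$ is Koszul or not. In the Koszul case, $b_1x_1\in(x_2^{d_2},\ldots,f^{d_{n+1}})$ together with $b_1\notin(x_2^{d_2},\ldots,f^{d_{n+1}})$ produces a non-Koszul relation on $x_1,x_2^{d_2},\ldots,f^{d_{n+1}}$, giving $\deg(b_1)+1\ge\mathcal E_p(1,d_2,\ldots,d_{n+1})$; repeated use of the already-proved right inequality then yields $\mathcal E_p(1,d_2,\ldots,d_{n+1})\ge\mathcal E_p(d_1,\ldots,d_{n+1})-(d_1-1)$, and hence $\deg(\mathcal B)\ge\mathcal E_p(d_1,\ldots,d_{n+1})+1$. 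Your proof needs this second case; as written it silently assumes the case never occurs.
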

\begin{proof}
Let $\mathcal{A}=(a_1, \ldots, a_{n+1})^t$ be a non-Koszul relation on $x_1^{d_1}, \ldots, x_n^{d_n}, f^{d_{n+1}}$ of degree equal to $\mathcal{E}_p(d_1, d_2, \ldots, d_{n+1})$. Then $\mathcal{A'} =(a_1, x_1a_2, \ldots, x_1 a_{n+1})^t$ is a relation on $x_1^{d_1+1}, x_2^{d_2}, \ldots, x_n^{d_n}, f^{d_{n+1}}$ and $\mathrm{deg}(\mathcal{A'})=\mathrm{deg}(\mathcal{A})+1$. For both $\mathcal{A}$ and $\mathcal{A'}$, the non-Koszul property translates to $a_1 \notin (x_2^{d_2}, \ldots, x_n^{d_n}, f^{d_{n+1}})$. This proves the second inequality.

Now we prove the first inequality. Let $\mathcal{B}=(b_1, b_2, \ldots, b_{n+1})^t$ be a non-Koszul relation on $x_1^{d_1+1}, \ldots, x_n^{d_n}, f^{d_{n+1}}$, of degree $\mathcal{E}_p(d_1+1, d_2, \ldots, d_{n+1})$. Then $\mathcal{B}':=(x_1b_1, b_2, \ldots, b_{n+1})^t$ is a relation on $x_1^{d_1}, \ldots, x_n^{d_n}, f^{d_{n+1}}$ of the same degree. If $\mathcal{B'}$ is not Koszul, the inequality follows. If $\mathcal{B'}$ is Koszul, then we have $b_1x_1 \in (x_2^{d_2}, \ldots, x_n^{d_n}, f^{d_{n+1}})$. Since $\mathcal{B}$ is not Koszul, $b_1 \notin (x_2^{d_2}, \ldots, x_n^{d_n}, f^{d_{n+1}})$, and it follows that $\mathrm{deg}(b_1)+1 \ge \mathcal{E}_p(1, d_2, \ldots, d_{n+1})$. Repeated application of the second inequality (which we have already proved) yields $\mathcal{E}_p(1, d_2, \ldots, d_{n+1})\ge \mathcal{E}_p(d_1, \ldots, d_{n+1}) -(d_1-1)$, and it follows that $\mathcal{E}_p(d_1+1, d_2, \ldots, d_{n+1})=\mathrm{deg}(\mathcal{B})=d_1 +1+ \mathrm{deg}(b_1) \ge  \mathcal{E}_p(d_1, \ldots, d_{n+1})+1$ as desired.

\end{proof}

For exponents that are small compared to the characteristic, we have $\mathcal{E}_p(d_1, \ldots, d_{n+1})=\mathcal{E}_0(d_1, \ldots, d_{n+1})$.
More precisely, we have:

\begin{lemma}\label{char0}
Let $1\le k_1, \ldots, k_{n+1}\le p-1$. We have 
\begin{equation}\label{condition'}
x_i^{k_i}\notin (x_1^{k_1}, \ldots, \hat{x_i}^{k_i}, \ldots, x_{n+1}^{k_{n+1}}) \Leftrightarrow
\end{equation}
$$
k_i \le \sum_{j\ne i} (k_j -1) = \sum _{j\ne i} k_j- n \Leftrightarrow k_i < \lceil \frac{\sum_{j=1}^{n+1} k_j - n +1}{2}\rceil 
$$
 If this condition holds for all $i\in \{1, \ldots, n+1\}$, then
\begin{equation}\label{ch0eq}
\mathcal{E}_p(k_1, \ldots, k_{n+1})=\mathrm{min} \left\lbrace \lceil \frac{\sum_{i=1}^{n+1} k_i - n+1 }{2} \rceil , p \right \rbrace
\end{equation} 
\end{lemma}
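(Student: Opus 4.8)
I would prove the displayed chain of equivalences in \eqref{condition'} first, and then obtain \eqref{ch0eq} from a matching upper and lower bound, the lower bound being where essentially all of the work lies.

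\emph{The equivalences.} Fix $i$ and use the $n$ elements $\{x_j:j\le n,\ j\ne i\}\cup\{f\}$ as variables of the polynomial ring; then $x_i$ becomes the linear form $f-\sum_{j\le n,\,j\ne i}x_j$, whose coefficients all lie in $\{1,-1\}$, and the condition $x_i^{k_i}\notin(x_1^{k_1},\dots,\widehat{x_i^{k_i}},\dots,f^{k_{n+1}})$ becomes: a $k_i$-th power of a linear form is not in the ideal generated by the $k_j$-th powers of the $n$ variables. Expanding that power as $\sum_{|\gamma|=k_i}\binom{k_i}{\gamma}(\pm1)^\gamma x^\gamma$ and using $k_i\le p-1$, every coefficient is a unit of $k$, so the power lies in the ideal exactly when every $\gamma$ with $|\gamma|=k_i$ has some coordinate at least the corresponding exponent, i.e.\ iff $k_i>\sum_{j\ne i}(k_j-1)$. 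Hence $x_i^{k_i}\notin(\cdots)\iff k_i\le\sum_{j\ne i}(k_j-1)$. Finally, with $S=\sum_j k_j$ the inequality $k_i\le\sum_{j\ne i}k_j-n$ reads $2k_i\le S-n$, and a short case distinction on the parity of $S-n$ identifies this with the integer inequality $k_i<\lceil(S-n+1)/2\rceil$.

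\emph{Upper bound.} By Theorem~\ref{one_direction} (whose hypothesis \eqref{condition} for the tuple $(k_1,\dots,k_{n+1})$ is exactly \eqref{condition'}) together with Theorem~\ref{ch0}, $\mathcal{E}_p(k_1,\dots,k_{n+1})\le\mathcal{E}_0(k_1,\dots,k_{n+1})=\lceil(S-n+1)/2\rceil$. For the bound by $p$: since $1\le k_j\le p-1$ and $f^p=x_1^p+\dots+x_n^p$,
\[
\sum_{j=1}^{n}x_j^{\,p-k_j}\cdot x_j^{k_j}\ -\ f^{\,p-k_{n+1}}\cdot f^{k_{n+1}}\ =\ 0
\]
is a relation of degree $p$, and by the computation above (applied with exponent $p-k_{n+1}<p$) it is non-Koszul whenever $p\le\sum_{j\le n}(k_j-1)+k_{n+1}=S-n$. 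If $p\le S-n$ this yields $\mathcal{E}_p\le p$; if $p>S-n$ then $\mathcal{E}_0=\lceil(S-n+1)/2\rceil\le\lceil p/2\rceil<p$, so the first bound already gives $\mathcal{E}_p\le\mathcal{E}_0<p$. In either case $\mathcal{E}_p\le\min\{\lceil(S-n+1)/2\rceil,\ p\}$.

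\emph{Lower bound, and the main obstacle.} Suppose for contradiction there is a non-Koszul relation of degree $D<\min\{\mathcal{E}_0,p\}$; since these minimal degrees are governed by ranks of matrices defined over $\mathbb{F}_p$, we may take $k=\mathbb{F}_p$. With $C=\mathbb{F}_p[x_1,\dots,x_n]/(x_1^{k_1},\dots,x_n^{k_n})$, such a relation is the same as a nonzero class $\bar g$ in the kernel of $\times f^{k_{n+1}}\colon C_{D-k_{n+1}}\to C_{D}$; over $\mathbb{Q}$ this map is injective, since $D<\mathcal{E}_0$ (Theorem~\ref{ch0}). The key input I need is a ``truncated Lefschetz'' fact: for $1\le k_i\le p-1$ the integer matrix of $\times f^{k_{n+1}}\colon C_{D-k_{n+1}}\to C_{D}$ (in the standard-monomial bases) loses no rank modulo $p$ so long as $D<p$; equivalently, $\big(\mathbb{Z}[x_1,\dots,x_n]/(x_1^{k_1},\dots,x_n^{k_n},f^{k_{n+1}})\big)_{D}$ has no $p$-torsion for $D<p$. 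Granting it: lift $g$ (written as a combination of standard monomials) to $\tilde g\in\mathbb{Z}[x]$, so that $\overline{\tilde g}\,\overline{f^{k_{n+1}}}=p\bar h$ in the $\mathbb{Z}$-free module $\mathbb{Z}[x]/(x_1^{k_1},\dots,x_n^{k_n})$; the no-torsion statement forces $\bar h\in\operatorname{im}(\times f^{k_{n+1}})$ over $\mathbb{Z}$, hence $\overline{f^{k_{n+1}}}\,(\overline{\tilde g}-p\cdot(\text{term}))=0$, and injectivity over $\mathbb{Q}$ --- hence over $\mathbb{Z}$, as $\mathbb{Z}[x]/(x_1^{k_1},\dots,x_n^{k_n})$ is torsion free --- gives $\overline{\tilde g}\equiv0\pmod p$, i.e.\ $g\in(x_1^{k_1},\dots,x_n^{k_n})$, contradicting non-Koszulness. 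So the whole argument rests on the truncated Lefschetz input, and that is the step I expect to be the real obstacle. To establish it I would work with the integer matrix of $\times f^{k_{n+1}}$ directly: it has full column rank over $\mathbb{Q}$ by Theorem~\ref{ch0}, its nonzero entries are the multinomial coefficients $\binom{k_{n+1}}{\gamma}$, which are units modulo $p$ because $k_{n+1}<p$, and it remains to exhibit, in the range $D<p$, a minor of size $\dim_{\mathbb{F}_p}C_{D-k_{n+1}}$ that is invertible modulo $p$; for $n=2$ such minors admit a Lindstr\"{o}m--Gessel--Viennot description as counts of families of non-intersecting lattice paths, and the general case should reduce to the analogous higher-dimensional count. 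Every other step is routine.
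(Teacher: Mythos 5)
Your treatment of the equivalences in \eqref{condition'} and of the upper bound is essentially the same as the paper's (choose the right $n$-element subset of $\{x_1,\dots,x_{n+1}\}$ as coordinates, use that all multinomial coefficients $\binom{m}{\gamma}$ with $m\le p-1$ are units, and observe that the Frobenius relation $f^p=\sum x_j^p$ gives a degree-$p$ relation that is non-Koszul exactly when $p\le\sum k_j-n$). Those two parts are fine.

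The lower bound is where your argument diverges, and it has a genuine gap. You correctly reduce the claim to the ``truncated Lefschetz'' statement that $\bigl(\mathbb{Z}[x_1,\dots,x_n]/(x_1^{k_1},\dots,x_n^{k_n},f^{k_{n+1}})\bigr)_D$ has no $p$-torsion for $D<p$, and your torsion-free/injectivity bookkeeping around that statement is sound. But you do not prove the statement itself: you propose to exhibit a full-size minor of the integer matrix of $\times f^{k_{n+1}}$ that is a unit mod $p$, note that for $n=2$ this is an LGV-type lattice-path count, and assert that ``the general case should reduce to the analogous higher-dimensional count.'' That is precisely the hard part, and it is left unjustified. (In fact for $n\ge 3$ the LGV determinant formula no longer applies as stated, so it really is a different combinatorial problem, not a routine generalization.) As written, the lower bound is therefore not established.

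For comparison, the paper's lower bound avoids any determinantal or integral-lift input. It takes a non-Koszul relation $g f^{k_{n+1}}\in(x_1^{k_1},\dots,x_n^{k_n})$ with $\deg(g)<p$, picks an $i$ with $\partial g/\partial x_i\ne0$ (possible since $g$ is not a $p$th power), differentiates, multiplies back by $f$, and produces a non-Koszul relation $\dfrac{\partial g}{\partial x_i}\,f^{k_{n+1}+1}\in(x_1^{k_1},\dots,x_i^{k_i-1},\dots,x_n^{k_n})$ of the same total relation degree but with $\deg$ of the coefficient dropped by $1$. Non-Koszulness is preserved because $\partial g/\partial x_i\in(\dots,x_i^{k_i-1},\dots)$ would imply $g\in(\dots,x_i^{k_i},\dots)$ by formal integration (again using $\deg(g)<p$). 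Induction on $\deg(g)$ then finishes the bound. This derivative/integration device is the key idea your proof is missing; if you want to salvage your route you would need to actually prove the no-$p$-torsion claim, and it is not clear that is easier than the paper's argument.
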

We summarize the two statements in Lemma \ref{char0} in the following equation:
$$
\mathcal{E}_p(k_1, \ldots, k_{n+1})=\mathrm{max}\left\lbrace k_1, \ldots, k_{n+1}, \mathrm{min}\left\lbrace \lceil \frac{\sum_{i=1}^{n+1} k_i - n +1 }{2}\rceil, p\right\rbrace  \right\rbrace
$$
whenever $1 \le k_1, \ldots, k_{n+1} \le p$. Also note that equation (\ref{ch0eq}) continues to hold if $k_i=\sum_{j\ne i}(k_j-1)+1$ or $k_i =\sum_{j\ne i} (k_j-1) +2$, since in this case we have $\displaystyle k_i=\lceil \frac{\sum_{i=1}^{n+1} k_i - n+1}{2} \rceil$.

The proof goes along the same lines as \cite{RRR}.
\begin{proof}
The statement in equation (\ref{condition'}) is obvious because all the multinomial coefficients involved in the expansion of $x_i^{k_i}=(x_{n+1}-(x_1 + \ldots + x_{i-1} + x_{i+1} + \ldots + x_n))^{k_i}$ are nonzero and therefore the necessary and sufficient condition for $x_i^{k_i}$ to be in $(x_1^{k_1}, \ldots, \hat{x_i}^{k_i}, \ldots, x_{n+1}^{k_{n+1}})$ is that all monomials in the variables $x_1, \ldots, x_{i-1}, x_{i+1}, \ldots, x_{n+1}$ of degree $k_i$ are divisible by one of $x_1^{k_1}, \ldots, \hat{x_i}^{k_i}, \ldots, x_{n+1}^{k_{n+1}}$.

Note that 
\begin{equation}
\label{rel3}(x_1 + \ldots + x_n)^p = x_1^p + \ldots +x_n^p
\end{equation}
is a relation on $x_1^{k_1}, \ldots, x_n^{k_n}, f^{k_{n+1}}$ of total degree $p$. Moreover, (\ref{rel3}) is a Koszul relation if and only if 
$
(x_1 + \ldots + x_n)^{p-k_{n+1}} \in (x_1^{k_1}, \ldots, x_n^{k_n}).
$
Since all the multinomial coefficients in the expansion of $(x_1 + \ldots + x_n)^{p-k_{n+1}}$ are non-zero, this is equivalent to $p-k_{n+1} \ge \sum_{i=1}^n k_i - n +1$ (so that every monomial in the expansion is divisible by one of $x_1^{k_1}, \ldots, x_n^{k_n}$). So equation (\ref{rel3}) is a Koszul relation on $x_1^{k_1}, \ldots, x_n^{k_n}, f^{k_{n+1}}$  if and only if $p\ge \sum_{i=1}^{n+1}k_i-n+1$ (in which case the minimum is not $p$).
We know from Theorem (\ref{one_direction}) that $\mathcal{E}_p(k_1, \ldots, k_{n+1})\le \lceil \frac{\sum_{i=1}^{n+1} k_i -n +1}{2}\rceil$, and this proves that when $p\le \sum_{i=1}^{n+1}  k_i -n$ we also have $\mathcal{E}_p(k_1, \ldots, k_{n+1})\le p$. Thus we conclude that
$$
\mathcal{E}(k_1, \ldots, k_{n+1})\le \mathrm{min}\left\lbrace \lceil \frac{\sum_{i=1}^{n+1}k_i -n +1 }{2} \rceil, p \right\rbrace.
$$

For the other inequality, consider a homogeneous non-Koszul relation 
\begin{equation}\label{rel4}
g(x_1+ \ldots + x_n)^{k_{n+1}} =a_1x_1^{k_1}+ \ldots + a_nx_n^{k_n}
\end{equation}
We want to show that $\mathrm{deg}(g) +k_{n+1} \ge \mathrm{min}\left\lbrace \lceil \frac{\sum_{i=1}^{n+1}k_i -n +1 }{2} \rceil, p \right\rbrace$.
We use induction on $\mathrm{deg}(g)$.

If $\mathrm{deg}(g)=0$, we must have $k_{n+1}\ge p$, or $k_{n+1} \ge k_1 + \ldots k_n -n +1$ (from the statement in equation (\ref{condition'})), and the desired inequality holds.

Assume $\mathrm{deg}(g)>0$ and let $i$ be such that $\partial g/\partial x_i \ne 0$ (note that we can assume that $\mathrm{deg}(g) <p$ and therefore $g$ is not a $p$th power, and such a $i$ exists). Take the derivatives with respect to $x_i$ of both sides in equation (\ref{rel4}).
We get
$$
\frac{\partial g} {\partial x_i} f^{k_{n+1}} + gk_{n+1}f^{k_{n+1}-1} \in (x_1^{k_1}, \ldots, x_i^{k_i-1}, \ldots, x_n^{k_n})
$$
Multiply through by $f$ and use equation (\ref{rel4}); we obtain 
\begin{equation}\label{rel5}
\frac{\partial g}{\partial x_i} f^{k_{n+1} + 1} \in (x_1^{k_1}, \ldots, x_i^{k_i-1}, \ldots, x_n^{k_n})
\end{equation}
This is a non-Koszul relation in $x_1^{k_1}, \ldots, x_i^{k_i-1}, \ldots, x_n^{k_n}, f^{k_{n+1}+1}$ (non-Koszul because $\partial g/\partial x_i \in (x_1^{k_1}, \ldots, x_i^{k_i-1}, \ldots, x_n^{k_n})$ implies $g \in (x_1^{k_1}, \ldots,x_i^{k_i}, \ldots,  x_n^{k_n})$ by integration, and we know that this is not the case by the assumption that relation (\ref{rel4}) is non-Koszul). The conclusion follows by applying the induction hypothesis to (\ref{rel5}).
\end{proof}

\begin{lemma}\label{less_than_q}
Assume that $d_1<q$ and $d_{n+1} =k_{n+1}q+r_{n+1}\ge q$ where $q=p^e$ is a power of $q$, $k_{n+1}\ge 1$ and $0 \le r_{n+1} \le q-1$. Then $$\mathcal{E}_p(d_1, \ldots, d_{n+1})\ge \mathrm{min}\{\mathcal{E}_p(q, d_2, \ldots, d_n, k_{n+1}q), k_{n+1}q+\mathcal{E}_p(d_1, \ldots, d_n, r_{n+1})\}.$$
\end{lemma}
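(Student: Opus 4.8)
The strategy is to take a non-Koszul relation $\mathcal{A}=(a_1,\dots,a_{n+1})^t$ on $x_1^{d_1},\dots,x_n^{d_n},f^{d_{n+1}}$ realizing the minimum, so $\deg\mathcal{A}=D:=\mathcal{E}_p(d_1,\dots,d_{n+1})$, and (using that the non-Koszul property holds in every coordinate) assume $a_{n+1}\notin I:=(x_1^{d_1},\dots,x_n^{d_n})$. Write $d_{n+1}=k_{n+1}q+r_{n+1}$, use the Frobenius identity $f^{q}=x_1^{q}+\dots+x_n^{q}$ to factor $f^{d_{n+1}}=f^{r_{n+1}}(f^{q})^{k_{n+1}}$, and put $h:=a_{n+1}f^{r_{n+1}}$; then $h(f^{q})^{k_{n+1}}=a_{n+1}f^{d_{n+1}}=-\sum_{i}a_ix_i^{d_i}\in I$ and $\deg h=D-k_{n+1}q$. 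I would then split into two cases according to whether $h\in I$.

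If $h\in I$, then $a_{n+1}\in I:f^{r_{n+1}}$ while $a_{n+1}\notin I$, so $a_{n+1}$ is the $f^{r_{n+1}}$-coordinate of a non-Koszul relation on $x_1^{d_1},\dots,x_n^{d_n},f^{r_{n+1}}$; hence $\deg a_{n+1}+r_{n+1}\ge\mathcal{E}_p(d_1,\dots,d_n,r_{n+1})$ and $D=\deg a_{n+1}+d_{n+1}\ge\mathcal{E}_p(d_1,\dots,d_n,r_{n+1})+k_{n+1}q$, the second term of the asserted minimum. (If $r_{n+1}=0$ this case does not occur, since then $h=a_{n+1}\notin I$.)

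The core of the argument is the case $h\notin I$, where the goal is to manufacture a non-Koszul relation on $x_1^{q},x_2^{d_2},\dots,x_n^{d_n},f^{qk_{n+1}}$ of degree $\le D$, giving the first term of the minimum. Here the hypothesis $d_1<q$ is used twice. First, setting $g:=x_2+\dots+x_n$, Frobenius gives $f^{qk_{n+1}}=(x_1^{q}+g^{q})^{k_{n+1}}=g^{qk_{n+1}}+x_1^{q}\Phi$ with $\Phi\in R$ and $g^{qk_{n+1}}\in k[x_2,\dots,x_n]$; since $x_1^{q}\in(x_1^{d_1})\subseteq I$, the membership $h(f^{q})^{k_{n+1}}\in I$ forces $hg^{qk_{n+1}}\in I$. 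Second, viewing elements of $R=k[x_2,\dots,x_n][x_1]$ as polynomials in $x_1$, I would use the elementary fact that if $v=\sum_{l\ge0}x_1^{l}u_l$ with $u_l\in k[x_2,\dots,x_n]$, then $v\in I$ iff $u_l\in(x_2^{d_2},\dots,x_n^{d_n})$ for all $l<d_1$. Writing $h=\sum_l x_1^{l}p_l$ with $p_l\in k[x_2,\dots,x_n]$: applied to $hg^{qk_{n+1}}$ this fact yields $p_lg^{qk_{n+1}}\in(x_2^{d_2},\dots,x_n^{d_n})$ for all $l<d_1$, and applied to $h$ together with $h\notin I$ it produces an index $m<d_1$ with $p_m\notin(x_2^{d_2},\dots,x_n^{d_n})$. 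For this $m$ we get $p_mf^{qk_{n+1}}=p_mg^{qk_{n+1}}+x_1^{q}p_m\Phi\in(x_1^{q},x_2^{d_2},\dots,x_n^{d_n})$, while $p_m\notin(x_1^{q},x_2^{d_2},\dots,x_n^{d_n})$ because $p_m$ involves no $x_1$ (setting $x_1=0$ would otherwise force $p_m\in(x_2^{d_2},\dots,x_n^{d_n})$). Thus $p_m$ is the $f^{qk_{n+1}}$-coordinate of a non-Koszul relation on $x_1^{q},x_2^{d_2},\dots,x_n^{d_n},f^{qk_{n+1}}$ of degree $\deg p_m+qk_{n+1}=(\deg h-m)+qk_{n+1}\le D$, so $\mathcal{E}_p(q,d_2,\dots,d_n,k_{n+1}q)\le D$, completing the case and hence the proof.

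I expect the one genuinely delicate point to be this last case: recognizing that one should first peel off the factor $f^{r_{n+1}}$, and then --- because $d_1<q$ lets $x_1^{q}$ be absorbed into $I$ while simultaneously collapsing $f^{qk_{n+1}}$ to the $x_1$-free form $g^{qk_{n+1}}$ --- extract from the obstruction $h$ an $x_1$-homogeneous slice $p_m$ living in $k[x_2,\dots,x_n]$, which then automatically survives the enlargement of $x_1^{d_1}$ to $x_1^{q}$. The slice-decomposition fact for membership in $I$ and the degree bookkeeping ($\deg h=D-k_{n+1}q$, $\deg p_m=\deg h-m$) are routine once this structure is in place.
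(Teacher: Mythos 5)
Your proof is correct, and it takes a genuinely different route from the paper's, even though both hinge on the same two facts: the Frobenius collapse $f^{qk_{n+1}}\equiv(x_2+\dots+x_n)^{qk_{n+1}}\pmod{x_1^q}$ and the hypothesis $d_1<q$, which lets $x_1^q$ be absorbed into $(x_1^{d_1})$. The paper works from the $a_1$-coordinate: it computes the colon ideal $(x_2^{d_2},\dots,x_n^{d_n},x_1^q,g^{k_{n+1}q}):x_1^{d_1}=(x_2^{d_2},\dots,x_n^{d_n},x_1^{q-d_1},f^{k_{n+1}q})$, writes $a_1$ in terms of those generators, and substitutes back to manufacture a single auxiliary relation $\mathcal{B}=(b_1,\,a_2+b_2x_1^{d_1},\,\dots,\,b_{n+1}x_1^{d_1}+a_{n+1}f^{r_{n+1}})^t$ on $x_1^q,x_2^{d_2},\dots,x_n^{d_n},f^{k_{n+1}q}$ of the same degree; the dichotomy is then whether $\mathcal{B}$ is Koszul, and the Koszul case collapses to $a_{n+1}f^{r_{n+1}}\in(x_1^{d_1},\dots,x_n^{d_n})$. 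You instead fix attention on $a_{n+1}$, set $h=a_{n+1}f^{r_{n+1}}$, and split immediately on whether $h\in I$; your branch structure therefore mirrors the paper's but is arranged in the opposite order, and your hard case ($h\notin I$) is resolved not by substitution but by decomposing $h$ over the free basis $\{x_1^l\}$ of $R$ over $k[x_2,\dots,x_n]$ and extracting a single $x_1$-free homogeneous slice $p_m$, which you then verify is the $f^{qk_{n+1}}$-coordinate of a non-Koszul relation on the enlarged generators. Your slicing argument avoids the colon-ideal computation and the bookkeeping of the substituted relation $\mathcal{B}$ entirely, at the cost of the (elementary but extra) observation about membership in $(x_1^{d_1},x_2^{d_2},\dots,x_n^{d_n})$ being detectable slice-by-slice in degrees $l<d_1$; the paper's version packages the same information into the Koszulness test for $\mathcal{B}$. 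Both are sound; your degree accounting ($\deg p_m=\deg h-m\le\deg h$, hence the new relation has degree $\le D$) is correct, and you correctly note that the $h\in I$ case can only occur when $r_{n+1}>0$.
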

\begin{proof}
Note that $f^{k_{n+1}q}\equiv (x_2+ \ldots + x_n)^{k_{n+1}q}$ (mod $(x_1^q)$).

Consider a non-Koszul relation $\mathcal{A}=(a_1, \ldots, a_{n+1})^t$ on $x_1^{d_1}, \ldots, x_n^{d_n}, f^{d_{n+1}}$.
Then
$$a_1 \in (x_2^{d_2}, \ldots, x_n^{d_n}, x_1^q, (x_2+ \ldots + x_n)^{k_{n+1}q}):x_1^{d_1} = 
$$
$$
(x_2^{d_2}, \ldots, x_n^{d_n}, x_1^{q-d_1}, (x_2+ \ldots + x_n)^{k_{n+1}q})= (x_2^{d_2}, \ldots, x_n^{d_n}, x_1^{q-d_1}, f^{k_{n+1}q})$$
(here we used the fact that $x_1, \ldots, x_n$ form a regular sequence).

Write $a_1 = b_1x_1^{q-d_1} +b_2x_2^{d_2} + \ldots + b_nx_n^{d_n}+ b_{n+1}f^{k_{n+1}q}$ and substitute back in the original relation. 

It follows that $\mathcal{B}:=(b_1, a_2', \ldots, a_n', b_{n+1}x_1^{d_1}+a_{n+1}f^{r_{n+1}})^t$ is a relation on $x_1^q, x_2^{d_2}, \ldots, x_n^{d_n}, f^{k_{n+1}q}$, where $a_i'=a_i + b_i x_1^{d_1}$.  Note that $\mathrm{deg}(\mathcal{B})=\mathrm{deg}(\mathcal{A})$. Therefore we have either $\mathrm{deg}(\mathcal{A}) \ge \mathcal{E}_p(q, d_2, \ldots, d_n, k_{n+1}q)$ (which gives the desired conclusion), or $\mathcal{B}$ is a Koszul relation. In the latter case, we have $b_{n+1}x_1^{d_1}+a_{n+1}f^{r_{n+1}}\in (x_1^q, x_2^{d_2}, \ldots, x_n^{d_n})$, thus $a_{n+1}f^{r_{n+1}}\in (x_1^{d_1}, \ldots, x_n^{d_n})$, and since $\mathcal{A}$ is a non-Koszul relation, it follows that $\mathrm{deg}(a_{n+1}) + r_{n+1} \ge \mathcal{E}_p(d_1, d_2, \ldots, d_n, r_{n+1})$, and therefore $\mathrm{deg}(\mathcal{A}) =\mathrm{deg}(a_{n+1})+k_{n+1}q + r_{n+1} \ge k_{n+1}q+ \mathcal{E}_p(d_1, d_2, \ldots, d_n, r_{n+1})$.

\end{proof}

\begin{definition}
If $d_i\ge d_i'$ for all $i=1, \ldots, n+1$, we say that a relation ${\mathcal A}=(a_1, \ldots, a_{n+1})^t$ on $x_1^{d_1}, \ldots, x_n^{d_n}, f^{d_{n+1}}$ {\it restricts to} the relation ${\mathcal A'}:=(x_1^{d_1-d_1'}a_1, \ldots, f^{d_{n+1}-d_{n+1}'}a_{n+1})^t$ on $x_1^{d_1'}, \ldots, x_n^{d_n'}, f^{d_{n+1}'}$. Note that $\mathrm{deg}({\mathcal A'})=\mathrm{deg}({\mathcal A})$.
\end{definition}

The next two Lemmas prove the result for the case when $r_i=0$ for some $i$ (since $\mathcal{E}_p(d_1, \ldots, d_{n+1})$ is symmetric in the $d_i$'s,  we may assume $i=n+1$ without loss of generality). We note that in this case the assumption that $k_i \le \sum_{j \ne i} k_j-n +1$ for all $i\in \{1, \ldots, n\}$ is not needed. Lemma (\ref{Lem}) below states that when $r_{n+1}=0$, then all the relations can be constructed as in the proof of Theorem (\ref{inequality}) and thus we not only know the smallest degree of a relation, but we can explicitely describe all the relations. This will no longer be true when $r_i>0$ for all $i \in \{1, \ldots, n+1\}$.

\begin{lemma}\label{Lem}
If $d_i=k_iq+r_i$ with $q=p^e$, $k_i\ge 1$ and $0\le r_i\le q-1$ for all $i=1, \ldots, n$, then every relation on $x_1^{d_1}, \ldots, x_n^{d_n}, f^{k_{n+1}q}$ restricts to a relation on $x_1^{k_1q}, \ldots, x_n^{k_nq}, f^{k_{n+1}q}$
of the form 
\begin{equation}\label{sum}
\sum_M g_{M, l} M \left(\begin{array}{c} a_{1, M, l}^q \\ \vdots \\ a_{n+1, M, l}^q\\ \end{array}\right)
\end{equation}
where $M$ ranges through all monomials of the form $M=x_{i_1}^{r_{i_1}}\cdots x_{i_s}^{r_{i_s}}$ where $1 \le i_1 < \ldots < i_s \le n$, $l$ runs through some indexing set,  $g_{M, l}\in k[x_1, \ldots, x_n]$, and for each $M, l$, $(a_{1, M, l}, \ldots, a_{n+1, M, l})^t$ is a relation on $x_1^{k_1}, \ldots, x_n^{k_n}, f^{k_{n+1}}$ which is restricted from a relation on $x_1^{k_1+\epsilon_1}, \ldots, x_n^{k_n+\epsilon_n}, f^{k_{n+1}}$ where $\epsilon _i=0$ for $i \in \{i_1, \ldots, i_s\}$ and $\epsilon _i =1$ for $i \notin \{ i_1, \ldots, i_s\}$.

Furthermore, if we work modulo the Koszul relations on $x_1^{d_1}, \ldots, x_n^{d_n}, f^{k_{n+1}q}$, then we may assume that each relation $(a_{1, M, l}, \ldots, a_{n+1, M, l})^t$ is the sum (\ref{sum}) is restricted from a non-Koszul relation on $x_1^{k_1+ \epsilon_1}, \ldots, x_n^{k_n + \epsilon_n}, f^{k_{n+1}}$.
\end{lemma}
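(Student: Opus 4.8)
\emph{Plan.} The idea is to descend along Frobenius. Set $y_i:=x_i^q$ for $i\le n$ and $y_{n+1}:=f^q=y_1+\cdots+y_n$, and let $S:=k[x_1^q,\dots,x_n^q]\subseteq R$, over which $R$ is free with basis the monomials $x^b$, $0\le b_j\le q-1$. Since $x_i^{d_i}=x_i^{r_i}y_i^{k_i}$ and $f^{k_{n+1}q}=y_{n+1}^{k_{n+1}}$, a relation $\mathcal{A}=(a_1,\dots,a_{n+1})^t$ on $x_1^{d_1},\dots,x_n^{d_n},f^{k_{n+1}q}$ reads $\sum_{i\le n}a_i x_i^{r_i}y_i^{k_i}+a_{n+1}y_{n+1}^{k_{n+1}}=0$. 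First I would expand each $a_i=\sum_b x^b a_{i,b}$ with $a_{i,b}\in S$ and collect the coefficient of each basis monomial $x^b$; a carry is produced in the $i$-th coordinate exactly when $b_i<r_i$. This yields, for every $b$, an $S$-relation
$$
\sum_{i=1}^{n} a_{i,c^{(i,b)}}\,y_i^{k_i+\epsilon_i^{(b)}}+a_{n+1,b}\,y_{n+1}^{k_{n+1}}=0,
$$
where $\epsilon_i^{(b)}=1$ if $b_i<r_i$ and $0$ otherwise, and $c^{(i,b)}\in\{0,\dots,q-1\}^n$ is the evident reindexing of $b$ in the $i$-th coordinate; it is a relation on $y_1^{k_1+\epsilon_1^{(b)}},\dots,y_n^{k_n+\epsilon_n^{(b)}},y_{n+1}^{k_{n+1}}$ over $S$.

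Next, since we may assume $k$ perfect, every element of $S=k[x_1^q,\dots,x_n^q]$ is a $q$-th power in $R$ (take $q$-th roots of coefficients), and Frobenius is injective, so the displayed $S$-relation is the componentwise $q$-th power $(\hat{\mathcal{R}}_b)^{[q]}$ of a relation $\hat{\mathcal{R}}_b$ on $x_1^{k_1+\epsilon_1^{(b)}},\dots,x_n^{k_n+\epsilon_n^{(b)}},f^{k_{n+1}}$ over $R$. Because restriction and the $q$-th power operation each act by multiplying every component by a fixed monomial, they commute with $R$-linear combinations, and reassembling the coefficients of the $x^b$ gives
$$
\mathcal{A}' := (x_1^{r_1}a_1,\dots,x_n^{r_n}a_n,a_{n+1})^t=\sum_{b}x^b\bigl(\mathrm{restr}(\hat{\mathcal{R}}_b)\bigr)^{[q]},
$$
the restriction of $\mathcal{A}$ to $x_1^{k_1q},\dots,x_n^{k_nq},f^{k_{n+1}q}$. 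Since $\epsilon_i^{(b)}=0$ is equivalent to $b_i\ge r_i$, the monomial $M_{I(b)}:=\prod_{i:\,b_i\ge r_i}x_i^{r_i}$ divides $x^b$; writing $x^b=g_b M_{I(b)}$ and grouping the $b$'s by the index set $I(b)=\{i:b_i\ge r_i\}$ puts this sum into exactly the form (\ref{sum}), with each inner relation $\mathrm{restr}(\hat{\mathcal{R}}_b)$ restricted from $\hat{\mathcal{R}}_b$.

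For the final assertion I would extract a single Koszul relation upstairs. For an elementary Koszul syzygy $K_{ij}$ on $x_1^{k_1+\epsilon_1^{(b)}},\dots,f^{k_{n+1}}$ a direct computation shows
$$
x^b\bigl(\mathrm{restr}(K_{ij})\bigr)^{[q]}=\mathrm{restr}\bigl(h_{b,ij}K_{ij}^{(d)}\bigr),
$$
where $K_{ij}^{(d)}$ is the corresponding elementary Koszul syzygy on $x_1^{d_1},\dots,x_n^{d_n},f^{k_{n+1}q}$ and $h_{b,ij}$ is an honest monomial — its exponents are nonnegative precisely because $b_i\ge r_i$ whenever $\epsilon_i^{(b)}=0$, and $q-r_j>0$ always. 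Expanding each Koszul $\hat{\mathcal{R}}_b=\sum_{i<j}c_{b,ij}K_{ij}$ and using once more that everything is linear and compatible with the $q$-th power, the contribution of all $b$ with $\hat{\mathcal{R}}_b$ Koszul equals $\mathrm{restr}\bigl(\sum_{b,\,i<j}c_{b,ij}^{\,q}h_{b,ij}K_{ij}^{(d)}\bigr)=:\mathrm{restr}(\mathcal{K})$ with $\mathcal{K}$ Koszul on $x_1^{d_1},\dots,x_n^{d_n},f^{k_{n+1}q}$. Replacing $\mathcal{A}$ by $\mathcal{A}-\mathcal{K}$ (allowed modulo Koszul relations) then leaves a decomposition of the form (\ref{sum}) in which every inner relation is restricted from a non-Koszul relation.

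The step I expect to be fussiest is the coefficient bookkeeping: keeping straight the carries $b_i<r_i$ versus $b_i\ge r_i$, the induced reindexing $c^{(i,b)}$ of the components $a_{i,b}$, and the parallel case analysis producing the monomials $h_{b,ij}$, so that the pattern of $\epsilon^{(b)}$ matches the index set $I(b)$ on the nose. The conceptual ingredients — flatness of $R$ over $S$, perfectness of $k$ to extract $q$-th roots, and the linearity of restriction and Frobenius — are routine.
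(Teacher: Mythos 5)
Your argument is correct and follows essentially the same route as the paper: view $k[x_1,\dots,x_n]$ as a free module over $k[x_1^q,\dots,x_n^q]$ with basis $\{x^b\,:\,0\le b_i\le q-1\}$, use perfectness of $k$ to extract $q$-th roots of the coefficient relations, and read off the $\epsilon$-pattern from the carry condition $b_i<r_i$. The only real difference is presentational — you construct the decomposition directly from the original relation rather than invoking flatness of Frobenius upfront, and you make the ``modulo Koszul'' step explicit by matching $x^b(\mathrm{restr}(K_{ij}))^{[q]}$ with monomial multiples of the downstairs Koszul syzygies $K_{ij}^{(d)}$, where the paper simply notes that the $(n+1)$-st entry of a Koszul summand lands in $(x_1^{k_1q+r_1},\dots,x_n^{k_nq+r_n})$.
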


\begin{proof}
We know by the flatness of the Frobenius functor that any relation on $x_1^{k_1q}, \ldots, x_n^{k_nq}, f^{k_{n+1}q}$ is a linear combination of relations of the form $(a_{1, l}^q, \ldots, a_{n+1, l}^q)^t$, where $(a_{1, l}, \ldots, a_{n+1, l})^t$ are relations on $x_1^{k_1}, \ldots, x_n^{k_n}, f^{k_{n+1}}$. 
We view $k[x_1, \ldots, x_n]$ as a free module over the ring $k[x_1^q, \ldots, x_n^q]$, with basis consisting of all the monomials $x_1^{j_1}\cdots x_n^{j_n}$ with $0 \le j_1, \ldots, j_n \le q-1$.
Therefore any relation on  
$x_1^{k_1q}, \ldots, x_n^{k_nq}, f^{k_{n+1}q}$
can be written as
$$
\sum _{\mu} {\mu}\left(\begin{array}{c} a_{1, \mu}^q \\ \vdots \\ a_{n+1, \mu}^q\\ \end{array}\right)
$$
where $\mu$ ranges through all the monomials $x_1^{j_1}\cdots x_n^{j_n}$ with $0 \le j_1, \ldots, j_n \le q-1$ (we are using the assumption that $k$ is a perfect field in order to incorporate scalar coefficients as part of the $a_{i, {\mu}}^q)$. Moreover, in order for the relation to be restricted from a relation on $x_1^{d_1}, \ldots, x_n^{d_n}, f^{k_{n+1}q}$ we need the entry in the $l$th component, namely $\sum _{\mu} \mu a_{l, \mu}^q$ to be divisible by $x_l^{r_l}$ for all $l=1, \ldots, n$. This implies that for every monomial $\mu=x_1^{j_1}\cdots x_n^{j_n}$ that has $j_l<r_l$, we must have $a_{l, \mu}$ divisible by  $x_l$ (one can see that the result of multiplying $x_l^{r_l}$ by any polynomial written in terms of the free basis of $k[x_1, \ldots, x_n]$ over $k[x_1^q, \ldots, x_n^q]$ is of the form $\sum G_{\mu} \mu $ where either $\mu$ is divisible by $x_l^{r_l}$, or $G_{\mu}$ is divisible by $x_l^q$). In other words, the relation $(a_{1, \mu}, \ldots, a_{n+1, \mu})^t$ where $\mu = x_1^{j_1} \cdots x_n^{j_n}$ is restricted from a relation on $x_1^{k_1+\epsilon_1}, \ldots, x_n^{k_n + \epsilon_n}, f^{k_{n+1}}$, where $\epsilon_l =1$ when $j_l< r_l$ and $\epsilon_l =0$ otherwise.

The claim now follows by combining all the monomials divisible by $x_{i_1}^{r_{i_1}}\cdots x_{i_s}^{r_{i_s}}$ but not by any $x_j^{r_j}$ for $j\notin \{i_1, \ldots, i_s\}$ into terms of the form $g_{M, l} M$ with $M=x_{i_1}^{r_{i_1}}\cdots x_{i_s}^{r_{i_s}}$, giving equation (\ref{sum}). Each $a_{j, M, l}$ is a $a_{j, \mu}$ where $\mu$ ranges through the monomials described above, and therefore $a_{j, M, l}$ is divisible by $x_j$ for all $j \notin \{i_1, \ldots, i_s\}$. This implies that $(a_{1, M, l}, \ldots, a_{n+1, mM, l})^t$ is restricted from a relation on $x_1^{k_1+\epsilon_1}, \ldots, x_n^{k_n+\epsilon_n}, f^{k_{n+1}}$ where $\epsilon _i=0$ for $i \in \{i_1, \ldots, i_s\}$ and $\epsilon _i =1$ for $i \notin \{ i_1, \ldots, i_s\}$.

In order to justify the last paragraph in the statement, note that if $a_{n+1, \mu} \in (x_1^{k_1+ \epsilon_1}, \ldots, x_n^{k_n + \epsilon_n})$, then it follows that $\mu a_{n+1, \mu}^q \in (x_1^{k_1q+r_1}, \ldots, x_n^{k_n q + r_n})$, where $\mu= x_1^{j_1} \cdots x_n^{j_n}$ is such that $j_l \ge r_l$ whenever $\epsilon_l =0$.

\end{proof}

\begin{corollary}\label{Cor}
 With notation as above, we have
$$\mathcal{E}_p(k_1q+r_1, \ldots, k_nq+r_n,  k_{n+1}q)=
{\mathcal Min}(k_1q+r_1, \ldots  k_nq+r_n,  k_{n+1}q).$$

Moreover, if $\mathcal{A}$ is a non-Koszul relation on $x_1^{k_1q+r_1}, \ldots, x_n^{k_nq+r_n}, f^{k_{n+1}q+r_{n+1}}$ such that the restriction of $\mathcal{A}$ to $x_1^{k_1q+r_1}, \ldots, x_n^{k_nq+r_n}, f^{k_{n+1}q}$ is Koszul, then we have $\mathrm{deg}({\mathcal A}) \ge {\mathcal Min}(k_1q+r_1, \ldots, k_nq+r_n, k_{n+1}q+r_{n+1})$.
\end{corollary}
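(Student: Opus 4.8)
The plan is to prove the two statements of the Corollary separately. The equality will come from combining the ``$\le$'' of Theorem~\ref{inequality} with a matching ``$\ge$'' read off from the structure theorem Lemma~\ref{Lem}, and the second statement will then be deduced from the first one together with Lemma~\ref{less_than_q}. For the inequality $\mathcal{E}_p(k_1q+r_1,\ldots,k_nq+r_n,k_{n+1}q)\le \mathcal{Min}(k_1q+r_1,\ldots,k_nq+r_n,k_{n+1}q)$ there is nothing to do: this is exactly Theorem~\ref{inequality} applied with $r_{n+1}=0$, after taking the minimum over all $(\epsilon_1,\ldots,\epsilon_{n+1})\in\{0,1\}^{n+1}$.

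For the reverse inequality, I would start with a non-Koszul relation $\mathcal{A}=(a_1,\ldots,a_{n+1})^t$ on $x_1^{d_1},\ldots,x_n^{d_n},f^{k_{n+1}q}$ (where $d_i=k_iq+r_i$) of minimal degree $D:=\mathcal{E}_p(d_1,\ldots,d_n,k_{n+1}q)$, and restrict it to the relation $\mathcal{A}'=(x_1^{r_1}a_1,\ldots,x_n^{r_n}a_n,a_{n+1})^t$ on $x_1^{k_1q},\ldots,x_n^{k_nq},f^{k_{n+1}q}$, which still has degree $D$ and is nonzero. By the last paragraph of Lemma~\ref{Lem}, and after subtracting from $\mathcal{A}$ a homogeneous Koszul relation of degree $D$ (which keeps $\mathcal{A}$ non-Koszul, hence nonzero, and of minimal degree), I may assume $\mathcal{A}'=\sum_{M,l}g_{M,l}M(a_{1,M,l}^q,\ldots,a_{n+1,M,l}^q)^t$, where each $(a_{1,M,l},\ldots,a_{n+1,M,l})^t$ is restricted from a \emph{non-Koszul} relation $\mathcal{B}_{M,l}$ on $x_1^{k_1+\epsilon_1},\ldots,x_n^{k_n+\epsilon_n},f^{k_{n+1}}$ with $\epsilon_i=0$ exactly when $x_i^{r_i}\mid M$. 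Decomposing each $g_{M,l}$ into homogeneous pieces, $\mathcal{A}'$ becomes a sum of homogeneous relations $g_{M,l}M(a_{1,M,l}^q,\ldots,a_{n+1,M,l}^q)^t$ of degree $\deg(g_{M,l})+\deg(M)+q\deg(\mathcal{B}_{M,l})$; since $\mathcal{A}'\ne 0$ is homogeneous of degree $D$, at least one such term of degree $D$ is nonzero. For that term, $\deg(M)=\sum_{\epsilon_i=0}r_i$ (with $r_{n+1}=0$ the index $n+1$ is irrelevant) and $\deg(\mathcal{B}_{M,l})\ge \mathcal{E}_p(k_1+\epsilon_1,\ldots,k_n+\epsilon_n,k_{n+1})$ because $\mathcal{B}_{M,l}$ is non-Koszul, so $D\ge q\,\mathcal{E}_p(k_1+\epsilon_1,\ldots,k_n+\epsilon_n,k_{n+1})+\sum_{\epsilon_i=0}r_i\ge \mathcal{Min}(d_1,\ldots,d_n,k_{n+1}q)$. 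The only technical point to verify is that ``working modulo Koszul relations on $x_1^{d_1},\ldots,x_n^{d_n},f^{k_{n+1}q}$'' is compatible with restriction, i.e.\ that the restriction of such a Koszul relation lies in the Koszul submodule of relations on $x_1^{k_1q},\ldots,x_n^{k_nq},f^{k_{n+1}q}$.

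For the second assertion, let $\mathcal{A}=(a_1,\ldots,a_{n+1})^t$ be non-Koszul on $x_1^{d_1},\ldots,x_n^{d_n},f^{k_{n+1}q+r_{n+1}}$ with Koszul restriction to $x_1^{d_1},\ldots,x_n^{d_n},f^{k_{n+1}q}$. The $(n+1)$-st coordinate of the restriction is $f^{r_{n+1}}a_{n+1}$, so Koszulness forces $f^{r_{n+1}}a_{n+1}\in(x_1^{d_1},\ldots,x_n^{d_n})$; writing $f^{r_{n+1}}a_{n+1}=\sum_{i\le n}b_ix_i^{d_i}$ gives a relation $(b_1,\ldots,b_n,-a_{n+1})^t$ on $x_1^{d_1},\ldots,x_n^{d_n},f^{r_{n+1}}$ which is non-Koszul since $a_{n+1}\notin(x_1^{d_1},\ldots,x_n^{d_n})$, whence $\deg(\mathcal{A})=\deg(a_{n+1})+k_{n+1}q+r_{n+1}\ge k_{n+1}q+\mathcal{E}_p(d_1,\ldots,d_n,r_{n+1})$. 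It then remains to show $k_{n+1}q+\mathcal{E}_p(d_1,\ldots,d_n,r_{n+1})\ge \mathcal{Min}(d_1,\ldots,d_n,k_{n+1}q+r_{n+1})$, and this is the part where I expect the real work. Since $r_{n+1}<q\le d_1,\ldots,d_n$, I would apply Lemma~\ref{less_than_q} to $\mathcal{E}_p(d_1,\ldots,d_n,r_{n+1})$ after relabelling (taking $f^{r_{n+1}}$ as the ``small'' exponent and some $x_j^{d_j}$ as ``$f^{d_{n+1}}$''), bounding it below by a minimum of two terms: one has last exponent a multiple of $q$ and is handled by the first part of this Corollary, the other has one more exponent below $q$. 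Iterating this reduction on the number of exponents that are $\ge q$, and finishing with Lemma~\ref{char0}, reduces everything to $\mathcal{Min}$-type quantities in the truncations $k_i,k_i+1$. The main obstacle I anticipate is combinatorial: tracking the $k_jq$-terms accumulated along the recursion and checking that the resulting lower bound is exactly $\mathcal{Min}(d_1,\ldots,d_n,k_{n+1}q+r_{n+1})$; I expect this to go through by induction on $n$, using the monotonicity estimates of Lemma~\ref{inequalities}.
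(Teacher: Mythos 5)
Your proof of the first equality is essentially the paper's: the ``$\le$'' is Theorem~\ref{inequality} with $r_{n+1}=0$, and the ``$\ge$'' comes from decomposing the restriction via Lemma~\ref{Lem}, discarding the Koszul terms, and reading off the degree of a surviving non-Koszul term. The ``technical point'' you flag (that working modulo Koszul relations is compatible with restriction) is exactly the content of the last paragraph of Lemma~\ref{Lem}, where it is verified that $a_{n+1,\mu}\in(x_1^{k_1+\epsilon_1},\ldots,x_n^{k_n+\epsilon_n})$ forces $\mu\,a_{n+1,\mu}^q\in(x_1^{k_1q+r_1},\ldots,x_nq^{k_nq+r_n})$; so you can cite that instead of re-deriving it.

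For the second assertion your proposal diverges from the paper and, as you yourself note, leaves the real work undone. You reduce to bounding $k_{n+1}q+\mathcal{E}_p(d_1,\ldots,d_n,r_{n+1})$ from below and then propose a recursion via Lemma~\ref{less_than_q}, ``iterating on the number of exponents $\ge q$.'' This is a genuine gap: the recursion's bookkeeping is not carried out, and Lemma~\ref{less_than_q} produces a two-term minimum whose second branch changes the tuple in a way that does not obviously dominate $\mathcal{Min}(d_1,\ldots,d_{n+1})$. The paper avoids all of this with a two-line argument you already had the tools for. From $a_{n+1}f^{r_{n+1}}\in(x_1^{d_1},\ldots,x_n^{d_n})$ and $a_{n+1}\notin(x_1^{d_1},\ldots,x_n^{d_n})$ one gets $\deg(a_{n+1})+r_{n+1}\ge\mathcal{E}_p(d_1,\ldots,d_n,r_{n+1})$; then the second inequality of Lemma~\ref{inequalities}, applied $q-r_{n+1}$ times to the last slot, gives
$\mathcal{E}_p(d_1,\ldots,d_n,r_{n+1})\ge\mathcal{E}_p(d_1,\ldots,d_n,q)-(q-r_{n+1})$.
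Now the first part of the Corollary evaluates $\mathcal{E}_p(d_1,\ldots,d_n,q)$ as a $\mathcal{Min}$-expression with $k_{n+1}=1$, and applying Lemma~\ref{inequalities} once more (in the form $\mathcal{E}_p(\ldots,1)+k_{n+1}-1\ge\mathcal{E}_p(\ldots,k_{n+1})$) after adding $k_{n+1}q+r_{n+1}$ to both sides yields
$\deg(\mathcal{A})\ge \min_{\epsilon}\bigl\{q\,\mathcal{E}_p(k_1+\epsilon_1,\ldots,k_n+\epsilon_n,k_{n+1})+\sum_{\epsilon_j=0,\,j\le n}r_j\bigr\}+r_{n+1}\ge\mathcal{Min}(d_1,\ldots,d_{n+1})$,
where the last step uses that the displayed minimum is the restriction of $\mathcal{Min}$ to $\epsilon_{n+1}=0$. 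You should replace the Lemma~\ref{less_than_q} recursion with this direct argument; as written, the proposal does not establish the second claim.
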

\begin{proof}

 The first statement follows immediately from Lemma (\ref{Lem}) by observing that if any $(a_{1, M, l}, \ldots, a_{n+1, M, l})^t$ from equation (\ref{sum}) is a Koszul relation on $x_1^{k_1+\epsilon_1}, \ldots, x_n^{k_n+\epsilon_n}, f^{k_{n+1}}$, then $a_{n+1, M, l}\in (x_1^{k_1+\epsilon_1}, \ldots, x_n^{k_n+\epsilon_n})$, which implies $M a^q_{n+1, M, l} \in (x_1^{k_1q+r_1}, \ldots, x_n^{k_nq+r_n})$ and therefore $M (a^q_{1, M, l}, \ldots, a^q_{n+1, M, l})^t$ is a Koszul relation. Thus we may assume without loss of generality that all the relations on the right hand side (\ref{sum}) are non-Koszul.

Now we prove the second statement.
The assumption that the restriction of $\mathcal{A}$ is Koszul but $\mathcal{A}$ is not implies that $a_{n+1} f^{r_{n+1}} \in (x_1^{k_1q+r_1}, \ldots, x_n^{k_nq+r_n})$ but $a_{n+1} \notin (x_1^{k_1q+r_1}, \ldots, x_n^{k_nq+r_n})$. Using the second inequality from  Lemma (\ref{inequalities}),  we have $$\mathrm{deg}(a_{n+1}) + r_{n+1} \ge \mathcal{E}_p(k_1q+r_1, \ldots, k_nq+r_n, r_{n+1}) \ge 
$$
$$ \mathcal{E}_p(k_1q+r_1, \ldots, k_nq+r_n, q)-q+r_{n+1}.$$ From the first part of the statement, we now know that 
$$\mathrm{deg}(a_{n+1}) + r_{n+1} \ge {\mathcal Min}(d_1, \ldots, d_n, q) -q + r_{n+1}=
$$
$$
\mathrm{min}\{\mathcal{E}_p(k_1+ \epsilon_1, \ldots, k_n+\epsilon_n, 1) q +  \sum_{\epsilon_j=0, j \le n } r_j\}-q+r_{n+1},$$
 and therefore
$$\mathrm{deg}(\mathcal{A}) = k_{n+1} q+r_{n+1} + \mathrm{deg}(a_{n+1}) \ge
$$
$$
 \mathrm{min}\{\mathcal{E}_p(k_1+ \epsilon_1, \ldots, k_n+\epsilon_n, 1) q + \sum_{\epsilon_j=0, j \le n} r_j\} +(k_{n+1}-1)q+  r_{n+1}\ge
$$
$$ \mathrm{min}\{\mathcal{E}_p(k_1+ \epsilon_1, \ldots, k_n+\epsilon_n, k_{n+1}) q + \sum_{\epsilon_j=0, j \le n} r_j\}+ r_{n+1}$$where the last inequality follows from Lemma (\ref{inequalities}). Since the right hand side of the inequality above is greater than or equal to ${\mathcal Min}(d_1, \ldots, d_{n+1})$, we have the desired conclusion.
\end{proof}

\begin{lemma}\label{remainders}
Let $p \ge 3$, $q=p^e$ with $e \ge 1$, $n\ge 3$ and assume by the inductive hypothesis that the conclusion of Theorem (\ref{main_theorem}) holds for $q'=p^{e-1}$.

Let $1 \le r_1 \le r_2 \le \ldots \le  r_{n+1} \le q-1$ be such that $r_n + r_{n+1} \ge q$. Then
$$
\mathcal{E}_p(r_1, \ldots, r_{n+1}) \ge \mathrm{min}( r_1+r_2, q)
$$

\end{lemma}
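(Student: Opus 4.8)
The plan is to prove the contrapositive: assume there is a non-Koszul relation $\mathcal{A}=(a_1,\ldots,a_{n+1})^t$ on $x_1^{r_1},\ldots,x_n^{r_n},f^{r_{n+1}}$ of degree $D<\min(r_1+r_2,q)$, and derive a contradiction. First I dispose of an easy case: a non-Koszul relation has all of its coefficients nonzero, so $\mathcal{E}_p(r_1,\ldots,r_{n+1})\ge\max_i r_i=r_{n+1}$, and hence if $r_{n+1}\ge r_1+r_2$ the conclusion is immediate; so I may assume $r_2\le r_{n+1}<r_1+r_2$. Put $I:=(x_1^{r_1},\ldots,x_n^{r_n})$. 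Since $\mathcal{A}$ is non-Koszul, its last coefficient $g:=a_{n+1}$ lies outside $I$, while $gf^{r_{n+1}}=-\sum_{i\le n}a_ix_i^{r_i}\in I$; moreover $\deg g=D-r_{n+1}<r_1$ because $r_{n+1}\ge r_2$. Thus it is enough to show that there is no nonzero $g$ with $\deg g<r_1$, $\deg g+r_{n+1}<q$, and $gf^{r_{n+1}}\in I$.

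The two structural inputs are the Frobenius identity $f^q=x_1^q+\cdots+x_n^q\in I$ (each $x_i^q=x_i^{q-r_i}x_i^{r_i}\in I$ because $r_i<q$) and the hypothesis $r_n+r_{n+1}\ge q$, which together place $f^{q-r_{n+1}}$ inside $I:f^{r_{n+1}}$ — so a relation of degree exactly $q$ always exists, and the claim is that in the regime $r_{n+1}<r_1+r_2$ nothing shorter occurs. To control $I:f^{r_{n+1}}$ in low degrees I would induct on $n$, reducing modulo one variable. Fix $i\in\{1,\ldots,n-1\}$ and reduce $\mathcal{A}$ modulo $x_i$; if $x_i\nmid g$ this produces a relation on $\{x_j^{r_j}:j\le n,\ j\ne i\}$ together with $(\sum_{j\ne i}x_j)^{r_{n+1}}$ in the polynomial ring on the remaining $n-1$ variables, of degree at most $D$, and it is non-Koszul because its last coefficient $\bar g\ne0$ has degree $\le\deg g<r_1\le\min_{j\ne i}r_j$. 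The new exponent list still satisfies the hypotheses of the lemma: all entries are $<q$, and the two largest remain $r_n$ and $r_{n+1}$ (untouched, as $i\ne n$), so their sum is $\ge q$. For $n\ge4$ the lemma in $n-1\ge3$ variables then forces a contradiction, since the reduced relation would have degree $\ge\min(r_1+r_2,q)>D$ (when $i\in\{1,2\}$ the two smallest exponents of the reduced list are still at least as large as $r_1,r_2$). Hence $x_i\mid g$ for every $i\le n-1$; writing $g=(\prod_{i<n}x_i)g_1$ gives $g_1f^{r_{n+1}}\in I:\prod_{i<n}x_i=(x_1^{r_1-1},\ldots,x_{n-1}^{r_{n-1}-1},x_n^{r_n})$, a list whose two largest exponents are again $r_n,r_{n+1}$, and $\deg g_1=\deg g-(n-1)$ still satisfies $\deg g_1<r_1-1$ and $\deg g_1+r_{n+1}<q$. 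Iterating, the degree of the leftover factor drops by $n-1$ at each step while the divisibility conclusion persists, which becomes impossible once it falls below $n-1$.

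This leaves the base case $n=3$, which must be handled directly, because reduction to two variables is too lossy: Han's Theorem~\ref{Han} would only bound the reduced relation below by $\mathcal{E}_p(r_2,r_3,r_4)$, which can be as small as $\lceil(r_2+r_3+r_4-1)/2\rceil$, well below $\min(r_1+r_2,q)$. For $n=3$ one has a nonzero $g$ of degree $m<r_1$ with $g(x_1+x_2+x_3)^{r_4}\in(x_1^{r_1},x_2^{r_2},x_3^{r_3})$, $m+r_4<q$, $m+r_4<r_1+r_2$, and $r_3+r_4\ge q$; I would use the mod-$p$ description of $f^{r_4}$, whose monomials are exactly the $x_1^{e_1}x_2^{e_2}x_3^{e_3}$ with $e_1+e_2+e_3=r_4$ and no base-$p$ carries, to locate a monomial of $gf^{r_4}$ divisible by none of $x_1^{r_1},x_2^{r_2},x_3^{r_3}$: here $r_3+r_4\ge q$ is what confines the admissible $(e_1,e_2,e_3)$ to a range narrow enough that $\deg g=m<r_1$ cannot push all the resulting products into $I$. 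Cancellations among equal monomials $x^{u+e}$ (for different monomials $x^u$ of $g$ and $x^e$ of $f^{r_4}$) are most cleanly controlled by passing to the Gorenstein Artinian ring $R/I$, where non-vanishing of $\bar g\,\bar f^{r_4}$ is dual, under the perfect pairing in socle degree $s=r_1+r_2+r_3-3$, to surjectivity of $\times\bar f^{r_4}$ from degree $s-m-r_4$ onto degree $s-m$; this is also the point where the inductive hypothesis that Theorem~\ref{main_theorem} holds for $q'=p^{e-1}$ is likely invoked, via writing $r_4=k'_4q'+\rho_4$ with $k'_4\ge1$ (possible because $r_4\ge\lceil q/2\rceil>q'$ when $p\ge3$) and peeling off a $q'$-th power. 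The hypothesis $p\ge3$ is used here and in the edge cases to exclude a Frobenius shortcut $f^t\in I$ with $t<r_{n+1}$: such a shortcut would force $\mathcal{E}_p(r_1,\ldots,r_{n+1})=r_{n+1}<\min(r_1+r_2,q)$, but it is ruled out since $\lceil q/2\rceil$ (a lower bound for $r_{n+1}$ here) is never a power of an odd prime. The main obstacle is exactly this $n=3$ analysis: extracting from the mod-$p$ support of $f^{r_4}$, against three pure-power generators, the precise threshold $\min(r_1+r_2,q)$ rather than a weaker bound.
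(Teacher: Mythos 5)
Your proposal takes a genuinely different route from the paper. The paper inducts only on $e$: it writes $r_i = u_i q' + v_i$, invokes the theorem's inductive hypothesis for $q'$, and handles all $n\ge3$ at once via a long case analysis driven by the inequality $\lceil (\sum l_i-n+1)/2\rceil\ge l_1+l_2-1$. You instead add an induction on $n$, reducing $n$ to $n-1$ by specialization.

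Your preliminary observations are correct and useful: since a non-Koszul relation has every entry outside the relevant ideal (hence nonzero), $\mathcal{E}_p(r_1,\ldots,r_{n+1})\ge\max_i r_i=r_{n+1}$, which immediately disposes of the case $r_{n+1}\ge r_1+r_2$ and forces $\deg g<r_1$. The $n\ge4$ reduction is sound after some checking that you don't write out but that does go through: reducing modulo $x_i$ with $i\le n-1$ keeps the two largest exponents $r_n, r_{n+1}$ and hence preserves the hypothesis $r_n+r_{n+1}\ge q$; the reduced relation is non-Koszul because $\deg\bar g<\min_{j\ne i}r_j$; the lemma in $n-1\ge3$ variables then yields $D\ge\min(r_1+r_2,q)$, a contradiction; and after dividing $g$ by $x_1\cdots x_{n-1}$ one has $\deg g_1+r_{n+1}=D-(n-1)<\min(r_1+r_2-2,q)$, so the same argument applied to the list $(r_1-1,\ldots,r_{n-1}-1,r_n,r_{n+1})$ terminates the iteration (and the constraint that the shifted exponents stay $\ge1$ is satisfied because the iteration depth is at most $\lfloor\deg g/(n-1)\rfloor\le(r_1-1)/3<r_1$). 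This is a cleaner way to handle $n\ge 4$ than the paper's.

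The genuine gap is the base case $n=3$, which you only sketch and yourself flag as "the main obstacle." Reducing modulo $x_1$ drops to Han's $n=2$ setting, and as you correctly note, Han's bound (roughly $\lceil(r_2+r_3+r_4-1)/2\rceil$) is too weak to recover $\min(r_1+r_2,q)$, so no shortcut via specialization is available. Your outline for $n=3$ — analyzing the carry-free support of $f^{r_4}$, Gorenstein duality, and peeling off a $q'$-th power via the theorem's inductive hypothesis — names the right ingredients, but none of it is carried out, and controlling cancellations in $g\,f^{r_4}$ modulo $(x_1^{r_1},x_2^{r_2},x_3^{r_3})$ to get exactly the threshold $\min(r_1+r_2,q)$ is precisely the hard content of the lemma. (For comparison, the paper's proof of this lemma spends essentially all of its length on this: writing $r_i=u_iq'+v_i$, applying the inductive hypothesis for $q'$, and running the case analysis on $\epsilon_1+\epsilon_2$ together with fallback arguments via Lemma \ref{less_than_q} and Lemma \ref{inequalities} when the theorem's hypotheses fail for the $u_i$.) So as it stands the proposal reduces the problem to a non-trivial special case but does not complete the proof.
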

Before giving the proof, we wish to emphasize the fact that the assumption that $n \ge 3$ is essential in Lemma (\ref{remainders}), and that this is the main reason for the difference in the main result for $n\ge 3$ compared to the result in (\cite{Han_thesis}) for $n=2$.

\begin{proof}
Note that for any positive integers $1\le l_1\le l_2\ldots\le  l_{n+1}$, we have 
\begin{equation}\label{this_is_why}
\lceil \frac{\sum_{i=1}^{n+1} l_i - n+1 }{2} \rceil  \ge l_1 + l_2 -1
\end{equation}
and the inequality is strict unless $n=3$ and $l_1=\ldots = l_4$, or $n\ge 4$ and $l_1=\ldots = l_{n+1}=1$. This is because $\sum_{i=1}^{n+1} l_i -n +1 = \sum_{i=1}^{n-1} (l_i -1) + l_n + l_{n+1} \ge l_1-1 + l_2 -1 + l_n + l_{n+1} \ge 2l_1 + 2l_2 -2$.

Consider the case $q=p$. Then the desired conclusion follows from Lemma (\ref{char0}) together with the inequality  (\ref{this_is_why}), since in the case $r_1=\ldots = r_{n+1}=r$ the assumption that $r_n+r_{n+1}\ge p$ guarantees that $2r-1\ge p$ (since $p$ is odd). 

Now let $q=p^e$ with $e \ge 2$. Write $r_i = u_i q' + v_i$ where $q'=p^{e-1}$, $0 \le u_i \le p-1$, $0 \le v_i \le q'-1$ and we are assuming that the conclusion of Theorem (\ref{main_theorem}) holds for $q'=p^{e-1}$ instead of $q$.

We first treat the case when Theorem (\ref{main_theorem}) can be applied to $r_1, \ldots, r_{n+1}$, i.e. $1\le u_1 \le u_2 \le \ldots \le u_{n+1} \le \sum_{i=1}^n u_i -n +1$. Then we have
$$\mathcal{E}_p(r_1, \ldots, r_{n+1})=\mathrm{min}\{q'\mathcal{E}_p(u_1 + \epsilon_1, \ldots, u_{n+1}+\epsilon_{n+1})+ \sum_{\epsilon_i =0} v_i\}$$
 with the minimum taken over  all the choices of $\epsilon_1, \ldots, \epsilon_{n+1} \in \{0, 1\}$. Fix $\epsilon_1, \ldots, \epsilon_{n+1}$ for which the minimum is reached. If $\mathcal{E}(u_1+\epsilon_1, \ldots, u_{n+1}+\epsilon_{n+1})=p$, then we have $\mathcal{E}(r_1, \ldots, r_{n+1})\ge pq'=q$ and we have the desired conclusion. Thus we will assume that
$$
\mathcal{E}(u_1+ \epsilon_1, \ldots, u_{n+1}+\epsilon_{n+1})=\lceil \frac{\sum _{i=1}^{n+1} (u_i+\epsilon_i) - n+1}{2} \rceil.
$$
We consider the following cases: $\epsilon_1+\epsilon_2=0, \epsilon_1+\epsilon_2=1, \epsilon_1+\epsilon_2=2$.

{\bf Case 1:} $\epsilon_1+\epsilon_2=0$. 
The inequality (\ref{this_is_why}) implies that $\mathcal{E}_p(u_1, u_2, u_3+\epsilon_3, \ldots, u_{n+1}+\epsilon_{n+1}) \ge u_1 + u_2 -1$, with strict inequality unless $u_1=u_2 =u_3+\epsilon_3=\ldots = u_{n+1} + \epsilon_{n+1}:=u$. If the inequality (\ref{this_is_why})  is strict, then we have 
$\mathcal{E}_p(r_1, \ldots, r_{n+1}) \ge (u_1 + u_2) q' + v_ 1 + v_2 = r_1 +r_2$ and thus the desired conclusion follows. If the inequality (\ref{this_is_why}) is an equality, then we must have $\epsilon_3 =\ldots = \epsilon_{n+1}=0$ and $u_1 = \ldots = u_{n+1}=u$.

If $2u-1\ge p$, then we have $\mathcal{E}_p(r_1, \ldots, r_{n+1}) \ge pq'=q$, and the desired conclusion holds. Otherwise, since $p$ is odd, we must have $2u-1 \le p-2$.
 Recalling the assumption that $r_n+r_{n+1} =2uq'+v_n+v_{n+1}\ge q$, it follows that $v_n + v_{n+1} \ge q'$, and therefore 
$\mathcal{E}_p(r_1, \ldots, r_{n+1})=(2u-1)q'+ v_1 + v_2 + \ldots + v_n + v_{n+1}  \ge 2uq'+v_1+v_2 =r_1+r_2$.

\smallskip

{\bf Case 2:} $\epsilon_1+ \epsilon_2=1$. Say that $\epsilon_1=1, \epsilon_2=0$ (the other case is similar). We first consider the case when $u_1+1=u_2=u_3+\epsilon_3=\ldots = u_{n+1}+\epsilon_{n+1}:=u$ and $\mathcal{E}(u_1+1, u_2, \ldots, u_{n+1}+\epsilon_{n+1})=2u-1$. We must have $\epsilon_3=\ldots = \epsilon_{n+1}=0$ and $u_2=\ldots = u_{n+1}=u$, $u_1=u-1$. 
If $2u-1\ge p$ then we have $\mathcal{E}_p(r_1, \ldots, r_{n+1}) \ge pq' = q$, and the desired conclusion holds. Otherwise, since $p$ is odd, we must have $2u-1\le p-2$. 
Recalling the assumption that $r_n + r_{n+1} \ge q$, it follows that $v_n + v_{n+1} \ge q'$. Thus we have
$$
\mathcal{E}_p(r_1, \ldots, r_{n+1}) = (2u-1)q' + v_2 + \ldots + v_n + v_{n+1} \ge 2uq' + v_2 \ge r_1 + r_2
$$
and the desired conclusion holds.
 
Now consider the case when inequality (\ref{this_is_why}) is strict when applied to $u_1+1, u_2, u_3+\epsilon_3, \ldots, u_{n+1}+\epsilon_{n+1}$, and therefore we have $\mathcal{E}(u_1+1, u_2, u_3+\epsilon_3, \ldots, u_{n+1} + \epsilon_{n+1}) \ge \mathrm{min} \{ u_i +\epsilon_i + u_j + \epsilon_j\}$.
If $u_1+1, u_2, u_3+\epsilon_3, \ldots, u_{n+1}+\epsilon_{n+1}$ are not all equal, then the smallest two are either $u_1+1, u_2$, or $u_2, u_j$ (where $u_j = u_1$ and $\epsilon_j =0$). If the smallest two are $u_1+1, u_2$ then we have
$\mathcal{E}_p(r_1, \ldots, r_{n+1})\ge (u_1+u_2+1)q' + v_2 \ge r_1 + r_2$. If the smallest two are $u_2, u_j$ (with $j$ as above) then 
$\mathcal{E}_p(r_1, \ldots, r_{n+1})\ge (u_2 + u_j)q' + v_2 + v_j$. But we have $u_j = u_1$ and $r_j \ge r_1$, and therefore $v_j \ge v_1$, which leads to the desired conclusion.

\smallskip

{\bf Case 3:} $\epsilon_1 = \epsilon_2 =1$. Then $\mathcal{E}_p(r_1, \ldots, r_{n+1})=\mathcal{E}_p(u_1+1, u_2 + 1, u_3 +\epsilon_3, \ldots, u_{n+1} + \epsilon_{n+1})q' + \sum_{i\ge 3, \epsilon_i = 0} v_i$.

If $u_1 + 1 = u_2 + 1 = u_3 + \epsilon_3 = \ldots = u_{n+1} + \epsilon_{n+1}=u$, then $\mathcal{E}_p(u_1+1, u_2 + 1, u_3 +\epsilon_3, \ldots, u_{n+1} + \epsilon_{n+1}) \ge 2u-1$. If $2u-1 \ge p$, it follows that $\mathcal{E}_p(r_1, \ldots, r_{n+1}) \ge q$, and we are done. Assume that $2u-1 \le p-1$. Since $p$ is odd, this means that $2u-1 \le p-2$. 
Note that this implies that $\epsilon_n =\epsilon_{n+1}=0$, since otherwise we would have $u_n+u_{n+1}\le 2u-1$, and $r_n + r_{n+1}\le (2u-1) q' + v_n + v_{n+1} < (p-2)q' + 2q' = q$, which contradicts our assumption.
Now we have $u_n =u_{n+1}=u$, and the assumption that $r_n + r_{n+1} \ge q$ implies that $2uq' + v_n + v_{n+1} \ge pq'$. Since $2u \le p-1$, it follows that $v_n + v_{n+1} \ge q'$, and we have
$\mathcal{E}_p(r_1, \ldots, r_{n+1})\ge (2u-1) q' + v_n + v_{n+1} \ge 2uq'= (u_1 + 1) q' + (u_2 + 1) q' \ge r_1 + r_2$.

If $u_1+1, u_2+1, u_3 + \epsilon_3, \ldots, u_{n+1} + \epsilon_{n+1}$ are not all equal, then the smallest two could be $u_1+1, u_2+1$, or $u_1 +1, u_j$ (where $u_j=u_2$ and $\epsilon_j =0$), or $u_j, u_l$ (where $u_j = u_1$, $u_l = u_2$, and $\epsilon_j = \epsilon_l =0$).
In the first case, it follows that $\mathcal{E}_p(r_1, \ldots, r_{n+1}) \ge (u_1 +1 + u_2 +1) q' \ge r_1 + r_2$.
In the second case, we have $\mathcal{E}_p(r_1, \ldots, r_{n+1}) \ge (u_1 + 1 + u_j) q' + v_j$, and since $u_j = u_2$ and $r_j \ge r_2$, it follows that $v_j \ge v_2$, which leads to $\mathcal{E}_p(r_1, \ldots, r_{n+1})\ge (u_1 +1) q' + u_2q' + v_ 2 \ge r_1 + r_2$.
In the third case, we have $\mathcal{E}_p(r_1, \ldots, r_{n+1}) \ge (u_j + u_l) q' + v_j + v_l$ and we must have $v_j \ge v_1, v_l \ge v_2$ for the same reason as above, implying that $\mathcal{E}_p(r_1, \ldots, r_{n+1})\ge u_1 q' + u_2 q' + v_1 + v_2 = r_1 + r_2$.

\medskip

Now we will consider all the remaining cases, when the result of Theorem (\ref{main_theorem}) cannot be applied to $r_1, \ldots, r_{n+1}$. This is the case when $u_{n+1} \ge \sum_{i=1}^n u_i  - n +2$, or when $r_i < q'$ for some $i$ (and thus $u_i=0$). The idea for the cases when all $u_i \ge 1$ is to use Lemma (\ref{inequalities}) to replace $u_{n+1}q'+v_{n+1}$ by $u_{n+1}q'$, and then apply Corollary (\ref{Cor}). By Lemma (\ref{inequalities}), we have
\begin{equation}\label{case}
\mathcal{E}_p(r_1, \ldots, r_{n+1}) \ge \mathcal{E}_p(r_1, \ldots, r_n, u_{n+1}q') = \mathcal{E}_p(u_1+ \epsilon_1, \ldots, u_n + \epsilon_n, u_{n+1}) q' + \sum_{i\le n, \epsilon_i=0} v_i
\end{equation}
for some choice of $\epsilon_1, \ldots, \epsilon_n$. 

We distinguish four cases.

{\bf Case 1:}  $u_{n+1} = \sum_{i=1}^n u_i -n +2$ and all $u_i \ge 1$. Note that this implies that $\mathcal{E}(u_1+ \epsilon_1, \ldots, u_{n+1})= u_{n+1}$ if $\sum_{i=1}^{n+1} \epsilon_i \le 1$, and $\mathcal{E}(u_1+ \epsilon_1, \ldots, u_{n+1}+\epsilon_{n+1}) \ge u_{n+1} +1$ when $\sum_{i=1}^{n+1}\epsilon_i \ge 2$.  Also note that we have $u_{n+1} \ge u_1 + u_2$.

We distinguish three possibilities:

{\bf Subcase 1:} $u_{n+1}=u_1+u_2$. Note that this only happens when $u_1= \ldots =u_n=1$, $u_{n+1}=2$. Since we are assuming that $r_n + r_{n+1} \ge q$, this can only happen when $p=3$ (since $q \le r_n + r_{n+1} \le 3q' + r_n + r_{n+1} < 5q'$ implies $p<5$).
If $\sum_{i=1}^n \epsilon_i \le 1$, it follows that $\sum_{i\le n, \epsilon_i=0} v_i$ from equation (\ref{case}) consists of at least two terms, and since $u_1=\ldots= u_n$, it follows that $v_1 \le v_2 \le \ldots \le v_n$, and therefore $\sum_{i\le n, \epsilon_i=0} v_i \ge v_1 + v_2$, and therefore $\mathcal{E}_p(r_1, \ldots, r_{n+1}) \ge u_{n+1}q' + v_1 + v_2 = r_1 + r_2$, as desired.

If $\sum_{i=1}^n \epsilon_i \ge 2$, then we have $\mathcal{E}_p(u_1+\epsilon_1, \ldots, u_n + \epsilon_n, u_{n+1})\ge u_{n+1} + 1=3$, and since $p=3$, it follows that $\mathcal{E}_p(r_1, \ldots, r_{n+1}) \ge 3q' = q$, as desired.

{\bf Subcase 2:} $u_{n+1} = u_1 + u_2 +1$. This happens when $u_n=2$, $u_{n-1}=\ldots =u_1=1$, or when $n=3$, $u_3=2$, and $u_1, u_2 \in \{1, 2\}$. If $\sum_{i=1}^n \epsilon_i \le 1$, then $\sum_{i\le n, \epsilon_i=0} v_i$ in equation (\ref{case}) consists of at least two terms, and at least one of them is $\ge \mathrm{min}\{v_1, v_2\}$ (when $n=3$ then one of the two terms must be $v_1$ or $v_2$; when $n \ge 4$ we must have $v_1 \le v_2 \le \ldots \le v_{n-1}$ and at least one of the two terms is not $v_n$). It follows that $\mathcal{E}_p(r_1, \ldots, r_{n+1}) \ge u_{n+1} q' + \mathrm{min}(v_1, v_2) = (u_1+u_2+1)q' + \mathrm{min}(v_1, v_2) \ge r_1 + r_2$ as desired. If $\sum_{i=1}^n \epsilon_i \ge 2$, then $\mathcal{E}_p(u_1+ \epsilon_1, \ldots, u_n + \epsilon_n, u_{n+1}) \ge u_{n+1} + 1$, and it follows that $\mathcal{E}_p(r_1, \ldots, r_{n+1}) \ge (u_1 + u_2 +2) q' \ge r_1 + r_2$, as desired.

{\bf Subcase 3:} $u_{n+1} \ge u_1 + u_2 +2$. Then it follows that $\mathcal{E}_p(r_1, \ldots, r_{n+1}) \ge u_{n+1} q' \ge (u_1+u_2+2)q' \ge r_1 + r_2$.
\smallskip

{\bf Case 2:} $u_{n+1} = \sum_{i=1}^n u_i -n +3$ and all $u_i \ge 1$. Then $u_{n+1}= u_1 + u_2 +u_3 +(u_4-1) + \ldots + (u_n-1) \ge u_1 + u_2 + 1$, with equality if and only if $u_1=\ldots = u_n=1$, $u_{n+1}=3$. If the inequality is strict, then we have $\mathcal{E}_p(r_1, \ldots, r_{n+1})\ge \mathcal{E}_p(r_1, \ldots, r_n, u_{n+1}q') \ge u_{n+1}q' \ge (u_1 + u_2 + 2) q' \ge r_1 + r_2$. Assume that $u_{n+1}=u_1+u_2+1$. If $\sum_{i=1}^n \epsilon_i \le 2$ in equation (\ref{case}), then the sum $\sum_{i\le n, \epsilon_i=0} v_i$ consists of at least one term, and we get $\mathcal{E}_p(r_1, \ldots, r_{n+1}) \ge u_{n+1}q' + \mathrm{min}_{i=1}^n (v_i)=(u_1+u_2+1)q' + \mathrm{min}(v_1, v_2) \ge r_1 + r_2$.
If $\sum_{i=1}^n \epsilon _i \ge 3$, then we have $\mathcal{E}_p(u_1+ \epsilon_1, \ldots, u_n + \epsilon_n, u_{n+1})\ge u_{n+1}+1$, and it follows that $\mathcal{E}_p(r_1, \ldots, r_{n+1}) \ge (u_1 + u_2 +2)q' \ge r_1 + r_2$.

{\bf Case 3:} Now assume that $u_{n+1} \ge \sum_{i=1}^n u_i - n+4$ and all $u_i \ge 1$. Then $u_{n+1} \ge u_1 + u_2 +2$ and we have $\mathcal{E}_p(r_1, \ldots, r_{n+1}) \ge u_{n+1}q' \ge (u_1 + u_2 + 2)q' \ge r_1 + r_2$.

{\bf Case 4:} The last remaining case is when some of the $r_i$'s are less than $q'$. The proof will be on induction on the number $I$ of indexes $i$ such that $r_i<q'$, with $I=0$ being the base case. 
If $I=1$, say that $r_1<q', r_2, \ldots, r_{n+1} \ge q'$. By Lemma (\ref{inequalities}), we have 
$\mathcal{E}_p(r_1, \ldots, r_{n+1}) \ge \mathcal{E}_p(q', r_2, \ldots, r_{n+1}) - (q'-r_1)$. Using the result for $q', r_2, \ldots, r_{n+1}$ (which correspond to one of the cases that we already proved), we have $\mathcal{E}_p(r_1, \ldots, r_{n+1})  \ge \mathrm{min}\{q' + r_2 , q\} - (q'+r_1)$. If 
$\mathrm{min}\{q'+r_2, q\}=q'+r_2$, then we get $\mathcal{E}_p(r_1, \ldots, r_{n+1})\ge r_1 +r_2$ and we are done. Otherwise, we have $r_2 \ge q-q'$, and it follows that $u_2=\ldots u_{n+1}=p-1$. By Lemma (\ref{less_than_q}), we have
$$\mathcal{E}_p(r_1, r_2, \ldots, r_{n+1}) \ge \mathrm{min}\{\mathcal{E}_p(q', r_2, \ldots, r_n, (p-1)q'), (p-1)q'+\mathcal{E}_p(r_1, \ldots, r_n, v_{n+1})\}.$$ When $\mathrm{min}\{\mathcal{E}_p(q', r_2, \ldots, r_n, (p-1)q'), (p-1)q'+\mathcal{E}_p(r_1, \ldots, r_n, v_{n+1})\}=\mathcal{E}_p(q', r_2, \ldots, r_n, (p-1)q')$, we can apply the result to $q', r_2,\ldots, r_n, (p-1)q'$ (which correspond to one of the cases that we already proved) and we get $\mathcal{E}_p(r_1, \ldots, r_{n+1})\ge q$. When $\mathrm{min}\{\mathcal{E}_p(q', r_2, \ldots, r_n, (p-1)q'), (p-1)q'+\mathcal{E}_p(r_1, \ldots, r_n, v_{n+1})\}=(p-1)q'+ \mathcal{E}_p(r_1, \ldots, r_n, v_{n+1})$, note that this is $\ge (p-1)q'+r_n \ge 2(p-1)q' \ge pq'=q$.

Say that $I\ge 2$ and we may assume without loss of generality that $r_1, r_2 < q'$ and $r_{n+1} \ge q'$ ($r_{n+1}$ must be $\ge  q'$, since $r_n + r_{n+1} \ge q$). By Lemma (\ref{less_than_q}), we have 
$\mathcal{E}_p(r_1, \ldots, r_{n+1}) \ge \mathcal{E}_p(q', r_2, \ldots, r_n, u_{n+1}q')$, or $\mathcal{E}_p(r_1, \ldots, r_{n+1})\ge u_{n+1}q'+ \mathcal{E}_p(r_1, r_2, \ldots, r_n, v_{n+1})$. In the first case we can apply the inductive hypotheses (for $I-1$) to the $(n+1)$-tuple $q', r_2, \ldots, r_n, u_{n+1}q'$. In the second case, note that $\mathcal{E}_p(r_1, r_2, \ldots, r_n, v_{n+1}) \ge r_1$, therefore we obtain $\mathcal{E}_p(r_1, \ldots, r_{n+1})\ge u_{n+1}q'+ r_1 \ge r_1 + r_2$.

\end{proof}

{\bf The main body of the proof for the inequality $\ge $ in Theorem (\ref{main_theorem})}

First we prove the result for $p=2$. In this case, the only choice for $k_1, \ldots, k_{n+1}$ that satisfy condition (\ref{cond}) is $k_1 =\ldots = k_{n+1}=1$, and therefore we have $\mathcal{E}_p(k_1, \ldots, k_{n+1})=1$ and $\mathcal{E}_p(k_1+ \epsilon_1, \ldots, k_{n+1}+\epsilon_{n+1})=2$ for any choice of $(\epsilon_1, \ldots, \epsilon_{n+1}) \in \{0, 1\}^{n+1}$ that are not all equal to zero.Moreover, note that these are the smallest degree of any non-zero relations on $x^{k_1+\epsilon_1}, \ldots, x_n^{k_n + \epsilon_n}, f^{k_{n+1}}$, whether Koszul or not.
Thus, we have 
$$
{\mathcal Min}(k_1q+r_1, \ldots, k_{n+1}q+r_{n+1})=\mathrm{min}\{q + \sum_{i=1}^{n+1} r_i, 2q\}.
$$
According to Lemma (\ref{Lem}), a relation ${\mathcal A}=(a_1, \ldots, a_{n+1})^t$ on
\newline  $x_1^{k_1q+r_1}, \ldots, x_n^{k_nq+r_n}, f^{k_{n+1}q+r_{n+1}}$ restricts to a relation ${\mathcal A'}$ on $x_1^{k_1q}, \ldots, x_n^{k_nq}, f^{k_{n+1}q}$ of the form given by equation (\ref{sum}). If $(a_{1, M, l}, \ldots, a_{n+1, M, l})^t$ is a relation on $x_1^{k_1+\epsilon_1},\ldots, x_n^{k_n+\epsilon_n}, f^{k_{n+1}}$ with $\epsilon_1, \ldots, \epsilon_n \in \{0, 1\}$ not all equal to zero, then we have $\mathrm{deg}({\mathcal A}) \ge q \mathcal{E}_p(k_1+\epsilon_1, \ldots, k_n + \epsilon_n, k_{n+1})=2q$, and the desired conclusion holds.
Thus we may assume without loss of generality that the only terms in the sum (\ref{sum}) correspond to $M=x_1^{r_1}\cdots x_n^{r_n}$, and $(a_{1, M, l}, \ldots, a_{n+1, M, l})^t$ are non-Koszul relations on $x_1, \ldots, x_{n+1}, f$. Note that the only such relation is $(1, \ldots, 1, -1)^t$. The assumption that the relation $\mathcal{A'}$ is restricted from a relation on 
$x_1^{k_1q+r_1}, \ldots, x_n^{k_nq+r_n}, f^{k_{n+1}q+r_{n+1}}$
implies that $\sum_{M} g_{M, l}Ma_{n+1, M, l}^q$ is divisible by $f^{r_{n+1}}$. Since the sum consists of a single term, we can write $g:=g_{M, l}$, and we have $gx_1^{r_1}\cdots x_n^{r_n}\in (f^{r_{n+1}})$. This implies $g \in (f^{r_{n+1}})$, and therefore $\mathrm{deg}(\mathcal{A})= \mathrm{deg}(g) + \mathrm{deg}(x_1^{r_1} \cdots x_n^{r_n})+ q\, \mathcal{E}_p(k_1, \ldots, k_n, k_{n+1})\ge q + \sum_{i=1}^{n+1} r_i$.

From this point on we will assume $p\ge 3$.

Let $E:=\mathcal{E}_p(k_1, \ldots, k_{n+1})=\mathrm{min}\left( \lceil \frac{\sum_{i=1}^{n+1} k_i - n +1 }{2}\rceil, p\right)$. By the discussion following Lemma (\ref{char0}), we see that equation (\ref{ch0eq}) also applies to $k_1+\epsilon_1, \ldots, k_{n+1}+\epsilon_{n+1}$ for every $(\epsilon_1, \ldots, \epsilon_{n+1}) \in \{0, 1\}^{n+1}$, so we have
\begin{equation}\label{case1}
\mathcal{E}_p(k_1+ \epsilon_1, \ldots, k_{n+1}+\epsilon_{n+1})=\mathrm{min}(E+\lfloor \frac{I}{2}\rfloor, p)
\end{equation}
if $\sum_{i=1}^{n+1}k_i -n+1$ is odd, or 
\begin{equation}\label{case2}
\mathcal{E}_p(k_1+ \epsilon_1, \ldots, k_{n+1}+\epsilon_{n+1})=\mathrm{min}(E+\lceil \frac{I}{2}\rceil, p)
\end{equation}
if $\sum_{i=1}^{n+1}k_i-n+1$ is even, 
where $I=\sum_{i=1}^{n+1} \epsilon_i$.

Assume that we are in the case when equation (\ref{case1}) holds. The other case is similar. Then

 \begin{equation}\label{minimum}
{\mathcal Min}(k_1q+r_1, \ldots, k_{n+1}q+r_{n+1})=\mathrm{min}\{Eq + \lfloor \frac{I}{2} \rfloor q+ r_{i_1} + \ldots + r_{i_{n-I+1}} , pq\}
\end{equation}
where $I$ ranges through integers from $0$ to $n+1$ and $1 \le i_1 < \ldots < i_{n-I+1} \le n+1$  (note that the minimum will be achieved for an odd value of $I$).

Due to symmetry, we may assume with no loss of generality that $r_{n+1} \ge r_i$ for all $i \in \{1, \ldots, n\}$.

According to Lemma (\ref{Lem}), a relation ${\mathcal A}=(a_1, \ldots, a_{n+1})^t$ on
\newline  $x_1^{k_1q+r_1}, \ldots, x_n^{k_nq+r_n}, f^{k_{n+1}q+r_{n+1}}$ restricts to a relation ${\mathcal A'}$ on $x_1^{k_1q}, \ldots, x_n^{k_nq}, f^{k_{n+1}q}$ of the form given by equation (\ref{sum}).  Moreover, by Lemma (\ref{Cor}), we may assume that the relation $\mathcal{A'}$ is not Koszul. 
It follows that at least one of the terms of the sum in equation (\ref{sum}) is such that $(a_{1, M, l}, \ldots, a_{n+1, M, l})^t$ is  a non-Koszul relation on $x_1^{k_1+ \epsilon_1}, \ldots, x_n^{k_n + \epsilon_n}, f^{k_{n+1}}$,
 and therefore $\mathrm{deg}(a_{n+1, M, l})+k_{n+1}\ge \mathcal{E}_p(k_1+\epsilon_1, \ldots, k_n+\epsilon_n, k_{n+1})$ where $M=x_{i_1}^{r_{i_1}}\cdots x_{i_s}^{r_{i_s}}$ and $\epsilon_i=0 \Leftrightarrow i\in \{i_1, \ldots, i_s\}$.

 The assumption that ${\mathcal A'}$ is restricted from a relation on \newline 
$x_1^{k_1q+r_1}, \ldots, x_n^{k_nq+r_n}, f^{k_{n+1}q+r_{n+1}}$ means that $\sum_{M, l} g_{M, l} M a_{n+1, M, l}^q$ is divisible by $f^{r_{n+1}}$. We view $k[x_1, \ldots, x_n]$ as a free module over $k[x_1^q, \ldots, x_n^q]$ with basis consisting of the monomials $x_2^{i_2} \cdots x_n^{i_n} f^{i_{n+1}}$ with $0 \le i_2, \ldots, i_{n+1}\le q-1$. We see that every polynomial divisible by $f^{r_{n+1}}$ can be written 
in the form $\sum a_M M$ with $M$ ranging through monomials of the form $x_2^{i_2} \cdots x_n^{i_n}f^{i_{n+1}}$ as above, $a_M \in k[x_1^q, \ldots, x_n^q]$, and for each $M$ we have either $i_{n+1}\ge r_{n+1}$ or $a_M$ is divisible by $f^q$.
 We may assume without loss of generality that each entry $a_{n+1, M, l}$ of a non-Koszul relation from equation (\ref{sum}) is not divisible by $f$. Otherwise, the relation $(a_{1, M, l}, \ldots, a_{n+1, M, l})^t$ would be restricted from a relation on $x_1^{k_1 + \epsilon_1}, \ldots, x_n^{k_n + \epsilon_n}, f^{k_{n+1}+1}$, and we would have 
$\mathrm{deg}(a_{n+1, M, l}) + k_{n+1} \ge \mathcal{E}_p(k_1+ \epsilon_1, \ldots, k_n + \epsilon_n, k_{n+1}+1)$, which implies
$\mathrm{deg}(\mathcal{A}) =\mathrm{deg}(M)+ q(\mathrm{deg}(a_{n+1, M, l})+k_{n+1}) \ge {\mathcal Min}(d_1, \ldots, d_{n+1})$, as desired.

We will focus on the terms in the sum in equation (\ref{sum}) that correspond to non-Koszul relations and have minimal value for $I:=\sum_{i=1}^n \epsilon_i$ (and therefore minimal degree for $a_{n+1, M, l}$). 
Let $M_1, \ldots, M_N$ be all the monomials in (\ref{sum}) that are associated to these terms. Assume that $M_1=x_1^{r_1}x_{i_2}^{r_{i_2}}\cdots x_{i_s}^{r_{i_s}}$. Let $\{j_1, \ldots, j_t\}:=\{1, \ldots, n\}\, \backslash \, \{1, i_2, \ldots, i_s\}$. We may assume that the sum on the right hand side of equation (\ref{sum}) was written in such a way that it contains the fewest possible number of terms that involve the monomial $M_1$ and correspond to non-Koszul relations on $x_1^{k_1+\epsilon_1}, \ldots, x_n^{k_n + \epsilon_n}, f^{k_{n+1}}$ (where $\epsilon _i =0 \Leftrightarrow i \in \{1, i_2, \ldots, i_s\}$). 

Note that  $M_2, \ldots, M_N \in (x_{j_1}^{r_{j_1}}, \ldots, x_{j_t}^{r_{j_t}})$ (since they involve the same number of variables as $M_1$, they must involve at least one variable not in $M_1$).  Say that the coefficients of the terms corresponding to $M_1$ in the sum (\ref{sum}) are $g_{M_1, 1}, \ldots, g_{M_1, \lambda}$.  Fix a monomial $\nu$ from the expansion of $g_{M_1, 1}M_1$ using $x_2, \ldots, x_n, f$ as variables.
We claim that either $\nu \in (x_{j_1}^{r_{j_1}}, \ldots, x_{j_y}^{r_{j_t}}, x_1^q, x_{i_2}^q, \ldots, x_{i_s}^q, f^{r_{n+1}})$, or else the conclusion of the theorem holds.

Assume $\nu \notin (x_1^q, \ldots, x_n^q)$.  Then it is part of the basis for $k[x_1, \ldots, x_n]$ as a free $k[x_1^q, \ldots, x_n^q]$-module. According to the discussion above, one of the following situations must occur: $\nu$ is divisible by $f^{r_{n+1}}$, or $a_{n+1, M_1, 1}$ is divisible by $f$, or $\nu a_{n+1, M_1, 1}^q$ must cancel out in the sum $\sum_{M, l} g_{M, l} M a_{n+1, M, l}^q$ . Since the claim holds in the first situation, and the second situation has been eliminated in the discussion above, we may assume that the third possiblity occurs.

Now we consider the other terms of the sum $\sum_{M, l} g_{M, l} M a_{n+1, M, l}^q$ such that the monomial $\nu $ appears in the expansion of $g_{M, l}M$ after possibly dividing out $q$th powers (so we are looking for monomials in the expansion of a $g_{M, l} M$ that are of the form ($q$th power) $\cdot \nu$). First assume that $\nu $ appears in a term that corresponds to a non-Koszul relation $(a_{1, M}, \ldots, a_{n+1}, M)^t$ and $M \ne M_1$.
If  $M\notin \{M_1, \ldots, M_N\}$ (so that $M$ has $\sum_{i=1}^n \epsilon'_i>I$), then we claim that the conclusion of the theorem follows.  Indeed, in this case we have $\mathrm{deg}(g_{M_1, 1}M_1)\le \mathrm{deg}(g_{M, l} M)$, and therefore $\mathrm{deg}(a_{n+1, M_1, 1})\ge \mathrm{deg}(a_{n+1, M, l})$. Since we have $k_{n+1}+\mathrm{deg}(a_{n+1, M, l})=\mathcal{E}_p(k_1+\epsilon'_1, \ldots, k_n+\epsilon'_n, k_{n+1})\ge (\mathrm{min}\{E+\lfloor \frac{I+1}{2}\rfloor , p\})q$, it follows that $\mathrm{deg}(\mathcal{A})\ge \mathrm{deg}(M_1) + (\mathrm{min}\{E+\lfloor \frac{I+1}{2}\rfloor , p\})q\ge {\mathcal Min}(d_1, \ldots, d_{n+1})$, which is the desired conlcusion.
 If $M\in \{M_2, \ldots, M_N\}$, then the term from the expansion of $g_M M$ must be $\nu$ rather than a ($q$th power)$\cdot \nu$ (because in this case $g_M M $ and $g_{M_1} M_1$ must have the same degree), and since $M\in (x_{j_1}^{r_{j_1}}, \ldots, x_{j_t}^{r_{j_t}})$ it follows that $\nu \in (x_{j_1}^{r_{j_1}}, \ldots, x_{j_t}^{r_{j_t}})$, and the claim holds.

  Now assume that none of the previous possibilities holds, and therefore the term $\nu a_{n+1, M_1, 1}$ must cancel with terms in the sum $\sum_{M, l} g_{M, l} M a_{n+1, M, l}^q$ that have $M=M_1$ or correspond to Koszul relations $(a_{1, M, l}, \ldots, a_{n+1, M, l})^t$.

 Say that $g_{M_1, 1}M_1, \ldots, g_{M_1, s}M_1$ are all the terms corresponding to non-Koszul relations where the monomial $\nu$ occurs with coefficients $\alpha^q_1, \ldots, \alpha^q_s \in k$ (using the assumption that $k$ is a perfect field).  Moreover, let $N_1, \ldots, N_u$ be the monomials corresponding to Koszul relations such that  (a $q$th power) $\cdot \nu $ is a term in the expansion of a $g_N N$, and let $\beta_1^q, \ldots, \beta _s^q$ be the corresponding coefficients. 
 The assumption that the terms containing this monomial cancel
in the sum (\ref{sum}) implies that  $\alpha_1 a_{n+1, M_1, 1}=\alpha_2 a_{n+1, M_1, 2}+ \ldots + \alpha_s a_{n+1,M_1, s}+v$, where 
$v=\sum_{m=1}^u \beta _m v_ma_{n+1, N_m}$ is the term coming from the Koszul relations. Specifically, $v_m$ is such that $g_{N_m}N_m$ has a term equal to $v_m^q \nu $ in its expansion, and $a_{n+1, N_m}\in (x_1^{k_1+\epsilon_{1, m}}, \ldots, x_n^{k_n+\epsilon_{n, m}})$ is the last entry of a relation  restricted from a Koszul relation on $x_1^{k_1+\epsilon_{1, m}}, \ldots, x_n^{k_n + \epsilon_{n, m}}, f^{k_{n+1}}$ where $\epsilon_{i, m}=0$ if and only if $x_i^{r_i}$ is a factor in $N_m$.
Note that $v_m (a_{1, N_m}, \ldots, a_{n+1, N_m})^t$ is a relation on o$x_1^{k_1}, \ldots, x_n^{k_n}, f^{k_{n+1}}$ which is restricted from a relation on $x_1^{k_1+\epsilon_1}, \ldots, x_n^{k_n + \epsilon_n}, f^{k_{n+1}}$ (recall that
 $\epsilon_i =0 \Leftrightarrow i \in\{1, i_2, \ldots, i_s\}$). 
To see this, say that $ N_m=x_{l_1}^{r_{l_1}}\cdots x_{l_v}^{r_{l_v}}$.
For every $j \in \{1, \ldots, n\} \, \backslash \, \{1, i_2, \ldots, i_s\}$, if $j \notin \{l_1, \ldots, l_v\}$  we have $\epsilon_{j, m}=1$, and if $j\in \{l_1, \ldots, l_v\}$ then we must have $v_m$ divisible by $x_j$ (since $N_m g_{N_m}$ is divisible by $x_j^{r_j}$ but $\nu$ is not).

 It follows that the relation 
$(a_{1, M_1, 1}, \ldots, a_{n+1, M_1, 1})^t$ from equation (\ref{sum}) can be replaced by 
$$
\sum_{j=2}^s \frac{\alpha_j}{\alpha _1} \left( \begin{array}{c} a_{1, M_1, j} \\ \vdots \\ a_{n+1, M_1, j}\\ \end{array}\right) + K
$$
where $K$ is a relation on $x_1^{k_1}, \ldots, x_n^{k_n}, f^{k_{n+1}}$ which is restricted from a Koszul relation on $x_1^{k_1+ \epsilon_1}, \ldots, x_n^{k_n + \epsilon_n}, f^{k_{n+1}}$. This contradicts the assumption that the sum in equation (\ref{sum}) was written so that it contains the fewest possible number of terms corresponding to non-Koszul relations that involve the monomial $M_1$. This concludes the proof of the claim.

Consider the case when $\{i_2, \ldots, i_s\} \ne \emptyset$ (in other words the monomial $M_1$ consists of more than a single variable).

Rename $g:=g_{M_1, 1}$. Since $x_2, \ldots, x_n, f$  is a regular sequence, $$gM_1 \in (x_{j_1}^{r_{j_1}}, \ldots, x_{j_t}^{r_{j_t}}, x_1^q, x_{i_2}^q, \ldots, x_{i_s}^q, f^{r_{n+1}})$$ implies that  
\newline 
$gx_1^{r_1} \in (x_{j_1}^{r_{j_1}}, \ldots, x_{j_t}^{r_{j_t}}, x_{i_2}^{q-r_{i_2}}, \ldots, x_{i_s}^{q-r_{i_s}}, f^{r_{n+1}})$ (note that $x_1^q$ is redundant among the generators of the ideal on the right hand side and can be omitted). We view this as a relation on $x_1^{s_1}, \ldots, x_n^{s_n}, f^{s_{n+1}}$ where $s_i= r_i $ for $i \in \{1, j_1, \ldots, j_t, n+1\}$ and $s_i =q-r_i$ for $i \in \{ i_2, \ldots, i_s\}$.
This implies either $g \in (x_{j_1}^{r_{j_1}}, \ldots, x_{j_t}^{r_{j_t}}, x_{i_2}^{q-r_{i_2}}, \ldots, x_{i_s}^{q-r_{i_s}}, f^{r_{n+1}})$ if the above is a Koszul relation, or
$\mathrm{deg}( g)+ r_1 \ge \mathcal{E}_p(s_1, \ldots, s_n, s_{n+1})$ otherwise.  In the first case, we have $\mathrm{deg}(g) \ge r_l$ for some $l \in \{j_1, \ldots, j_t, n+1\}$ or $\mathrm{deg}(g) \ge q-r_l$ for some $l \in \{i_2, \ldots, i_s\}$. In the second case we have $\mathrm{deg}(g) + r_1 \ge \mathrm{min}(s_i + s_j, q)$.

We have $\mathrm{deg}(\mathcal{A}) = \mathrm{deg}(M_1)+ \mathrm{deg}(g) + q(\mathrm{deg}(a_{M_1, n+1, 1}+k_{n+1})$, and $\mathrm{deg}(a_{M_1, n+1, 1}+k_{n+1})=\mathcal{E}_p(k_1+ \epsilon_1, \ldots, k_n+\epsilon_n, k_{n+1})=\mathrm{min}\{E+ \lfloor \frac{I}{2}\rfloor, p\}$, where $\epsilon_i=1$ for $i \in \{j_1, \ldots, j_t\}$ and $I=t=n-s$. Thus $\mathrm{deg}(\mathcal{A})=r_1 + r_{i_2} + \ldots + r_{i_s} + \mathrm{deg}(g) + q \mathrm{min}\{E+\lfloor \frac{I}{2}\rfloor, p\} $.
We distinguish six possible cases based on the discussion above. We refer to equation (\ref{minimum}) for explanation of the last inequality in each of the cases below.

{\bf 1.} $\mathrm{deg}(g) \ge r_j$ for some $j \in \{j_1, \ldots, j_t, n+1\}$. Then we have $\mathrm{deg}(\mathcal{A})\ge r_1 + r_{i_2} + \ldots + r_{i_s} + r_j +\mathrm{deg}(g) +q\,  \mathrm{min}\{E+\lfloor \frac{I}{2}\rfloor, p\} \ge {\mathcal Min}(d_1, \ldots, d_{n+1})$ since the number of $r_i$'s in the sum is $s+1= n-I+1$.

{\bf 2.} $\mathrm{deg}(g) \ge q-r_i$ for some $i \in \{i_2, \ldots, i_s\}$. We may assume without loss of generality that $i=i_s$. Then we have 
$\mathrm{deg}(\mathcal{A})\ge r_1 + r_{i_2} +  \ldots + r_{i_{s-1}} + q + q\,  \mathrm{min}\{E+ \lfloor \frac{I}{2}\rfloor , p\}  \ge  r_1 + r_{i_2} +  \ldots + r_{i_{s-1}}+q\, \mathrm{min}\{E+\lfloor \frac{I+2}{2}\rfloor , p\} \ge {\mathcal Min}(d_1, \ldots, d_{n+1})$, since the number of $r_i$'s in the sum is $s-1=n-(I+2)+1$.

{\bf 3.} $\mathrm{deg}(g)+r_1 \ge r_j + r_{j'}$ for some $j, j' \in \{j_1, \ldots, j_t, n+1\}$. Then we have $\mathrm{deg}(\mathcal{A})\ge  r_{i_2} + \ldots + r_{i_s} + r_j + r_{j'} + q\,  \mathrm{min}\{E+\lfloor \frac{I}{2} \rfloor , p\} \ge {\mathcal Min}(d_1, \ldots, d_{n+1})$ since the number of $r_i$'s in the sum is $s+1 = n-I +1$.

{\bf 4.} $\mathrm{deg}(g)+r_1 \ge r_j + (q-r_i)$ for some $j \in \{j_1, \ldots, j_t, 1, n+1\}$, $i\in \{i_2, \ldots, i_s\}$. Assume $i=i_s$. Then we have 
$\mathrm{deg}(\mathcal{A})\ge r_{i_2} + \ldots + r_{i_{s-1}} +r_j + q + q \mathrm{min}\{E+\lfloor \frac{I}{2}\rfloor, p\}  \ge  r_{i_2} + \ldots + r_{i_{s-1}} +r_j+q \mathrm{min}\{E+\lfloor \frac{I+2}{2}\rfloor, p\}  \ge {\mathcal Min}(d_1, \ldots, d_{n+1})$ because the number of $r_i$'s in the sum is $s-1=n-(I+2)+1$.

{\bf 5.} $\mathrm{deg}(g)+r_1 \ge (q-r_i)+(q-r_{i'})$ for some $i, i'\in \{i_2, \ldots, i_s\}$. Assume $\{i, i'\}=\{s-1, s\}$. Then we have 
$\mathrm{deg}(\mathcal{A})\ge r_{i_2} + \ldots + r_{i_{s-2}} + 2q + q \, \mathrm{min}\{E+\lfloor \frac{I}{2} \rfloor, p\}  \ge  r_{i_2} + \ldots + r_{i_{s-2}}+q \mathrm{min}\{E+\lfloor \frac{I+4}{2}\rfloor, p\} \ge {\mathcal Min}(d_1, \ldots, d_{n+1})$, because the number of $r_i$'s in the sum is $s-3=n-(I+4)+1$.

{\bf 6.} $\mathrm{deg}(g)+r_1 \ge q$. Then we have $\mathrm{deg}(\mathcal{A})\ge r_{i_2} + \ldots + r_{i_{s}}+ q+q \, \mathrm{min}\{E+\lfloor \frac{I}{2} \rfloor, p\}  \ge r_{i_2} + \ldots + r_{i_s} +q \, \mathrm{min}\{E+\lfloor \frac{I+2}{2} \rfloor, p\}\ge {\mathcal Min}(d_1, \ldots, d_{n+1})$, because the number of $r_i$'s in the sum is $s-1= n-(I+2)+1$.

Now consider the case when $M_1=x_1^{r_1}$. Then we have $g x_1^{r_1} \in (x_2^{r_2}, \ldots, x_n^{r_n}, f^{r_{n+1}})$. If 
$g \in (x_2^{r_2}, \ldots, x_n^{r_n}, f^{r_{n+1}})$, then we proceed as in case {\bf 1.} above. Otherwise, we have
$\mathrm{deg}(M_1)+r_1 \ge {\mathcal E}(r_1, r_2, \ldots, r_{n+1})$. We may assume without loss of generality that $\mathrm{max}\{r_i + r_j\}= r_n + r_{n+1}$.
If $r_n + r_{n+1} \ge q$, then we can use Lemma (\ref{remainders}) and proceed as in case {\bf 3.} or case {\bf 6.} above. Assume that $r_n + r_{n+1}  < q$ (so that Lemma (\ref{remainders}) cannot be applied).
Note that we have $$\mathrm{deg}({\mathcal A}) = \mathrm{deg}(g)+r_1 + \mathcal{E}_p(k_1, k_2+1, \ldots, k_n+1, k_{n+1}) \ge$$
$$ \mathcal{E}_p(r_1, \ldots, r_{n+1})+Eq +q \, \mathrm{min}\{\lfloor \frac{n-1}{2} \rfloor, p\} . $$ Choose $u=q-(r_n+r_{n+1}) >0$ and note that we have 
$\mathcal{E}_p(r_1, r_2, \ldots, r_{n+1}) \ge \mathcal{E}_p(r_1 , r_2, \ldots, r_{n+1}+u)-u$ from Lemma (\ref{inequalities}). Note that $r_{n+1} + u \le q$, $r_n+ r_{n+1}+u = q$, and we can apply Lemma (\ref{remainders}) to the $(n+1)$-tuple $(r_1, r_2, \ldots, r_{n+1}+u)$. We have
$$\mathcal{E}_p(r_1, \ldots, r_{n+1}) \ge \mathrm{min}\{ r_i + r_j\}-q+r_n+r_{n+1}.$$ It follows that 
$$\mathrm{deg}({\mathcal A}) \ge q \, \mathrm{min}\{E+ \lfloor \frac{n-1}{2} \rfloor,  p\}  -q + r_i + r_j + r_n + r_{n+1}\ge
$$
$$
q\, \mathrm{min}\{E + \lfloor \frac{n-3}{2}\rfloor, p\}  + r_i + r_j + r_ n + r_{n+1}.$$
We see that $\mathrm{deg}(\mathcal{A}) \ge {\mathcal Min}(d_1, \ldots, d_{n+1})$ by choosing $I=n-3$ in equation (\ref{minimum}).

\section{Diagonal F-thresholds}
Let $(A, m)$ denote a standard graded Artinian local ring with maximal homogeneous ideal $m$. The top socle degree of $A$ is the largest degree of a nonzero element of $A$. 

We observe that knowing $\mathcal{E}_p(d_1, \ldots , d_{n+1})$ for every $d_1, \ldots, d_{n+1}$ will allow us to find the top socle degree of any ring of the form
$$
R=\frac{k[x_1, \ldots,x_{n+1}]}{(x_1^{K_1}, \ldots, x_{n+1}^{K_{n+1}}, x_1^a + \ldots + x_{n+1}^a)}
$$
(note that $x_{n+1}$ denotes a variable; we are {\bf no longer} using the convention $x_{n+1} = x_1 + \ldots x_n$ that was in effect in the earlier sections).
Let the top socle degree of this ring be denoted tsd$(K_1, \ldots, K_{n+1}; a)$.
Of particular interest is the case $K_1 = \ldots = K_{n+1}=q=p^e$, in which case we will be finding the top socle degrees of Frobenius powers of the maximal ideal in a diagonal hypersurface ring.

\begin{theorem}\label{socle_degrees}
Let $K_1, \ldots, K_{n+1}, a >0$ be integers. Write $K_i = ad_i + e_i$ with $0 \le e_i \le a-1$.
With notation as above, we have
$$
\mathrm{tsd}(K_1, \ldots, K_{n+1}; a)=
$$
$$
\mathrm{max}\left\lbrace a\left( \sum_{i=1}^{n+1} (d_i + \epsilon _i -1) -\mathcal{E}_p(d_1 + \epsilon _1, \ldots, d_{n+1}+\epsilon _{n+1})+1\right) 
  + \sum_{i=1}^{n+1} g_i\right\rbrace 
$$
where the maximum is taken over all choices of $(\epsilon_1, \ldots, \epsilon_{n+1}) \in \{0, 1\}^{n+1}$, $g_i=a-1$ when $\epsilon _i =0$ and $g_i = e_i -1$ when $\epsilon _i =1$.
\end{theorem}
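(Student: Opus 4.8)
The plan is to use that $L := x_1^a + \ldots + x_{n+1}^a$ lies in the polynomial subring $P := k[x_1^a, \ldots, x_{n+1}^a] \subseteq k[x_1, \ldots, x_{n+1}]$, over which the whole polynomial ring is free on the homogeneous basis $\{x_1^{j_1}\cdots x_{n+1}^{j_{n+1}} : 0 \le j_i \le a-1\}$. First I would decompose $S := k[x_1,\ldots,x_{n+1}]/(x_1^{K_1},\ldots,x_{n+1}^{K_{n+1}}) = \bigotimes_{i=1}^{n+1} k[x_i]/(x_i^{K_i})$ as a graded $P$-module. For each $i$, since $K_i = ad_i + e_i$ with $0 \le e_i \le a-1$, the cyclic $k[x_i^a]$-submodule generated by $x_i^{j_i}$ inside $k[x_i]/(x_i^{K_i})$ is isomorphic to $k[x_i^a]/(x_i^a)^{d_i+1}$ if $j_i < e_i$ and to $k[x_i^a]/(x_i^a)^{d_i}$ if $j_i \ge e_i$. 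Tensoring over $i$ gives a graded (in particular $k$-linear) direct sum decomposition $S = \bigoplus_{\mathbf{j} \in \{0,\ldots,a-1\}^{n+1}} M_{\mathbf{j}}$, where $M_{\mathbf{j}}$, after rescaling the grading by $a$ and shifting degrees up by $j_1 + \ldots + j_{n+1}$, is a copy of $k[y_1,\ldots,y_{n+1}]/(y_1^{c_1},\ldots,y_{n+1}^{c_{n+1}})$ (standard grading) with $c_i = c_i(j_i) := d_i + 1$ when $j_i < e_i$ and $c_i := d_i$ otherwise.

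Since $L \in P$, multiplication by $L$ preserves this decomposition, so $R = S/(L) = \bigoplus_{\mathbf{j}} M_{\mathbf{j}}/L\,M_{\mathbf{j}}$ as graded vector spaces, and because the top socle degree of an Artinian graded algebra is by definition the top nonvanishing degree, $\mathrm{tsd}(R) = \max_{\mathbf{j}} \mathrm{tsd}(M_{\mathbf{j}}/L\,M_{\mathbf{j}})$. Next I would identify $\mathrm{tsd}(M_{\mathbf{j}}/L\,M_{\mathbf{j}})$: up to the rescaling and shift above, $M_{\mathbf{j}}/L\,M_{\mathbf{j}}$ is $k[y_1,\ldots,y_{n+1}]/(y_1^{c_1},\ldots,y_{n+1}^{c_{n+1}}, y_1 + \ldots + y_{n+1})$, which after eliminating $y_{n+1} = -(y_1 + \ldots + y_n)$ is the ring $B_{\mathbf{c}} := k[y_1,\ldots,y_n]/(y_1^{c_1},\ldots,y_n^{c_n}, g^{c_{n+1}})$ with $g = y_1 + \ldots + y_n$. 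Writing $B_{\mathbf{c}} = A/(g^{c_{n+1}})$ with $A = k[y_1,\ldots,y_n]/(y_1^{c_1},\ldots,y_n^{c_n})$ Gorenstein Artinian of top degree $\sigma_A = \sum_{i=1}^n (c_i-1)$, the perfect pairing on $A$ shows $\times g^{c_{n+1}} : A_{D - c_{n+1}} \to A_D$ is surjective if and only if $\times g^{c_{n+1}} : A_{\sigma_A - D} \to A_{\sigma_A - D + c_{n+1}}$ is injective, so $(B_{\mathbf{c}})_D \neq 0$ exactly when $(0 :_A g^{c_{n+1}})_{\sigma_A - D} \neq 0$; since the least degree of a nonzero element of $(0 :_A g^{c_{n+1}})$ is, directly from the definition of $\mathcal{E}_p$ (the last coefficient of a minimal non-Koszul relation on $y_1^{c_1},\ldots,y_n^{c_n},g^{c_{n+1}}$ has degree $\mathcal{E}_p(c_1,\ldots,c_{n+1}) - c_{n+1}$), equal to $\mathcal{E}_p(c_1,\ldots,c_{n+1}) - c_{n+1}$, we get $\mathrm{tsd}(B_{\mathbf{c}}) = \sum_{i=1}^{n+1}(c_i - 1) + 1 - \mathcal{E}_p(c_1,\ldots,c_{n+1})$.

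Combining the two steps, $\mathrm{tsd}(M_{\mathbf{j}}/L\,M_{\mathbf{j}}) = a\big(\sum_{i=1}^{n+1}(c_i(j_i)-1) + 1 - \mathcal{E}_p(c_1(j_1),\ldots,c_{n+1}(j_{n+1}))\big) + \sum_{i=1}^{n+1} j_i$, and it remains to maximize over $\mathbf{j}$. For a fixed pattern of which $c_i$ equal $d_i+1$ the $\mathcal{E}_p$-term is fixed, so one takes $j_i$ as large as allowed: $j_i = a-1$ (contributing $a-1$) when $c_i = d_i$, and $j_i = e_i - 1$ (contributing $e_i - 1$) when $c_i = d_i + 1$. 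Recording the pattern by $\epsilon_i = 0$ in the first case and $\epsilon_i = 1$ in the second, one has $c_i = d_i + \epsilon_i$ and the contribution of coordinate $i$ is exactly $g_i$, which yields precisely the asserted maximum over $(\epsilon_1,\ldots,\epsilon_{n+1}) \in \{0,1\}^{n+1}$.

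I expect the main obstacle — or at least the only step that needs genuine care rather than bookkeeping — to be the first one: pinning down the $k[x_i^a]$-module structure of $k[x_i]/(x_i^{K_i})$ with the correct exponents $c_i(j_i)$ and degree shifts, verifying that the resulting decomposition of $S$ is honestly graded, and checking that passing to $S/(L)$ distributes over it. A secondary wrinkle is the degenerate case $e_i = 0$ (where $K_i < a$ forces $d_i = 0$): then $\epsilon_i = 1$ is not actually realizable, but the corresponding formal term in the stated maximum, which carries $g_i = e_i - 1 = -1$, never exceeds the $\epsilon_i = 0$ term because $\mathcal{E}_p$ is nondecreasing in each argument and grows by at most $1$ under $c_i \mapsto c_i + 1$ (Lemma \ref{inequalities}); hence the maximum is correct as written.
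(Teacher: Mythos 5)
Your proposal is correct and follows essentially the same strategy as the paper: your $P$-module decomposition of $S$ indexed by residues mod $a$ is exactly the $\mathbb{Z}_a^{n+1}$-grading the paper uses, and your Gorenstein-duality computation of each piece's top degree (via $\times g^{c_{n+1}}$ on the $n$-variable ring $k[y_1,\ldots,y_n]/(y_1^{c_1},\ldots,y_n^{c_n})$) is equivalent to the paper's (via $\times L$ on the $(n+1)$-variable ring $k[x_1,\ldots,x_{n+1}]/(x_1^{d_1},\ldots,x_{n+1}^{d_{n+1}})$), both yielding $\sum_{i=1}^{n+1}(c_i-1)+1-\mathcal{E}_p(c_1,\ldots,c_{n+1})$. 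One minor slip in your final paragraph: when $e_i=0$ one has $K_i=ad_i\ge a$, not $K_i<a$ (the latter is the separate degenerate case $d_i=0$), though your actual argument, using Lemma \ref{inequalities} to show the formal $\epsilon_i=1$ term is dominated by the $\epsilon_i=0$ term, is sound and in fact more careful than the paper's brief remark on this point.
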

\begin{proof}
For every monomial $x_1^{j_1} \cdots x_{n+1}^{j_{n+1}}$, write $j_i =af_i + g_i$ with $0 \le g_i \le a-1$. Define a ${\bf Z}_a \times {\bf Z}_a \times \cdots \times {\bf Z}_a$ grading on $k[x_1, \ldots, x_{n+1}]$ by letting $\mathrm{deg}(x_1^{j_1} \cdots x_{n+1}^{j_{n+1}})=(g_1, \ldots, g_{n+1})$. We claim that
$$
x_1^{j_1} \cdots x_{n+1}^{j_{n+1}}\in (x_1^{K_1}, \ldots, x_{n+1}^{K_{n+1}}, x_1^a + \ldots + x_{n+1}^a) \Leftrightarrow 
$$
$$
x_1^{f_1}\cdots x_{n+1}^{f_{n+1}} \in (x_1^{d_1 + \epsilon_1},  \ldots,  x_{n+1}^{d_{n+1} + \epsilon _{n+1}}, x_1 + \ldots +x_{n+1})
$$
where $\epsilon _j =0$ if $e_j \le g_j$ and $\epsilon _j=1$ if $e_j >g_j$.
Indeed assume that 
\begin{equation}\label{socle}
x_1^{af_1+g_1}\cdots x_{n+1} ^{af_{n+1} + g_{n+1}}=F_1x_1^{ad_1+e_1} + \ldots + F_{n+1} x_{n+1}^{ad_{n+1}+e_{n+1}}+ F_{n+2}(x_1^a + \ldots x_{n+1}^a)
\end{equation}
with $F_1, \ldots, F_{n+2}\in k[x_1, \ldots, x_{n+1}]$. We can assume that each term in the sum on the right hand side of the equation is homogeneous of degree $(g_1, \ldots, g_{n+1})$ in the ${\bf Z}_a \times {\bf Z}_a \times \cdots \times {\bf Z}_a$ grading, which means that it has the form $x_1^{g_1} \cdots x_{n+1}^{g_{n+1}}\cdot (\mathrm{polynomial \ in \ } x_1^a, \ldots, x_{n+1}^a)$. Fix a $i\le n+1$. If $e_i\le g_i$, this means that $F_i$ is of the form $x_1^{g_1} \cdots x_i^{g_i-e_i}\cdots x_{n+1}^{g_{n+1}} \cdot (\mathrm{polynomial\ in\ } x_1^a, \ldots, x_{n+1}^a)$. If $e_i > g_i$, then $F_i$ is of the form $x_1^{g_1}\cdots x_i^{a -e_i+g_i}\cdots x_{n+1}^{g_{n+1}} \cdot (\mathrm{polynomial\ in}\  x_1^a, \ldots, x_{n+1}^a)$. Now we can simplify $x_1^{g_1}\cdots x_{n+1}^{g_{n+1}}$ on both sides of equation (\ref{socle}) and let $y_i:=x_i^a$. The conclusion follows by viewing the two sides of the resulting equation as polynomials in the variables $y_1, \ldots, y_{n+1}$. 

Therefore we have 
$$
\mathrm{tsd}(K_1, \ldots, K_{n+1}; a) = \mathrm{sup} \left\lbrace a\, \mathrm{tsd}(d_1+\epsilon_1, \ldots, d_{n+1}+\epsilon_{n+1}; 1)+\sum_{i=1}^{n+1} g_i \right\rbrace
$$
where the supremum is over all the choices of $(\epsilon_1, \ldots, \epsilon_{n+1})\in \{0, 1\}^{n+1}$, and $g_i=a-1$ when $\epsilon_i=0$, $g_i=e_i-1$ when $\epsilon_i=1$ (one can use these choices for $g_i$ due to the fact that $\mathrm{tsd}$ is an increasing function in $K_1, \ldots, K_{n+1}$).

Now note that $\mathrm{tsd}(d_1, \ldots, d_{n+1}; 1)$ is the largest degree $j$ such that the map
$$
\times L : \left(\frac{k[x_1, \ldots, x_{n+1}]}{(x_1^{d_1}, \ldots, x_{n+1}^{d_{n+1}})}\right)_{j-1} \rightarrow \left(\frac{k[x_1, \ldots, x_{n+1}]}{(x_1^{d_1}, \ldots, x_{n+1}^{d_{n+1}})}\right)_j
$$
is not surjective, where $L=x_1 + \ldots + x_{n+1}$. By the perfect pairing $A_j \times A_{S-j} \rightarrow A_{S}$ in the graded Artinian Gorenstein ring $A=k[x_1, \ldots, x_{n+1}]/(x_1^{d_1}, \ldots, x_{n+1}^{d_{n+1}})$, we see that $j=S-i$, where $S=\sum_{i=1}^{n+1})d_i-1)$ is the socle degree of $A$, and $i$ is the smallest degree such that the map $\times L: 
A_i \rightarrow A_{i+1}$ is not injective. But it is not hard to see that $i+1=\mathcal{E}_p(d_1, \ldots, d_{n+1})$. It follows that $\mathrm{tsd}(d_1, \ldots, d_{n+1}; 1)=\sum_{i=1}^{n+1}(d_i-1)-\mathcal{E}_p(d_1, \ldots, d_{n+1})+1$.
\end{proof}

F-threshods of ideals with respect to other ideals were introduced in \cite{MTW}, where it is shown that they give an analogue of the jumping coefficients of multiplier ideals in characteristic zero. 
We remind the reader that 
given two ideals $\frak{a}, J \subset \frak{m}$ in a local ring $(R, \frak{m})$ of characteristic $p>0$, such that $\frak{a}\subseteq \sqrt{J}$, the F-threshold of $\frak{a}$ with respect to $J$ is defined as
$$
c^J(\frak{a}):=\lim_{e\rightarrow \infty} \frac{\nu _{\frak{a}}^J(p^e)}{p^e}, \ \mathrm{where}
 \ \ \ \nu _{\frak{a}}^J(p^e):=\mathrm{max}\{ r \, | \, \frak{a}^r \not\subset J^{[p^e]}\}.
$$
When $\frak{a} = J =\frak{m}$, $c^{\frak{m}}(\frak{m})$ is called the diagonal F-threshold of $R$. It is observed in \cite{Jinjia} that when $\frak{a}=\frak{m}$, and $R$ is a standard graded ring, $\nu_{\frak{m}}^J(q)$ is the top socle degree of $R/J^{[q]}$. Therefore we can apply the result of Theorem \ref{socle_degrees} to calculate the diagonal F-threshold of diagonal hypersurface rings. 
\begin{theorem}
Let
$\displaystyle R=\frac{k[x_1, \ldots, x_{n+1}]}{(x_1^a+\ldots + x_{n+1}^a)}$ where $k$ is a field of positive characteristic $p$ and $a$ is a positive integer not divisible by $p$. 
Then
$$
c^{\frak{m}}(\frak{m})=n+1-a \mathcal{M}
$$
where $\mathcal{M}$ is equal to
$$\mathrm{min}\{\lceil \frac{(n+1)\kappa-n+1}{2}\rceil \cdot \frac{1}{p^e}+\frac{(n+1)s}{ap^e}, \lceil \frac{(n+1)\kappa-n+2}{2} \rceil \cdot\frac{1}{p^e}+\frac{ns}{ap^e}, 
$$
$$\lceil \frac{(n+1)\kappa+1}{2}\rceil \cdot \frac{1}{p^e}+\frac{s}{ap^e}, \lceil \frac{(n+1)\kappa+2}{2} \rceil \cdot \frac{1}{p^e}, \frac{1}{p^{e-1}}\}
$$
where $e$ is the smallest exponent such that $p^e \ge a$, $\kappa=\lfloor \frac{p^e}{a}\rfloor$, and $s=p^e-\kappa a$ is the remainder of $p^e$ modulo $a$.
\end{theorem}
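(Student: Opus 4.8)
The plan is to combine the definition $c^{\mathfrak m}(\mathfrak m)=\lim_{e'\to\infty}\nu_{\mathfrak m}^{\mathfrak m}(p^{e'})/p^{e'}$ with the fact, recalled above from \cite{Jinjia}, that since $R$ is standard graded $\nu_{\mathfrak m}^{\mathfrak m}(p^{e'})$ is the top socle degree of $R/\mathfrak m^{[p^{e'}]}=k[x_1,\ldots,x_{n+1}]/(x_1^{p^{e'}},\ldots,x_{n+1}^{p^{e'}},x_1^a+\ldots+x_{n+1}^a)$, i.e. $\nu_{\mathfrak m}^{\mathfrak m}(p^{e'})=\mathrm{tsd}(p^{e'},\ldots,p^{e'};a)$. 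So I would plug $K_1=\cdots=K_{n+1}=p^{e'}$ into Theorem~\ref{socle_degrees}, divide by $p^{e'}$, and pass to the limit; the key point is that after dividing, everything except one $\mathcal{E}_p$-term has a numerator bounded independently of $e'$ and dies, so the answer is governed by the asymptotics of $\mathcal{E}_p$ supplied by Theorem~\ref{main_theorem}.

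Write $p^{e'}=ad+s'$ with $0\le s'\le a-1$, so $d=\lfloor p^{e'}/a\rfloor$. Since $\mathcal{E}_p$ and the sum $\sum g_i$ in Theorem~\ref{socle_degrees} depend on $(\epsilon_1,\ldots,\epsilon_{n+1})$ only through $j:=\sum\epsilon_i$, that theorem reads
\[
\mathrm{tsd}(p^{e'},\ldots;a)=\max_{0\le j\le n+1}\Bigl\{a\bigl((n+1)(d-1)+j-\mathcal{E}_p(\underbrace{d+1,\ldots,d+1}_{j},\underbrace{d,\ldots,d}_{n+1-j})+1\bigr)+(n+1-j)(a-1)+j(s'-1)\Bigr\}.
\]
Here $a(n+1)(d-1)=(n+1)(p^{e'}-s'-a)$, while $j$, $s'$ and the remaining summands are bounded independently of $e'$; dividing by $p^{e'}$ and letting $e'\to\infty$ therefore gives $a(n+1)(d-1)/p^{e'}\to n+1$, kills every bounded term, and (once the limits below are shown to exist) yields
\[
c^{\mathfrak m}(\mathfrak m)=(n+1)-a\cdot\min_{0\le j\le n+1}\ \lim_{e'\to\infty}\frac{\mathcal{E}_p(\underbrace{d+1,\ldots}_{j},\underbrace{d,\ldots}_{n+1-j})}{p^{e'}}.
\]

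To evaluate these limits I would rescale to the level where Theorem~\ref{main_theorem} applies. Let $e$ be minimal with $p^e\ge a$; then $p^{e-1}<a\le p^e$, so $1\le\kappa:=\lfloor p^e/a\rfloor\le p-1$ and $s:=p^e-\kappa a\in\{0,\ldots,a-1\}$. Setting $q':=p^{e'-e}$ one has $p^{e'}/a=q'\kappa+q's/a$, hence $d=\kappa q'+r$ with $r=\lfloor q's/a\rfloor$, and for all large $e'$ the remainders $r+\epsilon_i$ lie in $[0,q'-1]$. Thus $(d+\epsilon_1,\ldots,d+\epsilon_{n+1})$ satisfies the hypotheses of Theorem~\ref{main_theorem}(I) with $k_i=\kappa$ (condition~\eqref{cond} holds because $\kappa\le\lfloor\frac{(n+1)\kappa-n+1}{2}\rfloor$), so
\[
\mathcal{E}_p(d+\epsilon_1,\ldots,d+\epsilon_{n+1})=\min_{\eta\in\{0,1\}^{n+1}}\Bigl\{q'\,\mathcal{E}_p(\kappa+\eta_1,\ldots,\kappa+\eta_{n+1})+\sum_{\eta_i=0}(r+\epsilon_i)\Bigr\}.
\]
Dividing by $p^{e'}=q'p^e$ and using $r/q'\to s/a$ and $|\{i:\eta_i=0,\ \epsilon_i=1\}|/q'\to 0$, the $\epsilon$-dependence washes out, the limit exists, is independent of $j$, and equals
\[
\mathcal{M}:=\min_{\eta\in\{0,1\}^{n+1}}\Bigl\{\frac{\mathcal{E}_p(\kappa+\eta_1,\ldots,\kappa+\eta_{n+1})}{p^e}+\frac{(n+1-|\eta|)\,s}{a\,p^e}\Bigr\};
\]
hence $c^{\mathfrak m}(\mathfrak m)=(n+1)-a\mathcal{M}$, and it remains to identify $\mathcal{M}$ with the displayed minimum.

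By Theorem~\ref{main_theorem}(II) (or Lemma~\ref{char0} together with the remark following it), for $1\le\kappa\le p-1$ and $|\eta|=J$ one has $\mathcal{E}_p(\kappa+\eta_1,\ldots,\kappa+\eta_{n+1})=\min\{\lceil\frac{(n+1)\kappa+J-n+1}{2}\rceil,p\}$, so $\mathcal{M}=\frac{1}{p^e}\min_{0\le J\le n+1}\bigl\{\min\{\lceil\frac{(n+1)\kappa+J-n+1}{2}\rceil,p\}+\frac{(n+1-J)s}{a}\bigr\}$. Put $\varphi(J):=\lceil\frac{(n+1)\kappa+J-n+1}{2}\rceil+\frac{(n+1-J)s}{a}$; then $\varphi(J+1)-\varphi(J)=\delta_J-\frac{s}{a}$ with $\delta_J\in\{0,1\}$ alternating in $J$, so any two consecutive increments of $\varphi$ sum to $1-\frac{2s}{a}$. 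Consequently the minimum of $\varphi$ over $\{0,\ldots,n+1\}$ is attained at $J\in\{0,1\}$ if $2s\le a$ and at $J\in\{n,n+1\}$ if $2s\ge a$, so in all cases it equals $\min\{\varphi(0),\varphi(1),\varphi(n),\varphi(n+1)\}$; reinstating the truncation at $p$ (inert where $\lceil\frac{(n+1)\kappa+J-n+1}{2}\rceil\le p$, and contributing exactly $p$ at $J=n+1$ otherwise) upgrades this to $\mathcal{M}=\frac{1}{p^e}\min\{\varphi(0),\varphi(1),\varphi(n),\varphi(n+1),p\}$. Since $\varphi(0)=\lceil\frac{(n+1)\kappa-n+1}{2}\rceil+\frac{(n+1)s}{a}$, $\varphi(1)=\lceil\frac{(n+1)\kappa-n+2}{2}\rceil+\frac{ns}{a}$, $\varphi(n)=\lceil\frac{(n+1)\kappa+1}{2}\rceil+\frac{s}{a}$, $\varphi(n+1)=\lceil\frac{(n+1)\kappa+2}{2}\rceil$ and $p/p^e=1/p^{e-1}$, this is exactly the asserted formula for $\mathcal{M}$. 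The part I expect to be most delicate is the limit passage in the two middle paragraphs---checking uniform boundedness of all $j$- and $\epsilon$-dependent terms, that $\min$ commutes with $\lim$ over the finite set of $\eta$, and that the reduction $d=\kappa q'+r$ keeps all remainders in range for large $e'$; the final monotonicity-and-truncation bookkeeping is elementary by comparison.
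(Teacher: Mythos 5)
Your proof is correct and follows essentially the same route as the paper's: invoke Theorem~\ref{socle_degrees}, divide by $p^{e'}$ and pass to the limit so only the $\mathcal{E}_p$-term survives, apply Theorem~\ref{main_theorem} to evaluate that term, and then identify the minimizing $J$ by the second-difference/parity argument. The only cosmetic difference is that the paper passes to the subsequence $q\equiv 1\pmod a$ (quoting existence of the limit from \cite{Jinjia}) and sets $r(q)=(sq'-1)/a$ exactly, whereas you work with the full sequence and $r=\lfloor q's/a\rfloor$; both yield $r/q'\to s/a$ and the same conclusion.
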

\begin{proof}
We have $\displaystyle c^{\frak{m}}(\frak{m})= \lim_{q=p^e \rightarrow \infty} \frac{tsd(q, \ldots, q; a)}{q}$. Since we know from Proposition 2.3 in \cite{Jinjia}, that the limit exists, it suffices to consider the subsequence consisting of $q\equiv 1$ (mod $a$). 
Let $q=d(q)a +1$, with $d(q)=(q-1)/a$.

According to Theorem (\ref{socle_degrees}),
$$
\mathrm{tsd}(q, \ldots, q; a)=
$$
$$
\mathrm{max}\{ a\left[ \sum _{i=1}^{n+1} (d(q)+\epsilon_i -1) - \mathcal{E}_p(d(q)+ \epsilon_1, \ldots, d(q)+\epsilon_{n+1})+1\right] 
-(a-1)(n+1-I)\}=
$$
$$
\mathrm{max} \{ a(n+1)(d(q)-1)-a\mathcal{E}_p(d(q)+\epsilon_1, \ldots, d(q)+\epsilon_{n+1})+a+a(n+1)-(n+1)+I\}
$$
where the maximum is taken over all the choices of $\epsilon_1, \ldots, \epsilon_{n+1}\in \{0, 1\}$, and $I=\sum_{i=1}^{n+1} \epsilon_i$.
Upon dividing by $q$ and taking the limit, we see that the last four terms above have zero contribution in the limit. Thus, we may take $\epsilon_1= \ldots = \epsilon_{n+1} =0$  in order to achieve the maximum of the remaining terms. We have
$$
\mathrm{ft}(R)=a(n+1) \lim_{q \rightarrow \infty}\frac{d(q)}{q} - a\lim_{q\rightarrow \infty} \frac{\mathcal{E}_p(d(q), \ldots, d(q))}{q}
=n+1- a \lim_{q\rightarrow \infty} \frac{\mathcal{E}_p(d(q), \ldots, d(q))}{q}.
$$
Let $e$ be the smallest exponent such that $p^e \ge a$. Let $s$ denote the remainder of $p^e$ modulo $a$ ($0 \le s \le a-1$), and let $\displaystyle q'=\frac{q}{p^e}$. We can write
 $d(q)=k(q) q' + r(q)$ with $\displaystyle k(q)=\frac{p^e-s}{a}$ and $\displaystyle r(q)=\frac{sq'-1}{a}$. Note that $\displaystyle k(q)=\lfloor \frac{p^e}{a} \rfloor=\kappa$. Also note that $1 \le \kappa \le p-1$ (since $p^{e-1} < a\le p^e$) and $0 \le r(q)<q'$ because $s<a$. We can therefore apply Theorem (\ref{main_theorem}) and
we have
$$
\mathcal{E}_p(d(q), \ldots, d(q))=\mathrm{min}\{ \mathcal{E}_p(\kappa+\epsilon_1, \ldots, \kappa+\epsilon_{n+1})q' + (n+1-I)r(q)\}
$$
$$
=\mathrm{min}\{\lceil \frac{(n+1)\kappa +I -n+1}{2}\rceil q' + (n+1-I)r(q), pq'\}
$$
where the the first minimum is over all choices of $\epsilon_i \in \{0, 1\}$, and $I=\sum_{i=1}^{n+1} \epsilon_i$ and the second minimum is over all the choices of $0 \le I \le n+1$.

Let $\displaystyle m(I):= \lceil \frac{(n+1)\kappa +I -n+1}{2}\rceil q' + (n+1-I)r(q)$.
Note that $\displaystyle m(I+2)-m(I)=q'-2r(q)=\frac{aq'-2sq'-2}{a}$. It follows that $m(I+2)-m(I)>0$ for all $0 \le I \le n-1$  if $a>2s$ and $q$ is  sufficiently large, and $m(I+2)-(I)<0$ for all $0 \le I \le n-1$ if $a\le 2s$. Therefore the minimum of $m(I)$ when $0 \le I \le n+1$ is achieved for $I \in \{0, 1, n, n+1\}$.

The conclusion now follows by observing that $\displaystyle \lim_{q\rightarrow \infty} \frac{r(q)}{q}=\frac{s}{ap^e}$.
\end{proof}

\section{The weak Lefschetz property}

Recall that a standard graded algebra $A=\oplus A_i$ is said to have the weak Lefschtez property (WLP) if there exists a $L\in A_1$ such that the map $\times L : A_i \rightarrow A_{i+1}$ has maximal rank (i.e. is either injective or surjective) for every $i$.

Let $k$ be an infinite field of positive characterisitc $p$, and let $d_i = k_i q +r_i$ satisfy the assumptions in Theorem (\ref{main_theorem}), with $q=p^e$ a power of $p$. We study the following question:

\begin{question}\label{wlp_question}
For what values of $d_1,\ldots, d_{n+1}$ as above does the ring
 $\displaystyle A=\frac{k[x_1, \ldots, x_{n+1}]}{(x_1^{d_1}, \ldots, x_{n+1}^{d_{n+1}})}$ does have the weak Lefschetz property (WLP)?
\end{question}

If the field $k$ has characteristic zero, then it is known (see \cite{St}, \cite{W}) that all the monomial complete intersections rings $A$ have WLP, but the story is much different in positive characteristic. 
The question (\ref{wlp_question}) has been investigated in \cite{BK}) for $n=2$ when $d_1=d_2=d_3$ and in \cite{LZ} for $n=2$ in the general case. The question was also answered in \cite{KV} for the case $n\ge 3$ when $d_1 = \ldots d_{n+1}$. The results of this section generalize those in \cite{KV}. Closely related problems are studied in \cite{CN} and \cite{C2}. A survey of the history and recent developments related to the weak Lefschetz property is given in \cite{MN}.

We see that when $n \ge 5$ there are no values of $d_1, \ldots, d_{n+1}$ that satisy the assumption in Theorem (\ref{main_theorem}) with $q>1$ such that $A$ has WLP (see Corollary (\ref{corWLP}); when $n=4$ the only values are for $p=3$, $(d_1, \ldots, d_5)=(4, 4, 4, 4, 5)$ or a permuation of this (see Proposition (\ref{propWLP}). 
 When $n=3$ we give a complete characterization of those values of $d_1, \ldots, d_{n+1}$ that satisfy the assumption of Theorem (\ref{main_theorem}) and such that the ring $A$ has the weak Lefschetz property (see Proposition (\ref{prop2WLP}). These results generalize our previous work in \cite{KV}, where we considered the case $d_1 = \ldots = d_{n+1}$. Closely related problems are studied in \cite{CN} and and \cite{C2}.

Recall (see for example Corollary 2.2 in \cite{KV}) that $A$ has the weak Lefschetz property if and only if 
\begin{equation}\label{wlp}
\mathcal{E}_p(d_1, \ldots, d_{n+1}) \ge \lceil \frac{\sum_{i=1}^{n+1} d_i - n +1}{2} \rceil
\end{equation}

Let 
$$
E:=\lceil \frac{\sum_{i=1}^{n+1} k_i - n+1 }{2}\rceil
$$
Then for every $0 \le I \le n+1$  we have $\mathcal{E}_p(d_1, \ldots, d_{n+1}) \le \lceil \frac{\sum_{i=1}^{n+1} k_i + I -n+1}{2} \rceil q + r_1 + \ldots + r_{n-I+1}$.
We will discuss two cases depending on the parity of $\sum_{i=1}^{n+1} k_i - n +1$. For convenience of notation we will assume that $r_1 \le r_2 \le \ldots \le r_{n+1}$.

{\bf Case 1:} Assume that $\sum _{i=1}^{n+1} k_i -n +1$ is odd. Then we have $\sum_{i=1}^{n+1} k_i = 2E +n -2$, and equation (\ref{wlp}) implies that if the weak Lefschetz property holds, then 
$$
\lceil \frac{\sum_{i=1}^{n+1} d_ i - n+1}{2} \rceil = \lceil \frac{(2E+n-2)q+ \sum_{i=1}^{n+1} r_i - n+1} {2} \rceil \le (E+ \lfloor \frac{I}{2} \rfloor ) q + r_1 + \ldots + r_{n+1-I},
$$
or equivalently
\begin{equation}\label{genI}
(n-2-2\lfloor \frac{I}{2} \rfloor) q + r_{n+2-I}+ \ldots + r_{n+1} \le r_1 + \ldots + r_{n+1-I} + n-1
\end{equation}
for every $I =0, \ldots, n+1$.

{\bf Case 2:} Assume that $\sum_{i=1}^{n+1} k_i -n +1$ is even. Then we have $\sum_{i=1}^{n+1} k_i = 2E+n-1$, and equation (\ref{wlp}) implies that if the weak Lefschetz property holds, then 
$$
\lceil \frac{\sum_{i=1}^{n+1} d_i -n +1}{2} \rceil = \lceil \frac{(2E+n-1)q+ \sum_{i=1}^{n+1} r_i - n+1}{2} \rceil \le (E+\lceil \frac{I}{2}\rceil)q + r_1 + \ldots + r_{n+1-I},
$$
or equivalently
\begin{equation}\label{genIeven}
(n-1-2\lceil \frac{I}{2} \rceil)q + r_{n+2-I} + \ldots + r_{n+1} \le r_1 + \ldots + r_{n+1-I} + n-1
\end{equation}
for every $I =0, \ldots, n+1$.

Furthermore note that $\mathcal{E}_p(d_1, \ldots, d_{n+1})\le \mathcal{E}_p(pq, \ldots, pq)=pq$, and thus if the weak Lefschetz property holds then we must have \begin{equation}\label{pq}
pq \ge \lceil \frac{\sum_{i=1}^{n+1} d_i - n +1 }{2} \rceil
\end{equation}

We will show that when $n$ is sufficiently large, equation (\ref{wlp}) implies that $q=1$ or $q=2$. For smaller values of $n$ (but still $n\ge 4$) we will have to consider a few more possible values of $q$.

\begin{lemma}\label{largen}
Assume that $A$ has the weak Lefschetz property and that $d_1, \ldots, d_{n+1}$ are as in Theorem (\ref{main_theorem}). 
\begin{itemize}
\item If $n \ge 9$ then we must have $q \le 2$.

\item If $n \in \{7, 8\}$ then we must have $q \le 3$.

\item If $n\in \{5, 6\}$ then we must have $q \le 4$ (and therefore $p \le 3$).
\end{itemize}
\end{lemma}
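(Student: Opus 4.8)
The plan is to read off from the weak Lefschetz criterion (\ref{wlp}) the family of numerical inequalities (\ref{genI})--(\ref{genIeven}) (one for each $I\in\{0,\dots,n+1\}$; these are valid for \emph{every} $I$ because $\mathcal{E}_p(k_1+\epsilon_1,\dots,k_{n+1}+\epsilon_{n+1})$ never exceeds $\lceil\frac{\sum_i(k_i+\epsilon_i)-n+1}{2}\rceil$), together with the capped bound (\ref{pq}), and then to pit a handful of these against one another, using in addition that $q=p^e\ge p$ and that $E\ge1$ (indeed $E\ge2$ whenever $\sum k_i-n+1$ is odd, since it is then $\ge3$).

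The first step is the two extreme inequalities. Setting $I=0$ in (\ref{genI}) or (\ref{genIeven}) gives a lower bound $\sum r_i\ge cq-(n-1)$, where $c=\sum k_i-2E\in\{n-2,n-1\}$ according to the parity of $\sum k_i-n+1$; setting $I=n+1$ gives an upper bound $\sum r_i\le\lambda q+(n-1)$, where $\lambda\in\{2,3\}$ comes from $2\lfloor\frac{n+1}{2}\rfloor$ or $2\lceil\frac{n+1}{2}\rceil$. Combining them produces a bound of the shape $q\le\frac{2(n-1)}{n-d_0}$ with $d_0\in\{3,4,5\}$ depending on the two relevant parities. This already forces $q\le2$ once $n$ is large; but in the residual ranges, and in particular in the worst parity case $d_0=5$ (where at $n=5$ the bound degenerates), two further ingredients are needed.

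The second step uses (\ref{pq}). From $\sum d_i\ge q\sum k_i$ one gets $q\,(2E+c-2p)\le n-1$; since $c\ge n-2$, combining this with $q\ge p$ and $E\ge1$ forces $p$ to be small. In fact $p=2$ forces all $k_i=1$, hence $E=1$, $c=n-1$, and then (\ref{pq}) alone reads $q(n-3)\le n-1$, impossible for $n\ge6$ since $q\ge2$; more generally this eliminates all but finitely many pairs $(p,n)$. The third step handles what remains by using (\ref{genI})/(\ref{genIeven}) at a small intermediate value $I\in\{1,2,3\}$ together with the averaging inequality $r_{n+2-I}+\dots+r_{n+1}\ge\frac{I}{n+1}\sum r_i$ (the top $I$ of $n+1$ nonnegative numbers is at least an $\frac{I}{n+1}$ fraction of their sum): once $\sum r_i$ is pinned from below by $I=0$ and $q$ is as large as the crude bound allows, the mass forced onto the top $I$ remainders violates the intermediate inequality. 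This is what disposes of the surviving prime powers $q\in\{3,4,5,\dots\}$ in the boundary ranges.

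Putting it together, after first disposing of the case $E\ge p$ (then every cap in (\ref{answer}) is active, $\mathcal{E}_p(d_1,\dots,d_{n+1})=pq$, while (\ref{cond}) gives $\sum k_i\ge2p+n-2$ and hence $\lceil\frac{\sum d_i-n+1}{2}\rceil>pq$, so WLP fails already for $n\ge4$), one is left with $E\le p-1$; then for $n\ge9$ only $q\le2$ survives, for $n\in\{7,8\}$ only $q\le3$, and for $n\in\{5,6\}$ only $q\le4$, and since $q=p^e$ this forces $p\in\{2,3\}$ in the last range. The main obstacle I expect is the interaction of the two parity cases with the ``capped versus uncapped at $I=n+1$'' dichotomy: in the bad parity case the two-inequality bound is only $q\lesssim\frac{2(n-1)}{n-5}$, so pushing down to $q\le2$ genuinely requires feeding in $q\ge p$, the sharpened bound $E\ge2$, the averaging argument on one or two extra inequalities, and a short explicit check of the small-$n$ residues — keeping this casework organized is the delicate part.
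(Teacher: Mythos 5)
Your proposal reaches the same family of inequalities (\ref{genI})--(\ref{genIeven}) plus the cap (\ref{pq}), but the route you take through them is genuinely different from, and substantially more involved than, the paper's. The crucial difference is your choice of the ``two extreme inequalities'' $I=0$ and $I=n+1$. In Case 1 (where $\sum k_i-n+1$ is odd), the paper instead pairs $I=1$ with $I=n$ (for $n$ odd) or with $I=n+1$ (for $n$ even). The $I=1$ inequality reads $(n-2)q+r_{n+1}\le r_1+\cdots+r_n+n-1$, which is the $I=0$ inequality sharpened by an extra $2r_{n+1}$ on the left; and $I=n$ (coefficient $-q$) is sharper than $I=n+1$ (coefficient $-3q$). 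Feeding the $I=n$ bound on $r_2+\cdots+r_n$ into $I=1$ and using $r_1\le r_{n+1}$ produces the clean estimate $(n-3)q\le 2(n-1)$, i.e.\ $q\le 2(n-1)/(n-3)$, which directly yields $q\le 2,3,4$ for $n\ge 9$, $n\ge 7$, $n\ge 5$ respectively. Your $(I=0,I=n+1)$ pairing in the worst parity subcase only gives $q\le 2(n-1)/(n-5)$, which at $n=9$ only bounds $q\le 4$ and at $n=5$ degenerates entirely --- this is exactly why you are then forced to bring in (\ref{pq}), the constraint $q\ge p$, the dichotomy on whether $E\ge p$, and an averaging inequality. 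Those extra ingredients can indeed close the residual gaps (a few spot checks confirm, e.g., that (\ref{pq}) together with $\sum k_i\ge 2E+n-2$ and the $I=0$ lower bound on $\sum r_i$ rules out $q\in\{3,4,5\}$ at $n=9$ in Case 1), so I do not see a fatal flaw; but the casework you are signing up for is considerably heavier than what the paper needs, and it is not fully worked out in your sketch. The short version: the paper's proof is a pure two-linear-inequality argument in each of four parity subcases, using only (\ref{genI})/(\ref{genIeven}); it never touches (\ref{pq}), never uses $q\ge p$, and needs no averaging --- the whole trick is choosing $I=1$ rather than $I=0$ when the parity is odd.
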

\begin{proof}

{\bf Case 1:} $\sum _{i=1}^{n+1} k_i -n +1$ is odd. 
Let $I=1$ in equation (\ref{genI}). We have
\begin{equation}\label{I=1}
(n-2)q + r_{n+1} \le r_1 + \ldots + r_n + n-1
\end{equation}
If $n$ is odd, we let $I=n$ in equation (\ref{genI}), and we have
\begin{equation}\label{I=n}
-q + r_2 + \ldots + r_{n+1} \le r_1 + n-1
\end{equation}
Combining equations (\ref{I=1}) and (\ref{I=n}) we see that
$$
(n-2) q \le r_1 + (r_2 + \ldots + r_n) -r_{n+1} + n-1 \le q + 2r_1 -2r_{n+1} + 2(n-1) \le q + 2(n-1)
$$
and therefore $q \le 2(n-1)/(n-3)$. For $n\ge 9$, this implies that $q \le 2$. For $n=7$ we have $q \le 3$ and for $n=5$ we have $q \le 4$. 

If $n$ is even we let $I=n+1$ in equation (\ref{genI}). We have
\begin{equation}\label{I=n+1}
-2q + r_1 + \ldots + r_{n+1} \le n-1
\end{equation}
Combining equations (\ref{I=1}) and (\ref{I=n+1}) we obtain 
$$
(n-2) q \le r_1 + \ldots + r_n - r_{n+1} + (n-1) \le 2q -2r_{n+1} + 2(n-1),
$$
or $(n-4)q \le 2(n-1-r_{n+1})$. Since we are assuming $r_1 \le \ldots \le r_{n+1}$, we must have either $r_{n+1} \ge 1$ or $r_1 =\ldots = r_{n+1} =0$. In the latter case, equation (\ref{I=1}) becomes $(n-2)q \le n-1$ which is only possible if $q=1$. Thus we may assume that $r_{n+1} \ge 1$, and we have $q \le 2(n-2)/(n-4)$. When $n\ge 10$ this implies $q\le 2$. When $n=8$ it implies $q \le 3$, and when $n=6$ it implies $q\le 4$.

{\bf Case 2:} $\sum_{i=1}^{n+1} k_i -n +1$ is even. 
Plug in $I=0$ in equation (\ref{genIeven}). We get
\begin{equation}\label{I=0}
(n-1) q \le r_1 + \ldots + r_{n+1} + n-1
\end{equation}
If $n$ is even plug in $I=n$ in equation (\ref{genIeven}). We get
\begin{equation}\label{I=neven}
-q+ r_2 + \ldots + r_{n+1} \le r_1 + n-1.
\end{equation}
Since $r_1\le \ldots \le r_{n+1}$, this implies that $-q+nr_1 \le r_1 + n-1$, or $r_1 \le 1 + q/(n-1)$.
Combining equations (\ref{I=0}) and (\ref{I=neven}), we get
$$
(n-1)q \le r_1 + (r_2 + \ldots + r_{n+1}) + n-1 \le q+ 2r_1 +2(n-1) \le q + \frac{2q}{n-1} +2n
$$
and therefore 
$$
q \le \frac{2n}{n-2-\frac{2}{n-1}}.
$$
When $n\ge 8$ this implies $q \le 2$, when $n=6$ it implies $ q\le 3$, and when $n=4$ it implies $q \le 5$.

If $n$ is odd, plug in $I=n+1$ in equation (\ref{genIeven}). We get
\begin{equation}\label{I=n+1even}
-2q + r_1 + \ldots + r_{n+1} \le n-1
\end{equation}
Combining equations (\ref{I=0}) and (\ref{I=n+1even}), we get
$(n-1) q \le 2q + 2(n-1)$, which implies $q \le 2(n-1)/(n-3)$. If $n \ge 9$ this implies $ q\le 2$. If $n=7$ then $q \le 3$ and if $n=5$ then $q \le 4$.
\end{proof}

\begin{lemma}\label{p=2}
Assume that $p=2$ and $d_1, \ldots, d_{n+1}$ are as in Theorem (\ref{main_theorem}). If $A$ has the weak Lefschetz property, then we must have $n \le 3$, or $n=4$ and $q=2$.
\end{lemma}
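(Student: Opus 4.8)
The plan is to feed into the weak Lefschetz criterion (\ref{wlp}) two numerical consequences it is already known to have in this situation — the $I=0$ instance of (\ref{genIeven}), i.e.\ (\ref{I=0}), together with the bound (\ref{pq}) — and to extract a contradiction unless $n$ is small.

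First I would note that for $p=2$ the requirement $1\le k_i\le p-1$ forces $k_1=\cdots=k_{n+1}=1$, so condition (\ref{cond}) is automatic, $d_i=q+r_i$ with $0\le r_i\le q-1$, and we may assume $q=p^e\ge 2$ (if $q=1$ then $d_i=1$ for every $i$ and $A=k$). Since $\sum_{i=1}^{n+1}k_i-n+1=2$ is even we are in Case~2, with $E=1$. Writing $S:=\sum_{i=1}^{n+1}r_i$, so that $\sum_{i=1}^{n+1}d_i=(n+1)q+S$, and supposing $A$ has the weak Lefschetz property: (\ref{I=0}) reads $(n-1)q\le S+n-1$, i.e.\ $S\ge(n-1)(q-1)$, while (\ref{pq}) reads $2q\ge\lceil((n+1)q+S-n+1)/2\rceil$, which after clearing the ceiling gives $(n+1)q+S-n+1\le 4q$, i.e.\ $S\le(3-n)q+n-1$.

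Combining the two estimates for $S$ yields $(n-1)(q-1)\le(3-n)q+n-1$, which simplifies to $(n-2)q\le n-1$. If $n\ge 4$ and $q\ge 2$ the left-hand side is at least $2(n-2)=2n-4>n-1$, a contradiction; hence $n\le 3$. Since $n\le 3$ implies the disjunction claimed by the lemma, this finishes the proof. (For orientation I would add that $(n-2)q\le n-1$ also forces $q\le 2$ in the remaining case $n=3$, so the only characteristic-$2$ monomial complete intersections with $q\ge 2$ where WLP can hold have $n=3$ and $q=2$.)

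I do not expect a genuine obstacle: the whole argument is a one-line manipulation of two inequalities already established before Lemma~\ref{largen}. The two points that deserve a moment of care are the reduction $k_i\equiv 1$ — which is exactly what lets us invoke the Case~2 relations with $E=1$ — and the ceiling-function bookkeeping in converting (\ref{pq}) into a linear inequality in $S$. One could alternatively run the same computation by specializing Theorem~\ref{main_theorem}(I) to get $\mathcal{E}_2(d_1,\ldots,d_{n+1})=\min\{q+S,\,2q\}$ and comparing directly with $\lceil(\sum d_i-n+1)/2\rceil$.
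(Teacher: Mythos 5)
Your proof is correct, and it in fact establishes a slightly sharper conclusion than the lemma claims. The paper's own proof uses only inequality (\ref{pq}) together with the crude observation that $r_1,\ldots,r_{n+1}$ cannot all vanish (hence $\sum r_i\ge 1$), leading to $(n-3)q\le n-2$, which still leaves open the possibility $n=4$, $q=2$; that is why the lemma's statement includes this case. You instead pair (\ref{pq}) with the $I=0$ instance of (\ref{genIeven}), i.e.\ inequality (\ref{I=0}), which gives the much stronger lower bound $\sum r_i\ge (n-1)(q-1)$. Combining this with the upper bound on $\sum r_i$ extracted from (\ref{pq}) gives $(n-2)q\le n-1$, ruling out $n=4$ with $q=2$ as well; the case split in the lemma was therefore unnecessary. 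Both arguments are one-line manipulations of the same stock of inequalities set up before Lemma~\ref{largen}, so the overall strategy is the same, but your choice of lower bound on $\sum r_i$ is more efficient and the output is consistent with Proposition~\ref{propWLP}, which (for all $p$) shows that $n=4$, $q>1$, and WLP force $p=3$. You also handle the degenerate case more transparently: rather than asserting that the $r_i$ cannot all be zero, you observe up front that $q=1$ forces every $d_i=1$ and $A=k$, so one may assume $q\ge 2$, after which the rest is a clean numerical contradiction. I see no gaps.
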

\begin{proof}
The assumption that $d_1, \ldots, d_{n+1}$ are as in Theorem (\ref{main_theorem}) implies that $k_1=\ldots = k_{n+1}=1$. 
Equation (\ref{pq}) implies that
\begin{equation}\label{2q}
2q \ge \lceil \frac{\sum_{i=1}^{n+1} d_i - n +1 }{2} \rceil = \lceil \frac{(n+1)q + \sum_{i=1}^{n+1} r_i - n +1}{2} \rceil
\end{equation}
Note that we cannot have $r_1 = \ldots r_{n+1}=0$, and therefore equation (\ref{2q}) implies $(n+1)q - n +2 \le 4q$, or $(n-3)q\le n-2$. This can only hold if $n\le 3$, or $n=4$ and $q=2$.
\end{proof}

\begin{lemma}\label{p=3}
Assume that $d_1, \ldots, d_{n+1}$ are as in Theorem (\ref{main_theorem}) with $p=q=3$, and $A$ has WLP. Then we must have $n \le 4$.
\end{lemma}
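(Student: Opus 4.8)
The plan is to follow the strategy of Lemma~\ref{p=2}, namely to combine the a priori bound~(\ref{pq}) with lower bounds for $\sum_{i=1}^{n+1} r_i$. Write $d_i = 3k_i + r_i$ with $1 \le k_i \le 2$ and $0 \le r_i \le 2$, set $m := \#\{i : k_i = 2\}$, and $K := \sum_{i=1}^{n+1} k_i = n+1+m$. The first step is to observe that condition~(\ref{cond}) rules out $m = 1$: if $m = 1$ then $K = n+2$ and~(\ref{cond}) would require $2 \le \lfloor (K - n + 1)/2 \rfloor = \lfloor 3/2 \rfloor = 1$, which is impossible. So either $m = 0$ or $m \ge 2$, and I would treat these two cases in turn.

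In the case $m \ge 2$, only~(\ref{pq}) and the trivial bounds $r_i \ge 0$ are needed. Since $\sum_i d_i = 3K + \sum_i r_i$, one has $\sum_i d_i - n + 1 = 2n + 4 + 3m + \sum_i r_i$, so~(\ref{pq}) with $p = q = 3$ gives $18 \ge 2n + 4 + 3m + \sum_i r_i$, hence $0 \le \sum_i r_i \le 14 - 2n - 3m \le 8 - 2n$, which forces $n \le 4$.

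In the case $m = 0$ all $k_i$ equal $1$, so $\sum_i k_i - n + 1 = 2$ is even, $E = 1$, and the weak Lefschetz inequality~(\ref{genIeven}) applies. Here positivity of the $r_i$ alone is too weak (it only yields $n \le 6$), so the point is to use~(\ref{genIeven}) with $I = 0$ and $q = 3$, which reads $2(n-1) \le \sum_{i=1}^{n+1} r_i$. Substituting this into~(\ref{pq}), which now states $2n + 4 + \sum_i r_i \le 18$, gives $2n + 4 + 2(n-1) \le 18$, i.e.\ $4n \le 16$, so again $n \le 4$.

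The only places where any care is needed are the arithmetic manipulations of~(\ref{pq}) and~(\ref{genIeven}); I do not anticipate a genuine obstacle. The two subtleties worth flagging are that~(\ref{cond}) is exactly what excludes $m = 1$, and that in the subcase $m = 0$ one really has to invoke the weak Lefschetz inequality~(\ref{genIeven}), not merely $r_i \ge 0$. As a consistency check, $n = 4$ occurs only at the boundary of both inequalities, in agreement with the fact that the sole surviving $(n+1)$-tuple for $p = 3$ with $q > 1$ turns out to be a permutation of $(4,4,4,4,5)$. Alternatively one could start from Lemma~\ref{largen}, which already gives $n \le 8$ when $q = 3$, and then eliminate $n \in \{5,6,7,8\}$ one at a time, but the direct argument above is shorter and uniform.
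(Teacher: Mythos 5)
Your proof is correct, and the route is genuinely different from (and somewhat cleaner than) the paper's. The paper splits on the parity of $\sum_i k_i - n + 1$, and in the odd case it has to work harder: from equation~(\ref{I=1}) with $q=3$ it extracts the auxiliary bound $r_{n+1}\ge 2$ (valid when $n\ge 5$), then turns this into a lower bound on $\sum_i r_i$, and only then invokes~(\ref{pq}). You instead split on $m=\#\{i:k_i=2\}$, which works because $\sum_i k_i = n+1+m$ so $m$ has the same parity as the quantity the paper splits on, and because~(\ref{cond}) kills $m=1$ (a nice observation the paper never makes explicit — its odd case implicitly includes $m\ge 3$ but the paper does not use that structural fact). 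The payoff is that for $m\ge 2$ you need only~(\ref{pq}) and $r_i\ge 0$: the term $3m$ inside $\sum_i d_i - n + 1 = 2n + 4 + 3m + \sum_i r_i$ already eats up enough of the budget $18$ to force $n\le 4$, without touching the weak Lefschetz inequalities at all. Your $m=0$ case is essentially the paper's even case (Case 2), which uses~(\ref{genIeven}) at $I=0$ — note, incidentally, that the paper cites "(\ref{I=1})" there, but the inequality actually written is~(\ref{I=0}); your proposal correctly identifies $I=0$ as the right instance. Both proofs are sound; yours unifies the $m\ge 2$ subcases and isolates the one place where the weak Lefschetz inequality is genuinely needed.
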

\begin{proof}
Assume by contradiction that $n \ge 5$.
We discuss two cases according to the parity of $\sum_{i=1}^{n+1} k_i -n +1$. 

{\bf Case 1:} Assume that $\sum_{i=1}^{n+1} k_i -n +1$ is odd. From equation (\ref{I=1}) we have
\begin{equation}\label{pis3}
3(n-2) +r_{n+1} \le r_1 + \ldots + r_ n-1 \le nr_{n+1} + n-1
\end{equation}
which implies $3(n-2) \le (n-1)(r_{n+1}+1)$ or $r_{n+1} \ge 3(n-2)/(n-1) -1$. Since $r_{n+1}$ is an integer, $n \ge 5$ implies that $r_{n+1} \ge 2$.
Now equation (\ref{pis3}) implies $r_1 + \ldots  + r_n \ge 2n-1$.

Since $k_i \ge 1$ for all $i \in \{1, \ldots, n+1\}$, equation (\ref{pq}) implies that
$$
9 \ge \lceil \frac{3\sum_{i=1}^{n+1} k_i + \sum_{i=1}^{n+1} r_i -n +1}{2} \rceil \ge \lceil \frac{3(n+1) + 2n -1 + n-1}{2} \rceil =\lceil \frac{4n+3}{2}\rceil,
$$
and this implies $4n \le 15$, therefore $n \le 4$.

{\bf Case 2:} Assume that $\sum_{i=1}^{n+1} k_i -n +1$ is even. From equation (\ref{I=1}) we have
$r_1 + \ldots + r_{n+1} + n-1 \le 3(n-1),$
therefore $r_1 + \ldots + r_{n+1} \ge 2(n-1)$. As in the discussion of the previous case, we have
$$
9 \ge \lceil \frac{3\sum_{i=1}^{n+1} k_i + \sum_{i=1}^{n+1} r_i - n +1 }{2} \rceil \ge \lceil \frac{3(n+1) +2(n-1) - n+1}{2} \rceil = \lceil \frac{4n+2}{2}\rceil,
$$
which implies that $n \le 4$ as desired.

\end{proof}

Putting the results of Lemma(\ref{largen}), Lemma(\ref{p=2}) and Lemma(\ref{p=3}) together, we obtain the following:
\begin{corollary}\label{corWLP}
Assume that $n \ge 5$ and $d_i = k_i q + r_i$ are as above with $k_i \ge 1$. If $A$ has WLP, then we must have $q=1$.
\end{corollary}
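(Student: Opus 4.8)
The plan is to combine Lemma~\ref{largen}, Lemma~\ref{p=2}, and Lemma~\ref{p=3} by a short case analysis on the value of $q$. First I would apply Lemma~\ref{largen}: under the hypothesis that $A$ has WLP and $n \ge 5$, whichever of the three ranges $n$ falls into ($n \ge 9$, $n \in \{7,8\}$, or $n \in \{5,6\}$), we conclude $q \le 4$. Since $q = p^e$ is a power of the characteristic $p$, the only prime powers that are at most $4$ are $1$, $2$, $3$, and $4$, so it remains to rule out $q \in \{2,3,4\}$.

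Next I would dispose of the cases $q = 2$ and $q = 4$: both force $p = 2$, and then Lemma~\ref{p=2} says that WLP together with $p = 2$ forces $n \le 3$, or $n = 4$ with $q = 2$ --- in either case $n \le 4$, contradicting $n \ge 5$. The case $q = 3$ forces $p = 3$ and $e = 1$, i.e.\ $p = q = 3$; then Lemma~\ref{p=3} gives $n \le 4$, again contradicting $n \ge 5$.

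Having eliminated $q = 2, 3, 4$, the only remaining possibility is $q = 1$, which is exactly the assertion of the corollary. I do not expect any genuine obstacle here: the mathematical content is entirely carried by the three lemmas, and the corollary is just their bookkeeping. The only points requiring a moment of care are verifying that $\{1,2,3,4\}$ is the complete list of prime powers $\le 4$ and that each of $2$, $3$, $4$ is indeed handled by Lemma~\ref{p=2} or Lemma~\ref{p=3}.
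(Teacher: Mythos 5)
Your proposal is correct and matches the paper's approach: the paper itself presents Corollary~\ref{corWLP} as an immediate consequence of Lemmas~\ref{largen}, \ref{p=2}, and \ref{p=3}, with precisely the same bookkeeping (use Lemma~\ref{largen} to reduce to $q\le 4$, then dispose of $q\in\{2,4\}$ via Lemma~\ref{p=2} since $p=2$, and $q=3$ via Lemma~\ref{p=3} since $p=q=3$). The only minor comment is that for $n\ge 7$ you could use the sharper bounds ($q\le 3$ or $q\le 2$) to skip a case, but taking the uniform bound $q\le 4$ is perfectly valid and a touch cleaner.
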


Now we will consider the cases $n=4$ and $n=3$.

\begin{prop}\label{propWLP}
Assume that $n=4$, $d_1,  \ldots, d_5$ are as in Theorem (\ref{main_theorem}) with $q >1$, and $A$ has WLP. Then we must have $q=p=3$ and $(d_1, \ldots, d_5)=(4, 4, 4, 4, 5)$ or any permutation of this.
\end{prop}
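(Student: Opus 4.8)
The plan is to push the WLP inequalities \eqref{genI}, \eqref{genIeven}, \eqref{pq} together with condition \eqref{cond} hard enough to cut $(p,q,d_1,\ldots,d_5)$ down to a tiny list, and then to settle each surviving case by evaluating $\mathcal{E}_p$ exactly via Theorem~\ref{main_theorem}. First I would rule out $p=2$: by Lemma~\ref{p=2}, if $p=2$ and $A$ has WLP then (as $n=4$) $q=2$, and then $k_1=\ldots=k_5=1$, so Theorem~\ref{main_theorem} gives $\mathcal{E}_2(d_1,\ldots,d_5)=\min\{2+\sum r_i,\,4\}$; but the WLP threshold $\lceil(\sum d_i-3)/2\rceil=\lceil(7+\sum r_i)/2\rceil$ exceeds this for every choice of the $r_i$, so WLP fails. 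Hence $p\ge 3$. From now on set $E=\lceil(\sum k_i-3)/2\rceil$, write $S=\sum r_i$, and order the indices so that $r_1\le\ldots\le r_5$. The argument then splits on the parity of $\sum k_i$.

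If $\sum k_i$ is even, condition \eqref{cond} forces $k_i\le E-1$ for all $i$, and together with $\sum k_i=2E+2$ this gives $E\ge 3$; from \eqref{pq} one gets $pq\ge q(E+1)+\lceil(S-3)/2\rceil$, which is incompatible with $p\le E$ (since $q\ge 3$), so $p\ge E+1$ and hence $p\ge 5$. Taking $I=3$ in \eqref{genI} gives $r_5\le r_1+r_2-r_3-r_4+3\le 3$, and taking $I=1$ gives $2q\le S-2r_5+3\le 3r_5+3\le 12$, so $q\le 6$, forcing $q=p=5$; plugging back, $S\ge 2r_5+7$ and $S\le 5r_5$ force $r_5=3$, $13\le S\le 15$, and \eqref{pq} forces $E\in\{3,4\}$. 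Now I would compute $\mathcal{E}_5(d_1,\ldots,d_5)$ from Theorem~\ref{main_theorem}, noting that because $\max(k_i+\epsilon_i)\le E$ one has $\mathcal{E}_5(k_1+\epsilon_1,\ldots,k_5+\epsilon_5)=\min\{E+\lfloor I/2\rfloor,5\}$ with $I=\sum\epsilon_i$. For $E=4$ the $\epsilon\equiv 1$ term of $\mathcal{E}_5$ equals $25$ while the threshold is at least $30$; for $E=3$ one is forced to $S=13$, and then the term with $I=3$ equals $20+(r_1+r_2)\le 24$ (since $r_1+r_2+r_3+r_4=10$ with all $r_i\le 3$ forces $r_1+r_2\le 4$), while the threshold equals $25$. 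Either way $\mathcal{E}_5<\lceil(\sum d_i-3)/2\rceil$, so this case produces no WLP examples.

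If $\sum k_i$ is odd, combining the $I=0$ and $I=n$ instances of \eqref{genIeven} by the computation used for Lemma~\ref{largen} gives $q\le 6$, so $(p,q)\in\{(3,3),(5,5)\}$, and in each subcase \eqref{pq} together with the $I=0$ instance of \eqref{genIeven} forces $E$ small. For $(p,q)=(5,5)$ one gets $E\le 2$; since WLP requires every term $5\,\mathcal{E}_5(k_1+\epsilon_1,\ldots,k_5+\epsilon_5)+\sum_{\epsilon_i=0}r_i$ to be at least the threshold, the instances $I=0$ and $I=4$ pin $S$ to $\{12,13\}$, while the instance $I=2$ forces $r_4+r_5\le\lfloor S/2\rfloor-1$, which is incompatible with $r_4+r_5\ge\lceil 2S/5\rceil$ (and with $r_1\ge 2$ coming from $I=4$), so no example arises. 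For $(p,q)=(3,3)$ one gets either $E=2,\ S=0$, giving $(d_1,\ldots,d_5)=(3,3,3,6,6)$ with $\mathcal{E}_3=6<9$ and no WLP, or $E=1$, in which case $k_1=\ldots=k_5=1$; here the instances $I=0$ and $I=5$ force $S=6$, and the instance $I=2$ forces $r_4+r_5\le 3$. The only multiset of five elements of $\{0,1,2\}$ with sum $6$ whose two largest members add to at most $3$ is $\{1,1,1,1,2\}$, i.e. $(d_1,\ldots,d_5)=(4,4,4,4,5)$ up to permutation; for this tuple $\mathcal{E}_3(4,4,4,4,5)=9=\lceil(\sum d_i-3)/2\rceil$, so WLP does hold. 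This yields exactly the asserted list, completing the proof.

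The main difficulty is that no single inequality forces $q=1$: the proof is a finite but fiddly case analysis in which one must juggle several of the WLP inequalities simultaneously and, in the handful of surviving configurations, compute $\mathcal{E}_p$ exactly and compare it term by term with the threshold. The most delicate point is the subcase $(p,q)=(5,5)$ of the odd case, where a bumped tuple $k_i+\epsilon_i$ can have maximum entry as large as $E+1$; there one must choose, for each value of $I$, an $\epsilon$ realizing the upper bound $\mathcal{E}_5\le 5\,\mathcal{E}_5(k_1+\epsilon_1,\ldots,k_5+\epsilon_5)+\sum_{\epsilon_i=0}r_i$ for which both factors are under control, and then verify that the resulting constraints on $S$ and on $r_4+r_5$ are jointly inconsistent.
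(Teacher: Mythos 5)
Your proposal is correct in outline and follows essentially the same route as the paper: split on the parity of $\sum k_i$, feed the WLP necessary conditions \eqref{genI}, \eqref{genIeven}, \eqref{pq} into the formula from Theorem~\ref{main_theorem}, and grind out a finite case analysis. The paper's Case~1 (your ``$\sum k_i$ even'') is handled there by splitting on the value of $r_5\in\{0,1,2,3\}$, whereas you first bound $E\ge 3$, then $p\ge 5$, then $q=p=5$; both work, and your organization is arguably cleaner at that step.

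There is, however, a real gap in the odd case at $(p,q)=(5,5)$, the spot you yourself flag as delicate. As stated, ``the instances $I=0$ and $I=4$ pin $S$ to $\{12,13\}$'' does not follow from what you cite: $I=0$ gives $S\ge 12$ and $I=5$ gives $S\le 18$, and the bound $r_1\ge 2$ you extract from $I=4$ is not enough. What $I=4$ actually gives is $r_1\ge \lceil S/2\rceil-4$, and only after combining that with $r_1\le S/5$ (since $r_1$ is the smallest of the five) do you force $S=12$. Even at $S=12$ the two displayed inequalities $r_4+r_5\le \lfloor S/2\rfloor-1=5$ and $r_4+r_5\ge\lceil 2S/5\rceil=5$ are \emph{compatible} (they pin $r_4+r_5=5$); the contradiction is not an inequality clash but a small pigeonhole: $r_1\ge 2$ and $r_4\ge r_1$ force $(r_4,r_5)=(2,3)$, hence $r_3\le 2$, hence $r_1+r_2+r_3\le 6<7=S-(r_4+r_5)$. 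Without that last step the case is not closed. Two smaller blemishes are harmless but worth noting: in the even case \eqref{pq} together with $S\ge 13$ actually forces $E=3$ (not $E\in\{3,4\}$), and in the $(p,q)=(3,3)$ subcase the configuration ``$E=2$, $S=0$'' is already excluded by the $I=0$ inequality $3q\le S+3$, so there is no need to compute $\mathcal{E}_3(3,3,3,6,6)$ at all.
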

\begin{proof}
We consider two cases according to the parity of $\sum_{i=1}^5 k_i-3$.

{\bf Case 1:} $\sum_{i=1}^5 k_i-3$ is odd. In order for the assumption in Theorem (\ref{main_theorem}) to hold, we must have $\sum_{i=1}^5 k_i \ge 8$.

By plugging $I=1, 3, 5$ in equation (\ref{genI}) we have:
\begin{equation}\label{eq1}
2q+r_5 \le r_1+r_2+r_3+r_4 +3
\end{equation}
\begin{equation}\label{eq2}
r_3+r_4+r_5\le r_1 + r_2 + 3 
\end{equation}
\begin{equation}\label{eq3}
-2q+r_1 + r_2+ r_3 +r_4 +r_5 \le 3
\end{equation}
Since $r_1\le r_2\le \ldots \le r_5$, equation (\ref{eq2}) implies $r_5 \le 3$, and therefore $r_1, \ldots, r_5 \le 3$. Using equations (\ref{eq1}) and (\ref{eq3}), we have

\begin{equation}\label{comp}
 r_1 + r_2 + r_3 + r_4 -(3-r_5) \le 2q \le r_1 + r_2 + r_3 + r_4 +(3-r_5)
\end{equation}
The argument will be based on considering the possible values of $r_5$.
Assume that $r_5=3$. Then equation (\ref{eq2}) implies that $r_1 =r_2=r_3=r_4:=r \le 3$ and equation (\ref{comp}) implies that $q=2r$. Therefore $q$ is even, so $p=2$.
Furthermore, since $k_i \ge 1$ for all $i \in \{1, \ldots, 5\}$, we have from 
 equation (\ref{pq}):
$$
2q\ge \lceil \frac{q\sum_{i=1}^{5} k_i + \sum_{i=1}^{5} r_i -3 }{2} \rceil \ge \lceil \frac{5q+4r}{2}\rceil = \lceil \frac{7q}{2}\rceil.
$$
This is not possible.

Assume that $r_5=2$. Equation (\ref{eq1}) implies that $2q \le r_1 + r_2 + r_3 + r_4 +1 \le 9$, thus $q \le 4$ and $p \le 3$.
 Since $k_i \ge 1$ for all $i \in \{1, \ldots, 5\}$ and we are assuming that $\sum_{i=1}^5 k_i$ is even, equation (\ref{pq}) implies
$$
pq \ge \lceil \frac{6q + \sum_{i=1}^5 r_i -3}{2}\rceil \ge \lceil \frac{6q + 2q-2}{2}\rceil =4q -1
$$
(for the last inequality, we used the fact that $\sum_{i=1}^5 r_i \ge 2q+1$, which follows from equation (\ref{eq1})).
Since $p \le 3$, this implies that $q=1$.

Now assume $r_5=1$.Then equation (\ref{eq1}) implies $2q\le 6$, so $q\le 3$. From equation (\ref{eq1}) we have $r_1 + \ldots + r_5 \ge 2q -1$. Moreover, we have $\sum_{i=1}^5 k_i \ge 6$, and equation (\ref{pq}) implies that
$$
pq \ge \lceil \frac{6q + 2q-4}{2}\rceil =4q-2
$$
Since $p\le 3$, this implies $q=1$.

If $r_5=0$ then equation (\ref{eq1}) implies $2q\le 3$, which implies $q=1$.

{\bf Case 2:} $\sum_{i=1}^5 k_i -3$ is even. By plugging $I=0, 2, 4$ in equation (\ref{genIeven}) we obtain
\begin{equation}\label{eqq1}
3q \le r_1 + \ldots + r_5 + 3,
\end{equation}
\begin{equation}\label{eqq2}
q+r_4 + r_5 \le r_1+r_2 + r_3 + 3 , \ \mathrm{and}
\end{equation}
\begin{equation}\label{eqq3}
-q + r_2 + r_3 + r_4 + r_5 \le r_1 + 3
\end{equation}
Equation (\ref{eqq2}) implies that $q \le r_1 + 3$. Combining this with equation (\ref{eqq3}) we get
$r_1 + 3 \ge q \ge r_2 + r_3 + r_4 + r_5 -r_1 - 3\ge 2(r_2 + r_3) -r_1 -3$, and thus $r_1 + 3 \ge r_2 + r_3$, which implies $ r_3 \le 3$, and thus
$q\le 6$, which implies $p\le 5$.
Using the fact that $\sum_{i=1}^5 k_i \ge 5$ and $\sum_{i=1}^5 r_i \ge 3q-3$ (from equation (\ref{eqq1})), we see that equation (\ref{pq}) implies 
$$
pq \ge \lceil \frac{5q + 3q-6}{2}\rceil=4q-3
$$
This is possible when $p=q=3$ or $p=q=5$.

Assume $p=q=3$, and thus $r_1, \ldots, r_5 \le q-1=2$. Assume $r_1=2$. Then $r_2 = \ldots = r_5 =2$ and we see that the inequality (\ref{pq}) fails since $\sum_{i=1}^5 k_i \ge 5$.
Assume $r_1 =1$. From equation (\ref{eqq1}) we have $r_2 + r_3 + r_4 +r_5 \ge 5$, from equation (\ref{eqq2}) we have $r_4 + r_5 \le r_2 + r_3 + 1$, and from equation (\ref{eqq3}) we have $r_2 + r_3 + r_4 + r_5 \le 7$. The only values that satisfy all these conditions are $(r_1, r_2, r_3, r_4, r_5)=(1, 1, 1, 1, 2)$ or a permuation of the above (in which case all the required equations do hold).
Moreover, we must have $k_1 = \ldots = k_5=1$, because otherwise $\sum_{i=1}^5 k_i \ge 7$, and equation (\ref{pq}) fails. Finally, one can use Theorem (\ref{main_theorem}) to check that $\mathcal{E}_3(4, 4, 4, 4, 5)=9=\lceil \frac{4+4+4+4+5-3}{2}\rceil $, and therefore $A$ has WLP for $(d_1, \ldots, d_5)=(4, 4, 4, 4, 5)$ or any permuation of this.
 Assume $r_1 =0$. From equations (\ref{eqq1}) and (\ref{eqq3}), we must have $r_2 + r_3+ r_4 + r_5=6$, and from equation (\ref{eqq2}) we have $r_4 + r_5 \le r_2 + r_3$, which implies that $r_2=r_3=r_4=r_5$. These two conditions cannot hold simultaneusly, since $r_2, \ldots, r_5$ are integers.

Assume $p=q=5$. Then equations (\ref{eqq1}), (\ref{eqq2}) and (\ref{eqq3}) become
$$
r_1 + \ldots + r_5 \ge 12,  \ r_4 + r_5 +2\le r_1 + r_2 + r_3, \  r_2 + r_3 + r_4 + r_5 \le r_1 + 8.
$$ 
Combining the first and the last inequalities, we get $12-r_1 \le r_1 + 8$, or $r_1 \ge 2$. Moreover, we have $4r_1 \le r_2 + r_3 + r_4 + r_5 \le r_1 +8$, which implies $3r_1 \le 8$, and therefore $r_1 =2$ is the only possiblity. Then the second inequality above implies that $r_2 = r_3 =r_4 = r_5:=r$, and the other two inequalities combined imply $4r=10$, which is not possible.

 \end{proof}
\begin{prop}\label{prop2WLP}
Assume that $n=3$ and $d_1, d_2, d_3, d_4$ are as in Theorem (\ref{main_theorem}) with $q>1$ and $r_1 \le r_2 \le r_3 \le r_4$. Then $A$ has WLP if and only if one of the following holds:
\begin{itemize}
\item $\sum_{i=1}^4 k_i$ is odd, $r_1 + r_2 + r_3 - r_4 + 2 \ge q \ge r_2 + r_3 + r_4 - r_1 -2$, and $\displaystyle pq \ge \lceil \frac{\sum_{i=1}^4 d_i -2}{2}\rceil $.

\item $\sum_{i=1}^4 k_i$ is even, $2q-2 \le r_1 + r_2 + r_3 + r_4 \le 2q+2$, $r_3 + r_4 \le r_1 + r_2 + 2$, and $\displaystyle pq \ge \lceil \frac{\sum_{i=1}^4 d_i -2}{2}\rceil $.

\end{itemize}
\end{prop}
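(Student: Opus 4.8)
The plan is to translate the weak Lefschetz property into the family of linear inequalities in $q$ and the $r_i$ that was recorded at the start of this section as (\ref{genI}) (in the case $\sum_{i=1}^4 k_i$ odd) and (\ref{genIeven}) (in the case $\sum_{i=1}^4 k_i$ even), and then to show that for $n=3$ this whole family collapses onto the two conditions stated in the proposition.

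First I would invoke Corollary 2.2 of \cite{KV}: the ring $A$ has WLP if and only if $\mathcal{E}_p(d_1,\dots,d_4)\ge\lceil(\sum_{i=1}^4 d_i-2)/2\rceil$. Next, by Theorem (\ref{main_theorem}) together with the discussion following Lemma (\ref{char0}) (equations (\ref{case1})--(\ref{case2})), one has
$$\mathcal{E}_p(d_1,\dots,d_4)=\min_{0\le I\le 4}\Bigl(\min\bigl(E+\delta(I),\,p\bigr)\,q+\rho(I)\Bigr),$$
where $E=\lceil(\sum_{i=1}^4 k_i-2)/2\rceil$, where $\delta(I)=\lfloor I/2\rfloor$ when $\sum k_i$ is odd and $\delta(I)=\lceil I/2\rceil$ when $\sum k_i$ is even, and where $\rho(I)=r_1+\dots+r_{4-I}$. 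The formula for $\rho(I)$ holds because $\mathcal{E}_p(k_1+\epsilon_1,\dots,k_4+\epsilon_4)$ depends only on $I=\sum\epsilon_i$ (by (\ref{case1})--(\ref{case2})), so for a fixed $I$ the quantity $\sum_{\epsilon_i=0}r_i$ is smallest when $\epsilon_i=1$ is assigned to the largest of the $r_i$, which is legitimate since $r_1\le r_2\le r_3\le r_4$. Using $\min(A,B)\ge C\iff A\ge C$ and $B\ge C$, together with $\min(A,B)\,q+\rho(I)=\min\bigl(Aq+\rho(I),\,Bq+\rho(I)\bigr)$, the WLP inequality becomes the conjunction of ``$(E+\delta(I))q+\rho(I)\ge\lceil(\sum d_i-2)/2\rceil$ for every $I\in\{0,\dots,4\}$'' -- which is precisely the family (\ref{genI}), resp.\ (\ref{genIeven}) -- and ``$pq+\rho(I)\ge\lceil(\sum d_i-2)/2\rceil$ for every $I$''. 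Since $\rho$ is nonincreasing, the second family is equivalent to its $I=4$ member, namely $pq\ge\lceil(\sum d_i-2)/2\rceil$, which is the third condition of each bullet.

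It then remains to show that the first family reduces to the remaining conditions. In the $\sum k_i$ odd case the $I=1$ and $I=3$ members of (\ref{genI}) are exactly $q\le r_1+r_2+r_3-r_4+2$ and $q\ge r_2+r_3+r_4-r_1-2$; the $I=0$ member follows from $I=1$ (replace $-r_4$ by $r_4$), the $I=2$ member follows from $I=3$ (drop the nonnegative $r_2$), and the $I=4$ member $\sum r_i\le 3q+2$ follows from the $I=3$ member combined with $r_1\le q-1$ (these give $r_2+r_3+r_4\le q+r_1+2\le 2q+1$, hence $\sum r_i\le 3q$). In the $\sum k_i$ even case the $I=0$, $I=2$ and $I=4$ members of (\ref{genIeven}) read $\sum r_i\ge 2q-2$, $\ r_3+r_4\le r_1+r_2+2$, and $\sum r_i\le 2q+2$, and the $I=1$ and $I=3$ members follow from these by dropping a nonnegative $r_i$. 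Reading each of these implications in the reverse direction gives the converse, so the equivalence is established.

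The only genuinely conceptual step is the reduction ``WLP $\iff$ a finite list of linear inequalities'', which is immediate once the minimum-of-a-$\min$ is split as above; the external inputs are just Theorem (\ref{main_theorem}) and the $\mathcal{E}_p$-characterization of WLP from \cite{KV}. The part most likely to cause trouble is the bookkeeping in the previous paragraph: one has to match each $I\in\{0,\dots,4\}$ to the correct specialization of (\ref{genI})/(\ref{genIeven}), keep the floors and ceilings straight, and run the two parity cases separately -- tedious, but not a real obstacle.
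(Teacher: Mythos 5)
Your proof is correct and follows essentially the same route as the paper's: both invoke Corollary 2.2 of \cite{KV} to translate WLP into the inequality $\mathcal{E}_p(d_1,\dots,d_4)\ge\lceil(\sum d_i-2)/2\rceil$, both apply Theorem~\ref{main_theorem} together with (\ref{case1})--(\ref{case2}) to write $\mathcal{E}_p$ as a minimum over $I$, and both then reduce the resulting family of linear inequalities to the stated conditions. The paper's write-up is marginally more compact (it drops the redundant $I$-values from the minimum before comparing), but the bookkeeping you carry out is the same content in a slightly different order.
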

\begin{proof}

{\bf Case 1:} $\sum_{i=1}^4 k_i -2$ is odd. Assume that $A$ has WLP.
Plugging $I=1, 3$ in equation (\ref{genI}), we get
\begin{equation}\label{eqqq1}
q + r_4 \le r_1 + r_2 + r_3 + 2 \ \mathrm{and}
\end{equation}
\begin{equation}\label{eqqq2}
-q + r_2 + r_3 + r_4 \le r_1 + 2
\end{equation}
The last condition in the statement is equation (\ref{pq}).

Thus shows that the conditions in the statement are necessary for $A$ to have WLP. Now we need to see that these conditions are also sufficient. From Theorem (\ref{main_theorem}), we have
$$\mathcal{E}_p(d_1, d_2, d_3, d_4)=\mathrm{min}\{ \frac{\sum_{i=1}^4 k_i -1}{2} q + r_1 + r_2 + r_3 , \frac{\sum_{i=1}^4 k_i + 1}{2} q + r_1, pq\}.
$$
We need to check that $\mathcal{E}_p(d_1, d_2, d_3, d_4)\ge \lceil \frac{\sum_{i=1}^4 d_i -2}{2} \rceil$. In other words we need to check that each of the following inequalities holds:
\begin{equation}\label{in1}
\frac{\sum_{i=1}^4 k_i -1}{2} q + r_1 + r_2 + r_3 \ge  \lceil \frac{\sum_{i=1}^4 d_i -2}{2} \rceil
\end{equation}
\begin{equation}\label{in2}
 \frac{\sum_{i=1}^4 k_i + 1}{2} q + r_1 \ge  \lceil \frac{\sum_{i=1}^4 d_i -2}{2} \rceil
\end{equation}
\begin{equation}\label{in3}
pq\ge \lceil \frac{\sum_{i=1}^4 d_i -2}{2} \rceil
\end{equation}
The inequality (\ref{in1}) is equivalent to $q \le r_1 + r_2 + r_3 - r_4 +2$.
The inequality (\ref{in2}) is equilvalent to $q \ge r_2 + r_3 + r_4 -r_1 -2$.
The inequality (\ref{in3}) is part of the assumption.

{\bf Case 2:} $\sum_{i=1}^4 k_i -2$ is even. Assume that $A$ has WLP. We get the first two conditions in the statement by plugging $I=0$, $I=2$, $I=4$ in equation (\ref{genIeven}), and the last condition is equation (\ref{pq}). Now we wish to see that the conditions in the statement are also sufficient for $A$ to have WLP. 
From Theorem (\ref{main_theorem}), we have
$$\mathcal{E}_p(d_1, d_2, d_3, d_4)=\mathrm{min}\{\frac{\sum_{i=1}^4 k_i -2}{2} q + r_1 + r_2 + r_3 + r_4, \frac{\sum_{i=1}^4 k_i}{2}q + r_1 + r_2, \frac{\sum_{i=1}^4 k_i + 2}{2}q, pq\}
$$
We need to check that $\mathcal{E}_p(d_1, d_2, d_3, d_4)\ge \lceil \frac{\sum_{i=1}^4 d_i -2}{2} \rceil$. In other words we need to check that each of the following inequalities holds:
\begin{equation}\label{inn1}
\frac{\sum_{i=1}^4 k_i -2}{2} q + r_1 + r_2 + r_3+r_4 \ge  \lceil \frac{\sum_{i=1}^4 d_i -2}{2} \rceil
\end{equation}
\begin{equation}\label{inn2}
 \frac{\sum_{i=1}^4 k_i}{2} q + r_1+r_2 \ge  \lceil \frac{\sum_{i=1}^4 d_i -2}{2} \rceil
\end{equation}
\begin{equation}\label{inn3}
 \frac{\sum_{i=1}^4 k_i+2}{2}q\ge  \lceil \frac{\sum_{i=1}^4 d_i -2}{2} \rceil
\end{equation}
\begin{equation}\label{inn4}
pq\ge \lceil \frac{\sum_{i=1}^4 d_i -2}{2} \rceil
\end{equation}
Equation (\ref{inn1}) is equivalent to $r_1 + r_2 + r_3 + r_4 \ge 2q-2$. Equation (\ref{inn2}) is equivalent to $r_3 + r_4 \le r_1 + r_2 + 2$. Equation (\ref{inn3}) is equivalent to $r_1 + r_2 + r_3 + r_4  \le 2q+2$, and equation (\ref{inn4}) is part of the assumption.
\end{proof}

\end{document}